\title{Axiomatic $S^1$ Morse-Bott theory}
\author{Michael Hutchings\footnote{Partially supported by NSF grant DMS-1406312 and a Simons Fellowship.}\;  and Jo Nelson\footnote{Partially supported by NSF grant DMS-1303903.}\;}
\date{}
\definecolor{indigo}{RGB}{51,0,102}
\definecolor{brightpurple}{RGB}{102,0,153}
\definecolor{fuchsia}{RGB}{180,51,180}
\definecolor{jolightpurple}{RGB}{188,171,240}
\newcommand{\mc}[1]{{\mathcal #1}}
\numberwithin{equation}{section}
\newtheorem{theorem}{Theorem}[section]
\newtheorem{proposition}[theorem]{Proposition}
\newtheorem{lemma}[theorem]{Lemma}
\newtheorem{lemma-definition}[theorem]{Lemma-Definition}
\theoremstyle{definition}
\newtheorem{definition}[theorem]{Definition}
\newtheorem{remark}[theorem]{Remark}
\newtheorem{example}[theorem]{Example}
\newtheorem{convention}[theorem]{Convention}
\newcommand{\R}{{\mathbb R}}
\newcommand{\N}{{\mathbb N}}
\newcommand{\Z}{{\mathbb Z}}
\newcommand{\op}{\operatorname}
\newcommand{\M}{\mc{M}}
\newcommand{\Ker}{\op{Ker}}
\newcommand{\tensor}{\otimes}
\newcommand{\bpm}{\begin{pmatrix}}
\newcommand{\epm}{\end{pmatrix}}
\newcommand{\J}{\mathbb{J}}
\newcommand{\ca}{{\mbox{\lightning}}}
\begin{document}

\maketitle

\begin{abstract}
In various situations in Floer theory, one extracts homological invariants from ``Morse-Bott'' data in which the ``critical set'' is a union of manifolds, and the moduli spaces of ``flow lines'' have evaluation maps taking values in the critical set. This requires a mix of analytic arguments (establishing properties of the moduli spaces and evaluation maps) and formal arguments (defining or computing invariants from the analytic data). The goal of this paper is to isolate the formal arguments, in the case when the critical set is a union of circles. Namely, we state axioms for moduli spaces and evaluation maps (encoding a minimal amount of analytical information that one needs to verify in any given Floer-theoretic situation), and using these axioms we define homological invariants. More precisely, we define a (almost) category of ``Morse-Bott systems''. We construct a ``cascade homology'' functor on this category, based on ideas of Bourgeois and Frauenfelder, which is ``homotopy invariant''.  This machinery is used in our work on cylindrical contact homology.
\end{abstract}

\tableofcontents

\bigskip



\section{Introduction}

There are now many versions of Floer theory, which are used to define topological invariants of various kinds of objects, such as symplectomorphisms, pairs of Lagrangian submanifolds of a symplectic manifold, contact manifolds, smooth three-manifolds, etc. In the most basic versions of Floer theory, given an object, usually together with a generic choice of certain auxiliary data, one obtains a discrete set of ``critical points'', and for each pair of critical points a moduli space of ``flow lines'' between them. The invariant is then obtained as the homology of a chain complex which is generated by the critical points and whose differential counts flow lines, analogously to classical Morse homology. The proof that the differential has square zero involves gluing two flow lines when the lower limit of the first flow line and the upper limit of the second flow line are at the same critical point.

In some less well-behaved Floer theoretic situations, instead of a discrete set of critical points, one obtains a union of ``critical submanifolds'', analogous to the critical set of a Morse-Bott function on a finite-dimensional manifold. In this case, given two critical submanifolds, there is still a moduli space of flow lines between them. Now there are also upper and lower evaluation maps from the moduli space of flow lines to the two critical submanifolds (or more generally, certain manifolds associated to them). Two flow lines can be glued only if the lower evaluation map on the first flow line agrees with the upper evaluation map on the second flow line. The definition of homological invariants in this situation is more complicated and combines analytical arguments (establishing properties of the moduli spaces and evaluation maps) with formal arguments (extracting invariants from the analytic data).

The goal of this paper is to isolate the formal arguments needed to define homological invariants in such Morse-Bott situations, in the special case when the critical submanifolds (more precisely the manifolds associated to them) are circles. In particular, we state axioms for a ``Morse-Bott system'', and given a Morse-Bott system, we define its ``cascade homology''. We also define a notion of ``morphism'' between Morse-Bott systems, which almost makes Morse-Bott systems into a category. (The reason for the word ``almost'' is that two morphisms, such that the target of the first morphism equals the source of the second, are composable only under certain transversality conditions.) Given a morphism, we define an induced map on cascade homology. Finally, we show that the induced maps are functorial, and invariant under ``homotopies'' of morphisms. The result is a blueprint for defining Floer-theoretic invariants, by analytically establishing various axioms and then invoking the formal machinery of this paper. We now describe this in more detail.

\subsection{Summary of results}

The precise definition of ``Morse-Bott system'' is given in \S\ref{sec:defmbs}. Some key features are the following. A Morse-Bott system includes a set $X$; one can think of each element of $S$ as referring to a ``critical submanifold''. For each $x\in X$, there is an associated closed connected oriented 1-manifold $S(x)$. Given distinct elements $x_+,x_-\in X$, and given an integer $d\in\{0,1,2,3\}$, there is a moduli space $M_d(x_+,x_-)$ of ``flow lines'' from $x_+$ to $x_-$, which is a smooth $d$-dimensional manifold. (In many cases, given $x_+$ and $x_-$, there is only one value of $d$ for which this moduli space can be nonempty. One could also consider moduli spaces for $d>3$, but these are not relevant for our story.) There are smooth ``evaluation maps''
\[
\begin{split}
e_+: M_d(x_+,x_-) &\longrightarrow S(x_+),\\
e_-: M_d(x_+,x_-) &\longrightarrow S(x_-).
\end{split}
\]
Also, for each $x\in X$ there is a local system $\mc{O}_x$ on $S(x)$ locally isomorphic to $\Z$, and there is an orientation of $M_d(x_+,x_-)$ with values in $e_+^*\mc{O}_{x_+}\tensor e_-^*\mc{O}_{x_-}$.

The above data are required to satisfy various axioms. Most importantly, there is a ``compactification'' axiom which asserts that $M_1(x_+,x_-)$, as well as $M_2(x_+,x_-)$ with a generic point constraint on $e_+$ or $e_-$, or $M_3(x_+,x_-)$ with generic point constraints on both $e_+$ and $e_-$, has a compactification to a compact topological $1$-manifold, whose boundary is explicitly described in terms of fiber products of moduli spaces.

To extract a homological invariant out of this structure, we use the ``cascade'' approach. Cascades were introduced by Bourgeois \cite{bourgeois}, and discovered independently\footnote{Some related ideas appeared earlier in \cite{cfhw,pss}. In the work of Bourgeois, the emphasis is on describing, in Morse-Bott terms, what one would obtain after perturbing to a nondegenerate (non-Morse-Bott) situation. By contrast, in the work of Frauenfelder, the idea is to define invariants and prove invariance entirely in the Morse-Bott world. This is closer to our philosophy, since in our main examples of interest in contact homology, there is no apparent way to perturb to a non-Morse-Bott situation. See Example~\ref{ex:nch}.} by Frauenfelder \cite{frauenfelder}. The original idea would be to choose a generic auxiliary Morse function $f_x$ on each manifold $S(x)$, and define a chain complex over $\Z$ which is generated by pairs $(x,p)$ where $p$ is a critical point of $f_x$. The chain complex differential counts cascades, which are alternating sequences of gradient flow lines of the Morse functions $f_x$ and elements of the moduli spaces $M_d(x_+,x_-)$.

In our situation where each $S(x)$ is a circle, we use a streamlined version of this construction, following \cite{bee}, in which one chooses only one base point $p_x$ on each circle $S(x)$. (One can think of this as a limit in which the critical points of $f_x$ all approach $p_x$.) The chain complex has two generators $\widehat{x}$ and $\widecheck{x}$ for each $x\in X$. (One can think of these as a maximum and minimum respectively of $f_x$, which have collided at $p_x$.) If $x_+$ and $x_-$ are distinct elements of $X$, a cascade from $\widehat{x}_+$ or $\widecheck{x}_+$ to $\widehat{x}_+$ or $\widehat{x}_-$ consists of a sequence $(u_1,\ldots,u_k)$ for some positive integer $k$, such that there are distinct $x_0,\ldots,x_k\in X$ with $x_+=x_0$, $x_-=x_k$, and $u_i\in M_{d_i}(x_{i-1},x_i)$. For $i=1,\ldots,k-1$, we require that the points $p_{x_i}$, $e_-(u_i)$, and $e_+(u_{i+1})$ on $S(x_i)$ are distinct and positively cyclically ordered with respect to the orientation of $S(x_i)$. If we are starting from $\widecheck{x}_+$, then we also impose the point constraint $e_+(u_1)=p_{x_+}$; and if we are ending at $\widehat{x}_-$, then we also impose the point constraint $e_-(u_k)=p_{x_-}$. The differential coefficient counts such cascades where the total moduli space dimension is the number of point constraints. When $x_+=x_-$, all differential coefficients are defined to be zero, except that our orientation conventions in \S\ref{sec:cmsd} require that the differential coefficient
\begin{equation}
\label{eqn:minustwo}
\big\langle \partial\widehat{x},\widecheck{x}\big\rangle = \left\{\begin{array}{cl} 0, & \mbox{if $\mc{O}_x$ is trivial},\\ -2, & \mbox{if $\mc{O}_x$ is nontrivial}. \end{array}\right.
\end{equation}

The results in this paper can now be summarized as follows.

\begin{theorem}
\label{thm:main}
\begin{description}
\item{(a)} Let $A$ be a Morse-Bott system (see Definition~\ref{def:mbs}). Then the cascade homology $H_*^\ca(A)$ (see Definition~\ref{def:hca}) is well-defined, independently of the choice of base points.
\item{(b)} Let $\Phi$ be a morphism of Morse-Bott systems from $A_1$ to $A_2$ (see Definition~\ref{def:morphism}). Then:
\begin{description}
\item{(i)}
The induced map on cascade homology
\[
\Phi_*: H_*^\ca(A_1) \longrightarrow H_*^\ca(A_2)
\]
(see Definition~\ref{def:phistar}) is well-defined independently of choices.
\item{(ii)} If $A_1=A_2$ and $\Phi$ is the identity morphism (see Example~\ref{ex:identity}), then $\Phi_*$ is the identity map on cascade homology.
\item{(iii)} If $\Psi$ is a morphism from $A_2$ to $A_3$, and if $\Phi$ and $\Psi$ are composable (see Definition~\ref{def:composable}), then the composition $\Psi\circ\Phi$ (see Definition~\ref{def:composition}) satisfies
\[
(\Psi\circ\Phi)_* = \Psi_* \circ \Phi_*: H_*^\ca(A_1) \longrightarrow H_*^\ca(A_3).
\]
\item{(iv)}
If $\Phi'$ is another morphism from $A_1$ to $A_2$ which is homotopic to $\Phi$ (see Definition~\ref{def:homotopy}), then
\[
\Phi_* = (\Phi')_*: H_*^\ca(A_1) \longrightarrow H_*^\ca(A_2).
\]
\end{description}
\end{description}
\end{theorem}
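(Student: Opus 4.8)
The plan is to prove every part of Theorem~\ref{thm:main} by the standard device of analyzing compactified moduli spaces of cascades of dimension $\le 1$. For each algebraic assertion — that $\partial^2=0$, that a certain map is a chain map, that two chain maps are chain homotopic, that two differentials are chain homotopy equivalent — one introduces an appropriate moduli space of cascades, possibly decorated with a ``morphism flow line'', a ``homotopy flow line'', or a path of base points, arranges that the version cut down by the point constraints is one-dimensional, and reads off the assertion from the fact that a compact oriented $1$-manifold has signed boundary count zero. The only genuinely analytic input is the compactification axiom of Definition~\ref{def:mbs}, which identifies the boundary of $M_1(x_+,x_-)$, of $M_2$ with one point constraint, and of $M_3$ with two point constraints as a union of fiber products of moduli spaces; everything else is gluing combinatorics and orientation bookkeeping, carried out once and for all.

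For part (a), I would first prove $\partial^2 = 0$. Fix base points and, for distinct $x_\pm$, consider the moduli space of cascades from $\widehat{x}_+$ or $\widecheck{x}_+$ to $\widehat{x}_-$ or $\widecheck{x}_-$ whose total moduli dimension is one more than the number of point constraints. One shows this is a $1$-manifold whose ends are of three kinds: (1) the cascade breaks at a new intermediate element of $X$ — these are the nontrivial terms of $\partial\circ\partial$; (2) at some interior circle $S(x_i)$, two of the three marked points $p_{x_i}$, $e_-(u_i)$, $e_+(u_{i+1})$ collide — these either pair up and cancel or produce the terms of $\partial\circ\partial$ factoring through $\widehat{x}_i$ or $\widecheck{x}_i$, including the coefficient in equation \eqref{eqn:minustwo}; and (3) some $M_{d_i}(x_{i-1},x_i)$ hits its own boundary, which the compactification axiom re-expresses as a fiber product, i.e.\ again as an end of type (1) or (2). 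Summing signed counts and invoking the orientation conventions of \S\ref{sec:cmsd} gives $\partial^2=0$. Base-point independence is then a chain-homotopy argument: given two base points on some $S(x)$, one counts cascades using a path of base points between them, and the $1$-dimensional such moduli spaces furnish a chain homotopy between the two differentials; equivalently, one applies the identity-morphism construction of (b) with different base points on source and target.

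For part (b): the map $\Phi_*$ is defined in Definition~\ref{def:phistar} by counting rigid cascades containing exactly one morphism flow line of $\Phi$; that it is a chain map, $\partial^{(2)}\circ\Phi = \Phi\circ\partial^{(1)}$ on chains, is the $1$-dimensional case of the same boundary analysis, the new ends being a pure $A_1$-cascade followed by a $\Phi$-cascade, or a $\Phi$-cascade followed by a pure $A_2$-cascade. For (iv), a homotopy of morphisms (Definition~\ref{def:homotopy}) supplies, after cutting dimension by one, cascades with one homotopy flow line; the $1$-dimensional moduli space of these has boundary consisting of $\Phi$-cascades, $\Phi'$-cascades, and broken configurations, giving a chain homotopy $\Phi - \Phi' = \partial K + K\partial$. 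Part (i), independence of $\Phi_*$ of the auxiliary choices entering its definition, follows from (iv) — any two choices are connected by a homotopy — together with base-point independence from (a). Part (iii), functoriality, is proved by a ``stretching'' argument: for composable $\Phi,\Psi$ one degenerates, in a one-parameter family of auxiliary data, a cascade of $\Psi\circ\Phi$-type, obtaining a $1$-dimensional moduli space one of whose ends computes $(\Psi\circ\Phi)_*$ and the other of whose ends consists of cascades with a $\Phi$-flow line followed by a $\Psi$-flow line, which compute $\Psi_*\circ\Phi_*$; the composability hypothesis (Definition~\ref{def:composable}) is exactly what ensures the relevant fiber products are transverse so that this $1$-manifold, and the requisite gluing, exist. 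Part (ii) is a direct count: for the identity morphism of Example~\ref{ex:identity} the only rigid cascade is the trivial one, so $\Phi_*$ is the identity on the nose.

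The principal obstacle, and the reason this formal machinery warrants a separate treatment, is the orientation bookkeeping with values in the local systems $\mc{O}_x$, together with the interior cyclic-order degenerations of type (2), which have no analogue in a standard Floer theory with a discrete critical set. Here the orientation of $M_d(x_+,x_-)$ lives in $e_+^*\mc{O}_{x_+}\tensor e_-^*\mc{O}_{x_-}$, the generators $\widehat{x}$ and $\widecheck{x}$ must be assigned coherent orientation data, and the point constraints at the $p_{x_i}$ interact with the monodromy of $\mc{O}_{x_i}$; reconciling all of this is precisely what forces the coefficient $-2$ in equation \eqref{eqn:minustwo}, and it is where nearly all of the real work lies. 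Once the orientation conventions of \S\ref{sec:cmsd} are pinned down, the four assertions of the theorem emerge uniformly from the boundary-counting arguments sketched above.
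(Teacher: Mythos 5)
Your overall strategy---compactified one-dimensional cascade moduli spaces giving $\partial^2=0$, the chain map property of $\Phi_\sharp$, and a chain homotopy from a homotopy of morphisms---is the paper's strategy (Propositions~\ref{prop:cascadekey}, \ref{prop:functoriality}, \ref{prop:homotopyinvariance}). But two of your steps fail as stated. For (b.ii), the identity morphism of Example~\ref{ex:identity} has $\Phi_1(x,x)=S(x)$ and $\Phi_d(x_1,x_2)=\R\times M_{d-1}(x_1,x_2)$ for $x_1\neq x_2$, and to define $\Phi_\sharp$ one must use a \emph{generic pair} of base point sets, so one cannot take the same base points on both sides; the resulting chain map has the form $I+B$, where $B$ counts cascades through the $S(x)$-factor followed by rigid flow lines of $A$ and is in general nonzero (this is Lemma~\ref{lem:phiso}). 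So it is false that ``the only rigid cascade is the trivial one'' and that $\Phi_\sharp$ is the identity on the nose. In the paper, (b.ii) is true essentially by definition: $H_*^\ca(A)$ is obtained by identifying the groups $H_*^\ca(A,\mc{P})$ via precisely these identity-induced maps $\phi_{\mc{P}',\mc{P}}$ (Proposition~\ref{prop:basepoint}), whose isomorphism property is proved by inverting $I+B$ with a Neumann series using the Finiteness axiom, and whose coherence $\phi_{\mc{P}'',\mc{P}}=\phi_{\mc{P}'',\mc{P}'}\circ\phi_{\mc{P}',\mc{P}}$, $\phi_{\mc{P},\mc{P}}=\op{id}$ requires a functoriality argument for (near-)compositions with the identity, which is delicate because the identity is not an honest unit (Remark~\ref{rem:identity}, Lemma~\ref{lem:bp}). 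Your sketch addresses neither the isomorphism property nor this coherence, and these are exactly what make (a), (b.i), (b.ii) well posed.

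For (b.iii) you propose to ``degenerate, in a one-parameter family of auxiliary data'' and invoke gluing. In this axiomatic setting there is no auxiliary data to vary and no gluing theorem: the only inputs are the moduli spaces, evaluation maps, and the stated compactification axioms, and $\Psi\circ\Phi$ is \emph{defined} as a fiber product (Definition~\ref{def:composition}). The paper's proof of Proposition~\ref{prop:functoriality} is purely formal: one forms the ``almost'' Morse-Bott system $A_1\sqcup_{\underline{\Phi}}\underline{A}_2\sqcup_\Psi A_3$ with all $X_1\to X_3$ moduli spaces empty, notes that the Compactification axiom fails only there, computes the resulting failure of $\partial^2=0$ (the correction term $Z$ of \eqref{eqn:defZ}, which after the conjugation and $\tau$ sign bookkeeping equals $-(\Psi\circ\Phi)_\sharp$), and reads off the $X_1\to X_3$ block of the cascade differential as the chain homotopy between $\Psi_\sharp\circ\Phi_\sharp$ and $(\Psi\circ\Phi)_\sharp$. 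The ends you describe as ``computing $(\Psi\circ\Phi)_*$'' arise not from stretching but because the fiber product $\Phi_{d_1}\times_{S_2(x_2)}\Psi_{d_2}$ appears as extra limit points of the compactified hybrid cascade moduli space when the two evaluation points on the intermediate circle collide; this is why the paper's compactifications allow ``extra points'' and why composability (Definition~\ref{def:composable}) imposes exactly the transversality and emptiness conditions it does. Without recasting your argument in these terms (and carrying out the conjugation/orientation bookkeeping of \S\ref{sec:conjugate}, which you defer), parts (ii) and (iii) do not go through as written, and the base-point coherence needed for (a) and (b.i) is missing.
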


To use this theorem to define Floer-theoretic invariants of some class of objects, the procedure is as follows: (1) For each object, together with generic auxiliary data if necessary, define a Morse-Bott system. (2) For two different objects (with auxiliary data as needed), define a homotopy class of morphisms between the corresponding Morse-Bott systems. (3) Show that the composition of some morphism in this homotopy class with a morphism going in the other direction is homotopic to the identity.

\subsection{Examples}

The following two examples are the main examples we have in mind and the reason we are writing this paper.

\begin{example}
\label{ex:nch}
Let $Y$ be a closed odd-dimensional manifold. Let $\lambda$ be a contact form on $Y$, let $R$ denote the associated Reeb vector field, and let $\xi=\Ker(\lambda)$ denote the associated contact structure. Assume that $\lambda$ is nondegenerate and {\em hypertight\/} (meaning that there are no contractible Reeb orbits). Let $\J=\{J_t\}_{t\in S^1}$ be a generic $S^1$-family of $\lambda$-compatible\footnote{An almost complex structure $J$ on $\R\times Y$ is {\em $\lambda$-compatible\/} if $J$ sends $\xi$ to itself, such that $J$ is compatible with the linear symplectic form $d\lambda$ on $\xi$; $J$ is invariant under translation of the $\R$ factor; and $J(\partial_s)=R$, where $s$ denotes the $\R$ coordinate.} almost complex structures on $\R\times Y$. In \cite{equi19}, we associate to this data a Morse-Bott system $A(Y,\lambda;\J)$ where:
\begin{itemize}
\item
$X$ is the set of (not necessarily simple) Reeb orbits.
\item
If $x$ is a Reeb orbit, then:
\begin{itemize}
\item
$S(x)$ is the image of $x$ in $Y$, oriented via the Reeb vector field.
\item
The local system $\mc{O}_x$ comes from the theory of coherent orientations \cite{bm,fh}, and is trivial if and only if $x$ is a good\footnote{As in \cite{egh}, a Reeb orbit $x$ is {\em good\/} if $x$ is not an even-degree multiple cover of a Reeb orbit $x'$ for which the Conley Zehnder indices of $x$ and $x'$ have opposite parity.} Reeb orbit.
\item
The grading $|x|$ (see Definition~\ref{def:mbs}) is the parity of the Conley-Zehnder index of $x$.
\end{itemize}
\item
If $x_+$ and $x_-$ are distinct Reeb orbits, let $\M^\J(x_+,x_-)$ denote the moduli space of $\J$-holomorphic cylinders from $x_+$ to $x_-$, i.e.\ the set of maps $u:\R\times S^1\to \R\times Y$ satisfying the equations
\begin{gather*}
\partial_s u + J_t\partial_tu=0,\\
\lim_{s\to\pm\infty}\pi_\R u(s,\cdot) = \pm\infty,\\
\lim_{s\to\pm\infty}\pi_Yu(s,\cdot) \mbox{ is a parametrization of $x_\pm$,}
\end{gather*}
modulo $\R$ translation in the domain. Let $\M_d^\J(x_+,x_-)$ denote the set of elements of $M^\J(x_+,x_-)$ with Fredholm index $d$. We then have
\[
M_d(x_+,x_-)=\M^\J_{d+1}(x_+,x_-)/\R,
\]
where $\R$ acts by translation of the $\R$ factor in the target. The compactifications of these moduli spaces are defined by adjoining ``broken holomorphic cylinders''.
\item
The evaluation map $e_\pm$ sends $u\mapsto \lim_{s\to\pm\infty}\pi_Y(u(s,0))$.
\end{itemize}

Analytic arguments in \cite{equi19} show that $A(Y,\lambda;\J)$ satisfies the axioms of a Morse-Bott system. It then follows from Theorem~\ref{thm:main}(a) that the Morse-Bott system $A(Y,\lambda;\J)$ has a well-defined cascade homology. This cascade homology is the {\em nonequivariant contact homology\/} of $(Y,\lambda,\J)$, which we denote by $NCH_*(Y,\lambda;\J)$.

To prove that nonequivariant contact homology\footnote{Nonequivariant contact homology is a contact analogue of the (Morse-Bott) Floer theory for autonomous Hamiltonians studied by Bourgeois-Oancea \cite{bo-duke}. The paper \cite{bo-duke} identified this Morse-Bott Floer theory with the Floer theory for a (non-Morse-Bott) nonautonomous perturbation of the Hamiltonian. In our contact situation we cannot make an analogous non-Morse-Bott perturbation, so if we want to prove that nonequivariant contact homology is a topological invariant, we need to work entirely within the Morse-Bott world.}
 depends only on the contact structure, we show in \cite{equi19} that if $\lambda'$ is another hypertight contact form with $\Ker(\lambda')=\xi$, and if $\J'$ is a generic $S^1$-family of $\lambda'$ almost complex structures, then there is a morphism of Morse-Bott systems (obtained by counting holomorphic cylinders in a completed symplectic cobordism) from $A(Y,\lambda;\J)$ to $A(Y,\lambda';\J')$, which is well-defined up to homotopy of Morse-Bott systems. Thus by Theorem~\ref{thm:main}(b), we obtain a canonical map
\begin{equation}
\label{eqn:NCHiso}
NCH_*(Y,\lambda;\J) \longrightarrow NCH_*(Y',\lambda';\J').
\end{equation}
Finally, we show in \cite{equi19} that the composition of one of the morphisms from $A(Y,\lambda;\J)$ to $A(Y,\lambda';\J')$ with one of the morphisms going in the other direction is homotopic to the identity. It then follows from Theorem~\ref{thm:main}(b) that the map \eqref{eqn:NCHiso} is an isomorphism. We conclude in \cite{equi19} that {\em nonequivariant contact homology is an invariant of closed contact manifolds $(Y,\xi)$ that admit nondegenerate hypertight contact forms\/}.
\end{example}

\begin{example}
If $Y$ is a closed manifold and $\xi$ is a contact structure on $Y$ which admits a nondegenerate hypertight contact form, a variant of the above construction is used in \cite{equi19} to define the {\em $S^1$-equivariant contact homology\/} $CH_*^{S^1}(Y,\xi)$. Again, the analysis in \cite{equi19} produces Morse-Bott systems, morphisms, and homotopies, and then Theorem~\ref{thm:main} gives an invariant (of closed contact manifolds that admit nondegenerate hypertight contact forms).
\end{example}

\begin{example}
\label{ex:morse}
A more classical example arises when $Z$ is a closed smooth manifold, $f:Z\to\R$ is a Morse-Bott function whose critical set is a union of $1$-manifolds, and $g$ is a generic metric on $Z$. One can then define a Morse-Bott system where:
\begin{itemize}
\item
$X$ is the set of components of the critical set of $f$.
\item
If $x\in X$, then:
\begin{itemize}
\item
$S(x)$ is the component $x$, with an arbitrary orientation.
\item
The local system $\mc{O}_x$ is the orientation bundle of the bundle of unstable manifolds of the critical points in $S(x)$. That is, if $p\in S(x)$, and if $\mc{D}(p)$ denotes the unstable manifold of $p$, then 
\[
\mc{O}_x(p)=H_{\op{ind}(x)}(\mc{D}(p),\mc{D}(p)\setminus\{p\}).
\]
Here $\op{ind}(x) = \op{dim}(\mc{D}(p))$ denotes the (lower) Morse index of the component $x$.
\end{itemize}
\item Let $x_+$ and $x_-$ be distinct components of the critical set. Then $M_d(x_+,x_-)$ is nonempty only if $d=\op{ind}(x_+) - \op{ind}(x_-)$. In this case, $M_d(x_+,x_-)$ is the moduli space of maps $\gamma:\R\to Z$ satisfying the equations
\begin{gather*}
\gamma'(s) = \nabla f(\gamma(s)),\\
\lim_{s\to\pm\infty}\gamma(s) \in S(x_\pm).
\end{gather*}
Here we mod out the set of maps $\gamma$ by $\R$ translation in the domain. The compactifications of these moduli spaces are defined by adjoining ``broken flow lines''.
\item The evaluation map $e_\pm$ sends $\gamma\mapsto \lim_{s\pm\infty}\gamma(s)$.
\end{itemize}
The cascade homology of this Morse-Bott system is canonically isomorphic to the singular homology of the manifold $Z$. Indeed, following \cite{bourgeois}, one can perturb the Morse-Bott function $f$ to a Morse function $f'$, such that the cascade chain complex is canonically isomorphic at the chain level to the Morse complex of $(f',g)$, with each component of the critical set of $f$ contributing two critical points of $f'$, both close to the base point used to define the cascade chain complex. See e.g.\ \cite{bh2}.
\end{example}

\subsection{Comparison with other approaches}

\begin{remark}
Zhengyi Zhou \cite{zhou} has independently developed an abstract Morse-Bott theory which is similar in spirit to what we are doing here, but applicable in different situations. He defines a ``flow category'', after work of Cohen-Jones-Segal \cite{cjs}, which is related to our notion of ``Morse-Bott system''. In a flow category, the ``critical submanifolds'' can have arbitrary dimension, unlike the Morse-Bott systems in this paper which only have one-dimensional critical submanifolds. However a flow category is also required to satisfy strong analytic assumptions, in particular that the moduli spaces have compactifications which are smooth manifolds with corners; while we make weaker analytic assumptions, in which the only compactifications that arise are topological $1$-manifolds with boundary. Zhou defines a kind of Morse-Bott cohomology out a flow category using de Rham theory, and in particular with coefficients in $\R$. One can presumably also set up cascade homology over $\Z$ in this setting.
\end{remark}

\begin{remark}
There is also an older approach to Morse-Bott theory due to Fukaya \cite{fukaya}. (See \cite{bh1} for a variant of this for Morse-Bott functions on finite-dimensional manifolds.) The idea is to define a chain complex which consists of appropriate chains in the (manifolds associated to the) critical submanifolds. The differential is the sum of the usual boundary operator on chains, plus a term which consists of a pullback-pushforward of chains over the moduli spaces. This approach has the nice feature that it does not involve any choice of base points. One can implement this theory for Morse-Bott systems and prove that it is canonically isomorphic to cascade homology. However we have omitted this story in order to keep this paper to a reasonable length.
\end{remark}

\subsection{The rest of the paper}

In \S\ref{sec:mbs}, we define the notions of Morse-Bott system, morphism of Morse-Bott systems, composition of morphisms, and homotopy of morphisms. We also prove that the composition of morphisms is a morphism. We have endeavoured to make a minimum of assumptions, with the result that the definitions are somewhat long. In many ``real-life'' situations, one knows stronger transversality and compactification properties which are simpler to state. See the remarks in \S\ref{sec:defmbs}.

In \S\ref{sec:cascade} we set up cascade moduli spaces and prove their key properties. We use these to define the cascade homology of a Morse-Bott system, as well as a map on cascade homology induced by a morphism of Morse-Bott systems. We prove that the induced maps are functorial, and invariant under homotopy of morphisms. Finally, we show that the above constructions do not depend on the choice of base points. The conclusion in \S\ref{sec:conclusion} reviews where all of the points in Theorem~\ref{thm:main} are proved.

\paragraph{Acknowledgments.} We thank Zhengyi Zhou for helpful conversations.

\section{Morse-Bott systems}
\label{sec:mbs}

In this section we give the precise definitions of ``Morse-Bott system'', ``morphism'' of Morse-Bott system, and ``homotopy'' of morphisms. We also define the composition of ``composable'' morphisms and prove that this is a morphism.

\subsection{Conventions}
\label{sec:conventions}

\subsubsection{Orientation of level sets}

If $X$ is an $n$-dimensional oriented manifold, if $S$ is an oriented $1$-manifold, if $f:X\to S$ is a smooth map, and if $p\in S$ is a regular value of $f$, then we orient $f^{-1}(p)$ using the ``derivative first'' convention. This means that if $x\in f^{-1}(p)$, and if $(v_1,\ldots,v_n)$ is an oriented basis for $T_xX$ such that $df_x(v_1)>0$, then $(v_2,\ldots,v_n)$ is an oriented basis for $T_x(f^{-1}(p))$.

\subsubsection{Orientation of fiber products}

Let $X$ and $Y$ be oriented manifolds of dimension $m$ and $n$ respectively, let $S$ be an oriented $1$-manifold, and let $e_-:X\to S$ and $e_+:Y\to S$ be smooth maps. Suppose that the fiber product $X\times_S Y$ is cut out transversely. We then orient this fiber product as follows. Given $(x,y)\in X\times Y$ with $e_-(x)=e_+(y)$, choose $(u_1,v_1)\in T_xX\oplus T_yY$ such that $de_-(u_1)-de_+(v_1)>0$ with respect to the orientation on $S$. Choose $(u_i,v_i)_{i=2,\ldots,m+n}$ with $de_-(u_i)=de_+(v_i)$ such that $(u_1,v_1),\ldots,(u_{m+n},v_{m+n})$ is an oriented basis for $T_x X \oplus T_y Y$. Then $(u_2,v_2),\ldots,(u_{m+n},v_{m+n})$ is an oriented basis for $T_{(x,y)}(X\times_S Y)$ if and only if $m$ is odd.

This convention is chosen so that fiber product is associative.
Namely, if $Z$ is another oriented manifold, if $S'$ is another oriented $1$-manifold, and if $e_-:Y\to S'$ and $e_+:Z\to S'$ are smooth maps, then we have an equality of oriented manifolds
\begin{equation}
\label{eqn:fpna}
X\times_S (Y\times_{S'} Z) = (X\times_S Y) \times_{S'} Z
\end{equation}
whenever all fiber products in this equation are cut out transversely.

Another nice property of this convention is that when $X$ or $Y$ is equal to $S$, with $e_-$ or $e_+$ equal to the identity map, we have
\begin{equation}
\label{eqn:fpid}
\begin{split}
X\times_S S &= X,\\
S\times_S Y &= Y
\end{split}
\end{equation}
as oriented manifolds.

Also note that if $X$ or $Y$ is a positively oriented point $p\in S$, and if $e_-$ or $e_+$ respectively is the inclusion $\{p\}\to S$, then we have
\begin{equation}
\label{eqn:fpls}
\begin{split}
\{p\}\times_S Y &= e_+^{-1}(p)\subset Y,\\
(-1)^{\dim(X)-1}X\times_S\{p\} &= e_-^{-1}(p)\subset X
\end{split}
\end{equation}
as oriented manifolds.

If $X$ and/or $Y$ have boundary, then the boundary (codimension 1 stratum) of the fiber product $X\times_S Y$ is given by
\begin{equation}
\label{eqn:bfp}
\partial\left(X\times_S Y\right) = (\partial X) \times_S Y \cup (-1)^{\dim(X)-1}X\times_S \partial Y.
\end{equation}

\subsubsection{Compactifications}

Let $M$ be a smooth oriented $1$-manifold without boundary. In this paper, we define a ``compactification'' of $M$ to be a compact oriented topological $1$-manifold $\overline{M}$, possibly with boundary, such that $M$ is an open subset of $\overline{M}$, the orientation of $\overline{M}$ restricts to the orientation of $M$, and $\overline{M}\setminus M$ is finite.

Note that if $\overline{M}$ is a compactification of $M$, then $\overline{M}\setminus M$ contains $\partial\overline{M}$, but $\overline{M}\setminus M$ might also contain finitely many additional points. For example, under the above definition, the closed interval $[0,2]$ is a compactification of the union of open intervals $(0,1)\cup(1,2)$. Here $1$ is an ``extra point'' in the compactification which is not in the boundary.  We need to allow such points in order for composition of morphisms of Morse-Bott systems to work; see Proposition~\ref{prop:composition} below.

\subsection{The fundamental definition}
\label{sec:defmbs}

\begin{definition}
\label{def:mbs}
A {\em Morse-Bott system} is a tuple $(X,|\cdot|,S,\mc{O},M_*,e_\pm)$ where:
\begin{itemize}
\item $X$ is a set.
\item $|\cdot|$ is a function $X\to\Z/2$ (the ``grading'').
\item $S$ is a function which assigns to each $x\in X$ a closed connected oriented $1$-manifold $S(x)$.
\item $\mc{O}$ assigns to each $x\in X$ a local system $\mc{O}_x$ over $S(x)$ which is locally isomorphic to $\Z$.
\item For $d\in\{0,1,2,3\}$ and $x_+,x_-\in X$ distinct, $M_d(x_+,x_-)$ is a smooth manifold of dimension $d$ (the ``moduli space'').
\item $e_+:M_d(x_+,x_-)\to S(x_+)$ and $e_-:M_d(x_+,x_-)\to S(x_-)$ are smooth maps (the ``evaluation maps'').
\item $M_d(x_+,x_-)$ is equipped with an orientation with values\footnote{If $M$ is a smooth manifold and $\mc{O}$ is a local system over $M$ which is locally isomorphic to $\Z$, then an ``orientation of $M$ with values in $\mc{O}$'' means a trivialization of $\mc{O}_M\tensor\mc{O}$, where $\mc{O}_M$ denotes the orientation sheaf of $M$.} in $e_+^*\mc{O}_{x_+}\tensor e_-^*\mc{O}_{x_-}$.
\end{itemize}
We require these moduli spaces and evaluation maps to satisfy the Grading, Fiber Product Transversality, Finiteness, and Compactification axioms below.
\end{definition}

\begin{description}
\item{(Grading)}
If $M_d(x_+,x_-)$ is nonempty, then
\begin{equation}
\label{eqn:grading}
d\equiv |x_+| - |x_-| \mod 2. 
\end{equation}
\item{(Fiber Product Transversality)}
If $x_1,x_2,x_3\in X$ are distinct and $d_1,d_2$ are nonnegative integers with $d_1+d_2\le 3$, then the fiber product
\[
M_{d_1}(x_1,x_2) \times_{S(x_2)}M_{d_2}(x_2,x_3)
\]
is cut out transversely.
\item{(Finiteness)}
For each $x_0\in X$, there are only finitely many tuples $(k,x_1,\ldots,x_k)$ where $k$ is a positive integer and $x_1,\ldots,x_k\in X$, such that there exist $d_1,\ldots,d_k\in\{0,1,2,3\}$ with $M_{d_i}(x_{i-1},x_i)\neq \emptyset$ for all $i=1,\ldots,k$.
\end{description}

To state the Compactification axiom, given $p_\pm\in S(x_\pm)$, define the following three subsets of ${M}_d(x_+,x_-)$:
\begin{equation}
\label{eqn:Mdconstrained}
\begin{split}
{M}_d(x_+,p_+,x_-) &= e_+^{-1}(p_+),\\
{M}_d(x_+,x_-,p_-) &= e_-^{-1}(p_-),\\
{M}_d(x_+,p_+,x_-,p_-) &= e_+^{-1}(p_+)\cap e_-^{-1}(p_-).
\end{split}
\end{equation}

\begin{convention}
\label{convention:constraints}
If $p_+$ is a regular value of $e_+$, then we orient $M_d(x_+,p_+,x_-)$ as a level set of $e_+$. If $p_-$ is a regular value of $e_-$, then we orient $M_d(x_+,x_-,p_-)$ as minus a level set of $e_-$. If $(p_+,p_-)$ is a regular value of $e_+\times e_-$, then we orient $M_d(x_+,p_+,x_-,p_-)$ as a level set of $e_+$ on $M_d(x_+,x_-,p_-)$.
\end{convention}

\begin{description}
\item{(Compactification)}
Let $x_+,x_-\in X$ be distinct, and let $(p_+,p_-)\in S(x_+)\times S(x_-)$ be generic. ``Genericity'' includes but is not limited to the following:
\begin{itemize}
\item $p_+$ is a regular value of all evaluation maps $e_+:M_d(x_+,x_0)\to S(x_+)$ for $d\le 2$.
\item $p_-$ is a regular value of all evaluation maps $e_-:M_d(x_0,x_-)\to S(x_-)$ for $d\le 2$.
\item $(p_+,p_-)$ is a regular value of
\[
e_+\times e_-:M_d(x_+,x_-)\longrightarrow S(x_+)\times S(x_-)
\]
for $d\le 3$.
\item All fiber products on the right hand sides of \eqref{eqn:compactification2-}, \eqref{eqn:compactification2+} and \eqref{eqn:compactification3} below are cut out transversely.
\end{itemize}
Then:
\begin{description}
\item{(a)} The moduli space $M_0(x_+,x_-)$ is compact, i.e.\ finite.
\item{(b)} The moduli space $M_1(x_+,x_-)$ has a compactification $\overline{M}_1(x_+,x_-)$, whose boundary has an identification\footnote{More precisely, we should say that part of the data of the Morse-Bott system is the compactification $\overline{M}_1(x_+,x_-)$ and the identification \eqref{eqn:compactification1}. A similar remark applies to the rest of the compactification axiom here and other compactification axioms later.}
\begin{equation}
\label{eqn:compactification1}
\partial\overline{M}_1(x_+,x_-) = \coprod_{\substack{x_0\neq x_+,x_- \\ d_++d_-=1}}
(-1)^{d_+}
M_{d_+}(x_+,x_0)\times_{S(x_0)}M_{d_-}(x_0,x_-).
\end{equation}
\item{(c)}
The moduli space $M_2(x_+,x_-,p_-)$ has a compactification $\overline{M}_2(x_+,x_-,p_-)$, whose boundary has an identification
\begin{equation}
\label{eqn:compactification2-}
\partial\overline{M}_2(x_+,x_-,p_-) = \coprod_{\substack{x_0\neq x_+,x_- \\ d_++d_-=2}}
(-1)^{d_+}
M_{d_+}(x_+,x_0)\times_{S(x_0)}M_{d_-}(x_0,x_-,p_-).
\end{equation}
The moduli space $M_2(x_+,p_+,x_-)$ has a compactification $\overline{M}_2(x_+,p_+,x_-)$, whose boundary has an identification
\begin{equation}
\label{eqn:compactification2+}
\partial\overline{M}_2(x_+,p_+,x_-) = \coprod_{\substack{x_0\neq x_+,x_- \\ d_++d_-=2}}
(-1)^{d_+-1}
M_{d_+}(x_+,p_+,x_0)\times_{S(x_0)}M_{d_-}(x_0,x_-).
\end{equation}
\item{(d)}
The moduli space
$M_3(x_+,p_+,x_-,p_-)$ has a compactification
$\overline{M}_3(x_+,p_+,x_-,p_-)$, whose boundary has an identification
\begin{equation}
\label{eqn:compactification3}
\partial\overline{M}_3(x_+,p_+,x_-,p_-) = \coprod_{\substack{x_0\neq x_+,x_- \\ d_++d_-=3}}
(-1)^{d_+-1}
M_{d_+}(x_+,p_+,x_0)\times_{S(x_0)}M_{d_-}(x_0,x_-,p_-).
\end{equation}
\end{description}
In each of the identifications \eqref{eqn:compactification1}, \eqref{eqn:compactification2-}, \eqref{eqn:compactification2+}, and \eqref{eqn:compactification3}, the boundary orientation on the left hand side agrees with the fiber product orientation on the right hand side.
In (b), (c), and (d), the evaluation maps $e_\pm$ on $M_1(x_+,x_-)$ etc.\ extend continuously to the compactifications, and on the boundaries satisfy
\[
e_\pm(u_+,u_-)=e_\pm(u_\pm).
\]
\begin{description}
\item{(e)} 
The right hand sides of \eqref{eqn:compactification1}, \eqref{eqn:compactification2-}, \eqref{eqn:compactification2+} and \eqref{eqn:compactification3} would not include any extra points if we used compactifications. For example, for \eqref{eqn:compactification1}, this means that if $x_+, x_0, x_-$ are distinct, then
\[
\begin{split}
M_0(x_+,x_0) \times_{S(x_0)} \left(\overline{M}_1(x_0,x_-)\setminus M_1(x_0,x_-)\right) &= \emptyset,\\
\left(\overline{M}_1(x_+,x_0)\setminus M_1(x_+,x_0)\right) \times_{S(x_0)} M_0(x_0,x_-) &= \emptyset.
\end{split}
\]
For \eqref{eqn:compactification2-}, this means that if $x_+, x_0, x_-$ are distinct, and if $p_-\in S(x_-)$ is generic, then
\[
\begin{split}
M_0(x_+,x_0) \times_{S(x_0)} \left(\overline{M}_2(x_0,x_-,p_-)\setminus M_2(x_0,x_-,p_-)\right) &= \emptyset,\\
\left(\overline{M}_1(x_+,x_0)\setminus M_1(x_+,x_0)\right) \times_{S(x_0)} M_1(x_0,x_-,p_-) &= \emptyset.
\end{split}
\]
\end{description}
\end{description}

\begin{remark}
The Grading axiom is needed only to obtain a $\Z/2$ grading on the cascade homology of a Morse-Bott system. One can also modify this axiom to obtain a relative $\Z/N$-grading on the cascade homology; to do this, one requires that the grading difference of two elements of $X$ is a well-defined element of $\Z/N$, such that $M_d(x_+,x_-)$ is nonempty only if $|x_+|-|x_-|\equiv d\mod N$.
\end{remark}

\begin{remark}
In many cases of interest, the following stronger version of the Finiteness axiom holds: there is an ``action'' function $A:X\to\R$ such that (i) for each $L\in\R$, there are only finitely many $x\in X$ with $A(x)<L$, and (ii) if $M_d(x_+,x_-)\neq\emptyset$ then $A(x_+)>A(x_-)$.
\end{remark}

\begin{remark}
A stronger version of parts (b)--(d) of the Compactification axiom would be that $M_d(x_+,x_-)$ has a compactification to a smooth manifold with corners $\overline{M}_d(x_+,x_-)$ for $d=1,2,3$, whose codimension $1$ stratum is given by
\[
\partial \overline{M}_d(x_+,x_-) = \coprod_{\substack{x_0\neq x_+,x_- \\ d_++d_-=d}} (-1)^{d_+}M_{d_+}(x_+,x_0) \times_{S(x_0)} M_{d_-}(x_0,x_-).
\]
Equations \eqref{eqn:compactification1}, \eqref{eqn:compactification2-}, \eqref{eqn:compactification2+}, and \eqref{eqn:compactification3} would follow from this, and this is our motivation for the signs in those equations.
\end{remark}

\begin{remark}
Part (e) of the Compactification axiom holds automatically if we know two additional properties: (i) Each of the compactifications in \eqref{eqn:compactification1}, \eqref{eqn:compactification2-}, \eqref{eqn:compactification2+}, and \eqref{eqn:compactification3} does not include any additional points aside from the boundary points of the compactification. That is, as a set we have $\overline{M}_1(x_+,x_-)\setminus M_1(x_+,x_-) = \partial M_1(x_+,x_-)$ etc. (ii) Fiber Product Transversality also holds for triple fiber products when the sum of the dimensions of the factors is at most $3$.
\end{remark}

\subsection{Morphisms of Morse-Bott systems}

We now define a morphism of Morse-Bott systems. This is very similar to the definition of a Morse-Bott system, but some signs are changed in the compactification axiom: compare equations \eqref{eqn:grading}, \eqref{eqn:compactification1}, \eqref{eqn:compactification2-}, \eqref{eqn:compactification2+}, and \eqref{eqn:compactification3} with equations \eqref{eqn:gradingphi}, \eqref{eqn:compactiphi1}, \eqref{eqn:compactiphi2-}, \eqref{eqn:compactiphi2+}, and \eqref{eqn:compactiphi3} respectively. For more about these sign changes see \S\ref{sec:conjugate}.

\begin{definition}
\label{def:morphism}
Let $A_1=(X_1,|\cdot|_1,S_1,\mc{O}^1,M_*^1,e_\pm^1)$ and $A_2=(X_2,|\cdot|_2,S_2,\mc{O}^2,M_*^2,e_\pm^2)$ be Morse-Bott systems. A {\em morphism\/} $\Phi$ of Morse-Bott systems from $A_1$ to $A_2$ consists of the following data for each $x_1\in X_1$, $x_2\in X_2$, and $d\in\{0,1,2,3\}$:
\begin{itemize}
\item A ``moduli space'' $\Phi_d(x_1,x_2)$, which is a smooth manifold of dimension $d$.
\item ``Evaluation maps'', which are smooth maps
\[
\begin{split}
e_+: \Phi_d(x_1,x_2) &\longrightarrow S_1(x_1),\\
e_-: \Phi_d(x_1,x_2) & \longrightarrow S_2(x_2).
\end{split}
\]
\item An orientation of $\Phi_d(x_1,x_2)$ with values in $e_+^*\mc{O}_{x_1}^1\tensor e_-^*\mc{O}_{x_2}^2$.
\end{itemize}
These are required to satisfy the following Grading, Finiteness, Fiber Product Transversality, and Compactification properties:
\begin{description}
\item{(Grading)} 
If $\Phi_d(x_1,x_2)$ is nonempty, then
\begin{equation}
\label{eqn:gradingphi}
d \equiv |x_1|_1 - |x_2|_2 + 1 \mod 2.
\end{equation}
\item{(Finiteness)}
For each $x_1\in X_1$, there exist only finitely many $x_2\in X_2$ such that $\Phi_d(x_1,x_2)$ is nonempty for some $d\in\{0,1,2,3\}$.
\item{(Fiber Product Transversality)}
If $x_1,x_1'\in X_1$ are distinct and $x_2\in X_2$, then all fiber products
\[
M^1_{d_1}(x_1,x_1')\times_{S_1(x_1')} \Phi_d(x_1',x_2)
\]
with $d_1+d\le 3$ are cut out transversely. Likewise, if $x_1\in X_1$ and $x_2,x_2'\in X_2$ are distinct, then all fiber products
\[
\Phi_d(x_1,x_2') \times_{S_2(x_2')} M^2_{d_2}(x_2',x_2)
\]
with $d+d_2\le 3$ are cut out transversely.
\item{(Compactification)}
Let $x_1\in X_1$ and $x_2\in X_2$. Let $(p_1,p_2)\in S_1(x_1)\times S_2(x_2)$ be generic. In particular, assume that $p_1$ is a regular value of all evaluation maps $e_+$ on $M_d$ and $\Phi_d$ for $d\le 2$; $p_2$ is a regular value of all evaluation maps $e_-$ on $M_d$ and $\Phi_d$ for $d\le 2$; and $(p_1,p_2)$ is a regular value of $e_+\times e_-$ on $\Phi_d(x_1,x_2)$ for $d\le 3$. Define $\Phi_d(x_1,p_1,x_2)$, $\Phi_d(x_1,x_2,p_2)$, and $\Phi_d(x_1,p_1,x_2,p_2)$ as in \eqref{eqn:Mdconstrained}. Assume also that all fiber products on the right hand sides of \eqref{eqn:compactiphi2-}, \eqref{eqn:compactiphi2+} and \eqref{eqn:compactiphi3} below are cut out transversely. Then: 
\begin{description}
\item{(a)}
The moduli space $\Phi_0(x_1,x_2)$ is compact, i.e.\ finite.
\item{(b)} $\Phi_1(x_1,x_2)$ has a compactification $\overline{\Phi}_1(x_1,x_2)$, whose boundary has an identification
\begin{equation}
\label{eqn:compactiphi1}
\begin{split}
\partial \overline{\Phi}_1(x_1,x_2) =& \coprod_{\substack{x_1'\in X_1\setminus\{x_1\}\\d_1+d=1}}
M_{d_1}^1(x_1,x_1') \times_{S_1(x_1')} \Phi_d(x_1',x_2)\\
& \bigsqcup \coprod_{\substack{x_2'\in X_2\setminus\{x_2\}\\ d+d_2=1}} 
(-1)^{d}
\Phi_d(x_1,x_2') \times_{S_2(x_2')} M_{d_2}^2(x_2',x_2).
\end{split}
\end{equation}
\item{(c)}
$\Phi_2(x_1,x_2,p_2)$ has a compactification $\overline{\Phi}_2(x_1,x_2,p_2)$, whose boundary has an identification
\begin{equation}
\label{eqn:compactiphi2-}
\begin{split}
\partial \overline{\Phi}_2(x_1,x_2,p_2) =& \coprod_{\substack{x_1'\in X_1\setminus\{x_1\}\\d_1+d=2}}
M_{d_1}^1(x_1,x_1') \times_{S_1(x_1')} \Phi_d(x_1',x_2,p_2)\\
& \bigsqcup \coprod_{\substack{x_2'\in X_2\setminus\{x_2\}\\ d+d_2=2}} 
(-1)^{d}
\Phi_d(x_1,x_2') \times_{S_2(x_2')} M_{d_2}^2(x_2',x_2,p_2).
\end{split}
\end{equation}
Likewise,
$\Phi_2(x_1,p_1,x_2)$ has a compactification $\overline{\Phi}_2(x_1,p_1,x_2)$, whose boundary has an identification
\begin{equation}
\label{eqn:compactiphi2+}
\begin{split}
\partial \overline{\Phi}_2(x_1,p_1,x_2) =& \coprod_{\substack{x_1'\in X_1\setminus\{x_1\}\\d_1+d=2}}
-
M_{d_1}^1(x_1,p_1,x_1') \times_{S_1(x_1')} \Phi_d(x_1',x_2)\\
& \bigsqcup \coprod_{\substack{x_2'\in X_2\setminus\{x_2\}\\ d+d_2=2}} 
(-1)^{d-1}
\Phi_d(x_1,p_1,x_2') \times_{S_2(x_2')} M_{d_2}^2(x_2',x_2).
\end{split}
\end{equation}
\item{(d)} $\Phi_3(x_1,p_1,x_2,p_2)$ has a compactification $\overline{\Phi}_3(x_1,p_1,x_2,p_2)$, whose boundary has an identification
\begin{equation}
\label{eqn:compactiphi3}
\begin{split}
\partial \overline{\Phi}_3(x_1,p_1,x_2,p_2) =& \coprod_{\substack{x_1'\in X_1\setminus\{x_1\}\\d_1+d=3}}
-
M_{d_1}^1(x_1,p_1,x_1') \times_{S_1(x_1')} \Phi_d(x_1',x_2,p_2)\\
& \bigsqcup \coprod_{\substack{x_2'\in X_2\setminus\{x_2\}\\ d+d_2=3}} 
(-1)^{d-1}
\Phi_d(x_1,p_1,x_2') \times_{S_2(x_2')} M_{d_2}^2(x_2',x_2,p_2).
\end{split}
\end{equation}
\end{description}
In each of the identifications \eqref{eqn:compactiphi1}, \eqref{eqn:compactiphi2-}, \eqref{eqn:compactiphi2+}, and \eqref{eqn:compactiphi3}, the boundary orientation on the left hand side agrees with the fiber product orientation on the right hand side.
In (b), (c), and (d), the evaluation maps $e_\pm$ extend continuously to the compactifications and satisfy $e_\pm(u_+,u_-)=e_\pm(u_\pm)$.
\begin{description}
\item{(e)}
As in part (e) of the Compactification axiom in the definition of Morse-Bott system, the right hand sides of \eqref{eqn:compactiphi1}, \eqref{eqn:compactiphi2-}, \eqref{eqn:compactiphi2+} and \eqref{eqn:compactiphi3} would not include any extra points if we used compactifications.  For example, for \eqref{eqn:compactiphi1}, this means that if $x_1,x_1'\in X_1$ are distinct and $x_2\in X_2$, then
\begin{equation}
\label{eqn:compacte1}
\begin{split}
(\overline{M}^1_1(x_1,x_1')\setminus M^1_1(x_1,x_1'))\times_{S_1(x_1')} \Phi_0(x_1',x_2) &= \emptyset,\\
M^1_0(x_1,x_1')\times_{S_1(x_1')} (\overline{\Phi}_1(x_1',x_2)\setminus \Phi_1(x_1',x_2)) &= \emptyset,
\end{split}
\end{equation}
and if $x_1\in X_1$ and $x_2,x_2'\in X_2$ are distinct, then
\begin{equation}
\label{eqn:compacte2}
\begin{split}
(\overline{\Phi}_1(x_1,x_2')\setminus \Phi_1(x_1,x_2')) \times_{S_2(x_2')} M^2_0(x_2',x_2) &= \emptyset,\\
\Phi_0(x_1,x_2') \times_{S_2(x_2')} (\overline{M}^2_1(x_2',x_2)\setminus M^2_1(x_2',x_2)) &= \emptyset.
\end{split}
\end{equation}
\end{description}
\end{description}
\end{definition}

\begin{example}
\label{ex:identity}
If $A=(X,|\cdot|,S,\mc{O},M_*,e_\pm)$ is a Morse-Bott system, then the identity morphism from $A$ to itself is defined as follows.
\begin{itemize}
\item
For all $x_1,x_2\in X$, we have
\begin{equation}
\label{eqn:identityempty}
\Phi_0(x_1,x_2) = \emptyset.
\end{equation}
\item
If $x\in X$, then
\[
\Phi_d(x,x) = \left\{\begin{array}{cl} S(x), & d=1,\\ \emptyset, & d\neq 1.
\end{array}\right.
\]
The evaluation maps
\[
e_\pm: \Phi_1(x,x) \longrightarrow S(x)
\]
are both defined to be the identity map. The orientation on $\Phi_1(x,x)$ with values in
\[
e_+^*\mc{O}_x\tensor e_-^*\mc{O}_x = \mc{O}_x\tensor\mc{O}_x = \Z
\]
agrees with the orientation on $S(x)$.
\item
If $x_1,x_2\in X$ are distinct, then
\begin{equation}
\label{eqn:identityextra}
\Phi_d(x_1,x_2) = \R\times M_{d-1}(x_1,x_2)
\end{equation}
for each $d\in\{1,2,3\}$. The evaluation maps $e_\pm$ on $\Phi_d(x_1,x_2)$ are pulled back from the evaluation maps on $M_{d-1}(x_1,x_2)$. The orientation on $\Phi_d(x_1,x_2)$ is the product orientation.
\end{itemize}
\end{example}

\begin{lemma}
\label{lem:identity}
If $A$ is a Morse-Bott system, then the identity morphism $\Phi$ from $A$ to itself, defined in Example~\ref{ex:identity}, is a morphism of Morse-Bott systems.
\end{lemma}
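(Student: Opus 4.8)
The plan is to verify, one at a time, the Grading, Finiteness, Fiber Product Transversality, and Compactification properties in Definition~\ref{def:morphism} for the specific $\Phi$ of Example~\ref{ex:identity}, reducing each to the corresponding axiom for $A$. First I would record a consequence of the Finiteness axiom for $A$ that gets used repeatedly: if $x\ne x'$ in $X$, then $M_{d_1}(x,x')$ and $M_{d_2}(x',x)$ cannot both be nonempty, for any $d_1,d_2$. Indeed, if they were, then for every $k\ge 1$ the alternating tuple $(2k,x',x,x',x,\dots,x',x)$ would have all its moduli spaces nonempty, producing infinitely many tuples starting from $x_0=x$ and contradicting Finiteness. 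Hence every ``round-trip'' fiber product $M_{d_1}(x,x')\times_{S(x')}M_{d_2}(x',x)$ (possibly with point constraints) is empty.

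The Grading property is immediate: for $\Phi_1(x,x)=S(x)$ one has $1\equiv|x|-|x|+1$, and for $x_1\ne x_2$ the space $\Phi_d(x_1,x_2)=\R\times M_{d-1}(x_1,x_2)$ is nonempty only when $M_{d-1}(x_1,x_2)$ is, whence $d-1\equiv|x_1|-|x_2|$ by Grading for $A$. The Finiteness property follows from the $k=1$ case of Finiteness for $A$. For Fiber Product Transversality, consider $M_{d_1}(x_1,x_1')\times_{S(x_1')}\Phi_d(x_1',x_2)$ with $x_1\ne x_1'$ and $d_1+d\le3$ (the other family being symmetric). If $d=0$ it is empty. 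If $x_1'=x_2$ it equals $M_{d_1}(x_1,x_1')\times_{S(x_1')}S(x_1')=M_{d_1}(x_1,x_1')$ by \eqref{eqn:fpid}, cut out transversely since the evaluation map $S(x_1')\to S(x_1')$ is the identity. If $x_1'\ne x_2$, then $\Phi_d(x_1',x_2)=\R\times M_{d-1}(x_1',x_2)$, and since the evaluation maps ignore the $\R$-factor the fiber product equals $\R\times\big(M_{d_1}(x_1,x_1')\times_{S(x_1')}M_{d-1}(x_1',x_2)\big)$, which is cut out transversely iff the bracketed fiber product is; if $x_1,x_1',x_2$ are distinct this holds by Fiber Product Transversality for $A$ (as $d_1+(d-1)\le2$), and if $x_2=x_1$ the bracketed fiber product is empty by the preliminary observation.

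For Compactification, part (a) is trivial since $\Phi_0\equiv\emptyset$. I would then split into two cases. If $x_1=x_2=x$, then $\Phi_1(x,x)=S(x)$ is already compact with empty boundary and $\Phi_d(x,x)=\emptyset$ for $d\ne1$, so one needs the right-hand sides of \eqref{eqn:compactiphi1}, \eqref{eqn:compactiphi2-}, \eqref{eqn:compactiphi2+}, \eqref{eqn:compactiphi3} to be empty: each term there either contains a factor $\Phi_0=\emptyset$, or (after factoring out the $\R$) sits inside a round-trip fiber product $M_*(x,x')\times_{S(x')}M_*(x',x)$, hence is empty. If $x_1\ne x_2$, then $\Phi_d(x_1,x_2)=\R\times M_{d-1}(x_1,x_2)$, and the relevant constrained space $\Phi_d(\,\cdot\,,p_i,\,\cdot\,)$ is $\R$ times the corresponding constrained moduli space of $A$, which for generic $p_i$ is finite (using that the evaluation maps for $A$ extend to its compactifications, so $p_i$ can be chosen off the finite image of the complement of the moduli space in its compactification). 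Thus the space to be compactified is a finite union of copies of $\R$, with compactification $[-\infty,+\infty]$ times a finite set and boundary two copies of that finite set. On the right-hand sides of \eqref{eqn:compactiphi1}--\eqref{eqn:compactiphi3}, the $\Phi_0$ terms vanish, the terms with $x_1'=x_2$ (resp.\ $x_2'=x_1$) collapse via \eqref{eqn:fpid} and \eqref{eqn:fpls} to a single signed copy of the constrained moduli space of $A$, and every remaining term, after factoring out $\R$, is a fiber product of moduli spaces of $A$ (possibly constrained) of expected dimension $-1$ which is cut out transversely — by Fiber Product Transversality for $A$ when the three labels are distinct, and by the genericity hypothesis in the morphism Compactification axiom for the constrained versions — hence empty. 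So the right-hand side is exactly two oppositely oriented copies of the constrained moduli space, matching the boundary of the compactification; part (e) follows the same way, using part (e) for $A$ and that $[-\infty,+\infty]$ has no extra points.

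The hard part will be the orientation bookkeeping in the off-diagonal case: one must check that the explicit signs $(-1)^{d}$, $(-1)^{d-1}$ and the leading $-$ signs in \eqref{eqn:compactiphi1}--\eqref{eqn:compactiphi3}, together with the fiber-product and level-set conventions of \S\ref{sec:conventions}, Convention~\ref{convention:constraints}, the product orientation on $\R\times M_{d-1}$, and the boundary orientation of $[-\infty,+\infty]$ times an oriented $0$-manifold, make the $x_1'=x_2$ term agree with one end and the $x_2'=x_1$ term with the other, with the correct relative sign. This is a finite (if slightly tedious) sign calculation; all the genuine content is in the preliminary observation and the dimension counts above.
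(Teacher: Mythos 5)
Your proposal is correct and follows essentially the same route as the paper: each axiom is verified by reduction to the corresponding axiom for $A$, with the compactifications taken to be $\overline{\Phi}_1(x_1,x_2)=\overline{\R}\times M_0(x_1,x_2)$ and its constrained analogues in the off-diagonal case, and the trivial compactification $\overline{\Phi}_1(x,x)=S(x)$ on the diagonal. Your Finiteness-based observation that ``round-trip'' fiber products $M_{d_1}(x,x')\times_{S(x')}M_{d_2}(x',x)$ are empty makes explicit a point the paper leaves implicit (it is needed both for Fiber Product Transversality when $x_2=x_1$ and for the emptiness of the boundary terms in the diagonal case), and the sign verification you defer is exactly the short check the paper carries out using \eqref{eqn:fpid} and \eqref{eqn:fpls}, so it goes through as you expect.
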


\begin{proof}
We need to check that the identity morphism $\Phi$ satisfies the Grading, Finiteness, Fiber Product Transversality, and Compactification axioms.

The Grading and Finiteness properties for $\Phi$ follow from the corresponding properties for $A$.

The Fiber Product Transversality property for $\Phi$ follows from the corresponding property for $A$, together with the fact that fiber products with the identity map are always cut out transversely.

We now prove the Compactification property for $\Phi$. Part (a) follows immediately from \eqref{eqn:identityempty}.

To prove part (b) of Compactification, suppose first that $x_1$ and $x_2$ are equal, to $x\in X$. We need to check is that if $x\in X$, then $\Phi_1(x,x)$ has a compactification $\overline{\Phi}_1(x,x)$ whose boundary has an identification
\begin{equation}
\label{eqn:identitycompactification}
\begin{split}
\partial \overline{\Phi}_1(x,x)  
=& \coprod_{\substack{x_1'\in X\setminus\{x\}\\d_1+d=1}}
M_{d_1}(x,x_1') \times_{S(x_1')} \Phi_d(x_1',x)\\
& \bigsqcup \coprod_{\substack{x_2'\in X\setminus\{x\}\\ d+d_2=1}} 
(-1)^{d}
\Phi_d(x,x_2') \times_{S(x_2')} M_{d_2}(x_2',x)
\end{split}
\end{equation}
as oriented $0$-manifolds. Since $\Phi_1(x,x)$ is already compact, we can (and must) compactify it by defining
\[
\overline{\Phi}_1(x,x) = \Phi_1(x,x).
\]
This will then satisfy \eqref{eqn:identitycompactification}, because the right hand side of \eqref{eqn:identitycompactification} is empty by \eqref{eqn:identityempty}.

Suppose now that $x_1\neq x_2$. To prove part (b) of Compactification in this case, define
\begin{equation}
\label{eqn:iddistinct}
\overline{\Phi}_1(x_1,x_2) = \overline{\R}\times M_0(x_1,x_2),
\end{equation}
where $\overline{\R}$ denotes the compactification of $\R$ obtained by adding two points at $\pm\infty$. We need an identification
\[
\begin{split}
\partial \overline{\Phi}_1(x_1,x_2) =& \coprod_{x_1'\neq x_1}
M_{1}(x_1,x_1') \times_{S(x_1')} \Phi_0(x_1',x_2)\\
& \bigsqcup \coprod_{x_1'\neq x_1} M_0(x_1,x_1') \times_{S(x_1')} \Phi_1(x_1',x_2)\\
& \bigsqcup \coprod_{x_2'\neq x_2} 
-\Phi_1(x_1,x_2') \times_{S(x_2')} M_{0}(x_2',x_2)\\
& \bigsqcup \coprod_{x_2'\neq x_2} \Phi_0(x_1,x_2') \times_{S(x_2')} M_1(x_2',x_2).
\end{split}
\]
The left hand side of this equation, by definition, is $M_0(x_1,x_2) \sqcup -M_0(x_1,x_2)$. On the right hand side, the first and last lines are empty by \eqref{eqn:identityempty}. The second line gives $M_0(x_1,x_2)$ when $x_1'=x_2$ by \eqref{eqn:fpid}, and is empty when $x_1'\neq x_2$ by the Fiber Product Transversality property of $A$. Likewise, the third line gives $-M_0(x_1,x_2)$.

Parts (c) and (d) of Compactification are proved similarly, setting
\[
\overline{\Phi}_2(x_1,x_2,p_2) = \overline{\R}\times M_1(x_1,x_2,p_2)
\]
and so forth.

Part (e) of the Compactification axiom follows from the Fiber Product Transversality property for $A$.
\end{proof}

\subsection{Composition of morphisms}

In order to compose morphisms, we need to make the following transversality hypotheses.

\begin{definition}
\label{def:composable}
Let $A_i=(X_i,|\cdot|_i,S_i,\mc{O}^i,M_*^i,e_\pm^i)$ be Morse-Bott systems for $i=1,2,3$. Let $\Phi$ be a morphism from $A_1$ to $A_2$, and let $\Psi$ be a morphism from $A_2$ to $A_3$. We say that the morphisms $\Phi$ and $\Psi$ are {\em composable\/} if the following hold:
\begin{description}
\item{(a)}
All fiber products of the form
\[
\Phi_{d_1}(x_1,x_2)\times_{S_2(x_2)}\Psi_{d_2}(x_2,x_3)
\]
with $d_1+d_2\le 3$ are cut out transversely.
\item{(b)}
All fiber products of the form
\begin{gather*}
M^1_d(x_1',x_1)\times_{S_1(x_1)} \Phi_{d_1}(x_1,x_2)\times_{S_2(x_2)}\Psi_{d_2}(x_2,x_3),\\ \Phi_{d_1}(x_1,x_2)\times_{S_2(x_2)}\Psi_{d_2}(x_2,x_3)\times_{S_3(x_3)} M^3_d(x_3,x_3')
\end{gather*}
with $d+d_1+d_2\le 4$ are cut out transversely.
\item{(c)}
All fiber products of the form
\begin{gather*}
(\overline{\Phi}_1(x_1,x_2)\setminus\Phi_1(x_1,x_2)) \times_{S_2(x_2)}\Psi_d(x_2,x_3),\\
\Phi_d(x_1,x_2)\times_{S_2(x_2)} (\overline{\Psi}_1(x_2,x_3)\setminus\Psi_1(x_2,x_3)),\\
(\overline{\Phi}_1(x_1,x_2)\setminus\Phi_1(x_1,x_2)) \times_{S_2(x_2)} (\overline{\Psi}_1(x_2,x_3)\setminus\Psi_1(x_2,x_3))
\end{gather*}
with $d\le 1$ are cut out transversely. (In particular, the first two are empty when $d=0$, and the third is always empty.)
\item{(d)}
All of the following fiber products are empty:
\[
\begin{split}
(\overline{M}^1_1(x_1,x_1')\setminus M^1_1(x_1,x_1'))\times_{S_1(x_1')}\Phi_1(x_1',x_2)\times_{S_2(x_2)}\Psi_0(x_2,x_3) &= \emptyset,\\
(\overline{\Phi}_1(x_1,x_2^+)\setminus \Phi_1(x_1,x_2^+))\times_{S_2(x_2^+)} M^2_1(x_2^+,x_2^-) \times_{S_2(x_2^-)} \Psi_0(x_2^-,x_3) &= \emptyset,\\
\Phi_0(x_1,x_2^+)\times_{S_2(x_2^+)} M^2_1(x_2^+,x_2^-) \times_{S_2(x_2^-)} (\overline{\Psi}_1(x_2^-,x_3)\setminus \Psi_1(x_2^-,x_3)) &= \emptyset,\\
\Phi_0(x_1,x_2) \times_{S_2(x_2)} \Psi_1(x_2,x_3') \times_{S_3(x_3')} (\overline{M}^3_1(x_3',x_3)\setminus M^3_1(x_3',x_3)) &= \emptyset.
\end{split}
\]
\item{(e)}
Analogues of conditions (c) and (d) hold in which one adds a point constraint at a generic point $p_1\in S_1(x_1)$ and increases the dimension of the first factor by one, and/or adds add a point constraint at a generic point $p_3\in S_3(x_3)$ and increases the dimension of the last factor by one.
\end{description}
\end{definition}

\begin{definition}
\label{def:composition}
Under the assumptions of Definition~\ref{def:composable}, suppose that the morphisms $\Phi$ and $\Psi$ are composable. The {\em composition\/} of $\Phi$ and $\Psi$ is a morphism $\Psi\circ\Phi$ from $A_1$ to $A_3$ defined as follows: If $x_1\in X_1$ and $x_3\in X_3$ and $d\in\{0,1,2,3\}$, then
\begin{equation}
\label{eqn:composition}
(\Psi\circ\Phi)_d(x_1,x_3) = \coprod_{\substack{x_2\in X_2\\ d_1+d_2=d+1}} \Phi_{d_1}(x_1,x_2)\times_{S_2(x_2)} \Psi_{d_2}(x_2,x_3),
\end{equation}
with the fiber product orientation. This is well defined by part (a) of the definition of composability. The evaluation maps $e_+:(\Psi\circ\Phi)_d(x_1,x_3)\to S_1(x_1)$ and $e_-:(\Psi\circ\Phi)_d(x_1,x_3)\to S_3(x_3)$ are defined by $e_\pm(u_+,u_-)=e_\pm(u_\pm)$.
\end{definition}

\begin{proposition}
\label{prop:composition}
Under the assumptions of Definition~\ref{def:composable}, if the morphisms $\Phi$ and $\Psi$ are composable, then the composition $\Psi\circ\Phi$ is a morphism\footnote{More precisely, we can define compactifications $\overline{(\Psi\circ\Phi)}_1(x_1,x_3)$ etc.\ in a canonical way in order to make $\Psi\circ\Phi$ into a morphism of Morse-Bott systems.} of Morse-Bott systems.
\end{proposition}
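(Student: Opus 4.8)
The plan is to verify, one at a time, the Grading, Finiteness, Fiber Product Transversality, and Compactification axioms of Definition~\ref{def:morphism} for the data $\Psi\circ\Phi$ defined by \eqref{eqn:composition}. The Grading axiom is immediate: if a summand $\Phi_{d_1}(x_1,x_2)\times_{S_2(x_2)}\Psi_{d_2}(x_2,x_3)$ with $d_1+d_2=d+1$ is nonempty, then \eqref{eqn:gradingphi} for $\Phi$ and $\Psi$ gives $d_1\equiv|x_1|_1-|x_2|_2+1$ and $d_2\equiv|x_2|_2-|x_3|_3+1$, so $d=d_1+d_2-1\equiv|x_1|_1-|x_3|_3+1\pmod 2$. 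Finiteness follows from Finiteness for $\Phi$ and $\Psi$. For Fiber Product Transversality, substitute \eqref{eqn:composition} into the fiber products $M^1_d(x_1',x_1)\times_{S_1(x_1)}(\Psi\circ\Phi)_{d'}(x_1,x_3)$ and $(\Psi\circ\Phi)_{d'}(x_1,x_3)\times_{S_3(x_3)}M^3_d(x_3,x_3')$ with $d+d'\le 3$; by associativity \eqref{eqn:fpna} these become the triple fiber products of total dimension $d+d_1+d_2=d+d'+1\le 4$ appearing in part (b) of Definition~\ref{def:composable}, which are cut out transversely by hypothesis. Part (a) of the Compactification axiom, finiteness of $(\Psi\circ\Phi)_0(x_1,x_3)$, holds because each summand $\Phi_{d_1}(x_1,x_2)\times_{S_2(x_2)}\Psi_{d_2}(x_2,x_3)$ with $d_1+d_2=1$ has a finite factor, and the emptiness clauses in part (c) of Definition~\ref{def:composable} then give e.g.\ $\Phi_0(x_1,x_2)\times_{S_2(x_2)}\Psi_1(x_2,x_3)=\Phi_0(x_1,x_2)\times_{S_2(x_2)}\overline{\Psi}_1(x_2,x_3)$, which is compact; together with Finiteness this makes $(\Psi\circ\Phi)_0(x_1,x_3)$ a finite union of compact $0$-manifolds.

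The substance is in parts (b)--(d) of the Compactification axiom. The plan is to take
\[
\overline{(\Psi\circ\Phi)}_1(x_1,x_3)=\coprod_{\substack{x_2\in X_2\\ d_1+d_2=2}}\overline{\Phi}_{d_1}(x_1,x_2)\times_{S_2(x_2)}\overline{\Psi}_{d_2}(x_2,x_3),
\]
and likewise for the point-constrained moduli spaces in parts (c) and (d), with the point constraint carried along on the appropriate factor. Here the $0$-dimensional factors are already compact; and the factors $\Phi_2,\Phi_3,\Psi_2,\Psi_3$, which carry no compactification of their own, appear only in summands whose other factor is a finite set, so by \eqref{eqn:fpls} such a summand is a finite disjoint union of \emph{point-constrained} moduli spaces $\Phi_2(x_1,x_2,q)$, $\Psi_3(x_2,q',x_3,p_3)$, and so forth, whose compactifications are supplied by parts (c) and (d) of the Compactification axiom for $\Phi$ and $\Psi$. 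The points $q$ arising this way are evaluation-map images of points of $0$-dimensional moduli spaces, hence not a priori generic; what makes the Compactification axioms for $\Phi$ and $\Psi$ applicable at these points --- namely the regular-value and transversality conditions on the pertinent fiber products, and the absence of extra intersection points --- is precisely the content of parts (c), (d), and (e) of Definition~\ref{def:composable}.

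It then remains to compute the boundary. By the fiber-product boundary formula \eqref{eqn:bfp} and associativity \eqref{eqn:fpna}, the boundary of the compactification above is a disjoint union, over $x_2$ and the relevant $(d_1,d_2)$, of terms $(\partial\overline{\Phi}_{d_1})\times_{S_2(x_2)}\overline{\Psi}_{d_2}$ and $\pm\,\overline{\Phi}_{d_1}\times_{S_2(x_2)}(\partial\overline{\Psi}_{d_2})$. Substituting the boundary identifications \eqref{eqn:compactiphi1}, \eqref{eqn:compactiphi2-}, \eqref{eqn:compactiphi2+}, \eqref{eqn:compactiphi3} for $\Phi$ and $\Psi$ and reassociating, each term containing a moduli space $M^2_*$ of $A_2$ appears in exactly two summands, once as a degeneration of the $\Phi$-factor and once as a degeneration of the $\Psi$-factor, with opposite signs, and cancels --- this is the cancellation underlying $\partial^2=0$ for a differential assembled from such moduli spaces. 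The surviving terms involve only $M^1_*$ or $M^3_*$, and reassemble via \eqref{eqn:composition} into exactly the right-hand sides of \eqref{eqn:compactiphi1}--\eqref{eqn:compactiphi3} for $\Psi\circ\Phi$, with the signs prescribed there. Finally, part (e) of the Compactification axiom for $\Psi\circ\Phi$ --- no extra points on those right-hand sides --- becomes, after expanding via \eqref{eqn:composition}, exactly the emptiness of the triple fiber products listed in parts (d) and (e) of Definition~\ref{def:composable}.

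I expect the main obstacle to be the sign bookkeeping in the last step: organizing the boundary of a fiber product of compactifications into ``external'' contributions (containing $M^1_*$ or $M^3_*$) and ``internal'' contributions (containing $M^2_*$), identifying the two summands of $\overline{(\Psi\circ\Phi)}_1$ in which each internal contribution appears --- related by sliding the intermediate object in $X_2$ across the inserted $A_2$-moduli space --- and verifying that the signs coming from \eqref{eqn:compactiphi1}--\eqref{eqn:compactiphi3}, from \eqref{eqn:bfp}, and from reassociation via \eqref{eqn:fpna} combine so that these pairs cancel while the leftover external signs match those prescribed for $\Psi\circ\Phi$. A secondary but genuine point, flagged in the second paragraph, is to set up the compactifications of the ``mixed'' summands (those with a $\Phi_2$, $\Phi_3$, $\Psi_2$, or $\Psi_3$ factor) consistently, and to confirm that parts (c)--(e) of Definition~\ref{def:composable} really do license the Compactification axioms for $\Phi$ and $\Psi$ at the non-generic points arising as evaluation-map images of $0$-dimensional factors.
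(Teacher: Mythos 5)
Your verification of Grading, Finiteness, and Fiber Product Transversality, your argument for part (a) of Compactification, and your handling of the mixed $(0,2)$ and $(2,0)$ summands by rewriting them as point-constrained moduli spaces $\overline{\Psi}_2(x_2,e_-(u_1),x_3)$ etc.\ all agree in substance with the paper's proof (with the minor caveat that the transversality needed at those non-generic constraint points is supplied chiefly by parts (a) and (b) of Definition~\ref{def:composable}, the emptiness clauses (c)--(e) being what rules out contributions from $\overline{\Phi}_1\setminus\Phi_1$ and $\overline{\Psi}_1\setminus\Psi_1$).

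The gap is in how you finish parts (b)--(d). The Compactification axiom demands a compact topological $1$-manifold whose boundary is \emph{identified, as an oriented $0$-manifold}, with the right-hand side of \eqref{eqn:compactiphi1}; it is not enough that signed counts of boundary points cancel. If you take $\overline{(\Psi\circ\Phi)}_1(x_1,x_3)$ to be the disjoint union $\coprod\overline{\Phi}_{d_1}\times_{S_2(x_2)}\overline{\Psi}_{d_2}$ as stated, its boundary literally contains every ``internal'' point (those with an $M^2_*$ factor, the set $E_2$ of \eqref{eqn:E123}) twice, with opposite signs --- these points do not disappear, so the required boundary identification fails for that space. The paper's proof takes your disjoint union only as a \emph{preliminary} compactification, catalogs its boundary, checks that each $E_2$ point has exactly two preimages with opposite orientations, and then \emph{glues} those two boundary points together; the quotient is the actual compactification, and the glued points become interior ``extra points'' of $\overline{(\Psi\circ\Phi)}_1\setminus(\Psi\circ\Phi)_1$ that are not boundary points. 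This geometric identification step is exactly why the paper's notion of compactification in \S\ref{sec:conventions} allows such extra points (the $[0,2]$ versus $(0,1)\cup(1,2)$ example), and the paper remarks there that this allowance exists precisely so that composition of morphisms works. Your phrase ``and cancels --- this is the cancellation underlying $\partial^2=0$'' is the algebraic shadow of this gluing, but without performing the identification (and checking its orientation-compatibility, plus excluding the corner points $(\overline{\Phi}_1\setminus\Phi_1)\times_{S_2}(\overline{\Psi}_1\setminus\Psi_1)$ via the third clause of part (c) of Definition~\ref{def:composable}) you have not produced a compactification satisfying the axiom.
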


\begin{proof}
The Grading and Finiteness properties for $\Psi\circ\Phi$ follow from the Grading and Finiteness properties for $\Phi$ and $\Psi$.

The Fiber Product Transversality property for $\Psi\circ\Phi$ follows from part (b) of the assumption that $\Phi$ and $\Psi$ are composable.

To prove the Compactification property for $\Psi\circ\Phi$, let $x_1\in X_1$ and $x_3\in X_3$. We need to prove parts (a)--(e) of the Compactification property for $x_1$ and $x_3$.

(a) We need to prove that $(\Psi\circ\Phi)_0(x_1,x_3)$ is finite. Suppose to get a contradiction that $(\Psi\circ\Phi)_0(x_1,x_3)$ contains an infinite sequence of distinct elements $\{(u_1^i,u_2^i)\}_{i=1,\ldots}$. By the definition of $(\Psi\circ\Phi)_0(x_1,x_3)$, for each $i$, there is an element $x_2^i\in X_2$, and a pair of integers $(d_1^i,d_2^i)$ equal to $(1,0)$ or $(0,1)$, such that
\[
(u_1^i,u_2^i)\in \Phi_{d_1^i}(x_1,x_2^i)\times_{S_2(x_2^i)} \Psi_{d_2^i}(x_2^i,x_3).
\]
By the Finiteness property for $\Phi$ applied to $x_1$, we can pass to a subsequence such that all of the $x_2^i$ are equal to a single element $x_2\in X_2$. We can also pass to a further subsequence so that the $d_1^i$ are all equal to a single integer $d_1\in\{0,1\}$. Without loss of generality, $d_1=1$. Thus for all $i$ we have
\[
(u_1^i,u_2^i)\in \Phi_1(x_1,x_2)\times_{S_2(x_2)} \Psi_0(x_2,x_3).
\]
Since $\Psi_0(x_2,x_3)$ is finite, which we know by the Compactification property for $\Psi$, we can pass to a subsequence so that all of the $u_2^i$ are equal to a single element $u_2\in \Psi_0(x_2,x_3)$.  
By passing to a further subsequence, we can assume that the sequence $\{u_1^i\}_{i=1,\ldots}$ converges to a point $u_1^\infty$ in the compactification $\overline{\Phi}_1(x_1,x_2)$, which is provided by the Compactification property for $\Phi$. By part (c) of the assumption that $\Phi$ and $\Psi$ are composable, we cannot have $u_1^\infty\in\overline{\Phi}_1(x_1,x_2)\setminus\Phi_1(x_1,x_2)$. 
So $u_1^\infty\in \Phi_1(x_1,x_2)$.

By part (a) of the assumption that $\Phi$ and $\Psi$ are composable, the fiber product $\Phi_1(x_1,x_2)\times_{S(x_2)} \Psi_0(x_2,x_3)$ is cut out transversely, so the point $(u_1^\infty,u_2)$ is isolated in this fiber product. This contradicts the fact that there is a sequence of distinct points $(u_1^i,u_2)$ in the fiber product converging to it.

(b) We need to construct the compactification $\overline{(\Psi\circ\Phi)}_1(x_1,x_3)$. By definition,
\begin{equation}
\label{eqn:m113}
(\Psi\circ\Phi)_1(x_1,x_3)=\coprod_{\substack{x_2\in X_2 \\ d_1+d_2=2}} \Phi_{d_1}(x_1,x_2)\times_{S_2(x_2)} \Psi_{d_2}(x_2,x_3).
\end{equation}
We first define a preliminary compactification $\widetilde{\Psi\circ\Phi}_1(x_1,x_3)$, which is not the compactification we want; the latter will be obtained from the former by identifying some boundary points. The preliminary compactification is
\begin{equation}
\label{eqn:precomp}
\widetilde{(\Psi\circ\Phi)}_1(x_1,x_3)=\coprod_{\substack{x_2\in X_2 \\ d_1+d_2=2}} \overline{\Phi_{d_1}(x_1,x_2)\times_{S_2(x_2)} \Psi_{d_2}(x_2,x_3)},
\end{equation}
where the right hand side is to be interpreted as follows.
 When $(d_1,d_2)=(1,1)$, we set
\begin{equation}
\label{eqn:precomp11}
\overline{\Phi_{1}(x_1,x_2)\times_{S_2(x_2)} \Psi_{1}(x_2,x_3)} = \overline{\Phi}_1(x_1,x_2) \times_{S_2(x_2)} \overline{\Psi}_1(x_2,x_3).
\end{equation}
It follows from parts (a) and (c) of the assumption that $\Phi$ and $\Psi$ are composable that this is a topological $1$-manifold with boundary. When $(d_1,d_2)=(0,2)$, we cannot make an analogous definition because $\overline{\Psi}_2(x_2,x_3)$ is not defined. Instead, by \eqref{eqn:fpls} we can write
\[
\Phi_0(x_1,x_2)\times_{S_2(x_2)} \Psi_2(x_2,x_3) = \coprod_{u_1\in \Phi_0(x_1,x_2)}\varepsilon(u_1) \Psi_2(x_2,e_-(u_1),x_3).
\]
Here $\varepsilon(u_1)\in\{\pm1\}$ denotes the orientation of the point $u_1$ in $\Phi_0(x_1,x_2)$. We then define
\begin{equation}
\label{eqn:precomp02}
\overline{\Phi_0(x_1,x_2)\times_{S_2(x_2)} \Psi_2(x_2,x_3)} = \coprod_{u_1\in \Phi_0(x_1,x_2)}\varepsilon(u_1) \overline{\Psi}_2(x_2,e_-(u_1),x_3).
\end{equation}
The case $(d_1,d_2)=(2,0)$ is handled analogously.

To see that the preliminary compactification \eqref{eqn:precomp} is compact, note that by the finiteness property for $\Phi$, only finitely many triples $(x_2,d_1,d_2)$ give nonempty contributions to the right hand side of \eqref{eqn:precomp}. When $(d_1,d_2)=(1,1)$, the contribution \eqref{eqn:precomp11} is by definition compact. When $(d_1,d_2)=(0,2)$, the contribution \eqref{eqn:precomp02} is compact because $\Phi_0(x_1,x_2)$ is finite and $\overline{\Psi}_2(x_2,e_-(u_1),x_3)$ is compact by the Compactification properties for $\Phi$ and $\Psi$. Likewise, each contribution with $(d_1,d_2)=(2,0)$ is compact.

To proceed from the preliminary compactification to the actual compactification, consider the following three oriented $0$-manifolds:
\begin{equation}
\label{eqn:E123}
\begin{split}
E_1 &= \coprod_{\substack{x_1'\in X_1\setminus\{x_1\}\\x_2\in X_2\\d_1+d_2+d_3=2}}
 M_{d_1}^1(x_1,x_1') \times_{S_1(x_1')} \Phi_{d_2} (x_1',x_2) \times_{S_2(x_2)} \Psi_{d_3}(x_2,x_3),\\
E_2 &= \coprod_{\substack{x_2^+\neq x_2^-\in X_2\\d_1+d_2+d_3=2}} \Phi_{d_1}(x_1,x_2^+)\times_{S_2(x_2^+)} M_{d_2}^2(x_2^+,x_2^-)\times_{S_2(x_2^-)} \Psi_{d_3}(x_2^-,x_3),\\
E_3 &= \coprod_{\substack{x_2\in X_2 \\ x_3'\in X_3\setminus\{x_3\} \\ d_1+d_2+d_3=2}}
(-1)^{d_3-1}
\Phi_{d_1}(x_1,x_2) \times_{S_2(x_2)} \Psi_{d_2}(x_2,x_3') \times_{S_3(x_3')} M_{d_3}^3(x_3',x_3).
\end{split}
\end{equation}

We claim now that there is a map
\begin{equation}
\label{eqn:endsmap}
\phi: \partial\left(\widetilde{(\Psi\circ\Phi)}_1(x_1,x_3)\right) \longrightarrow E_1\sqcup E_2\sqcup E_3
\end{equation}
with the following properties:
\begin{description}
\item{(i)}
Each point in $E_1\sqcup E_3$ has exactly one inverse image under $\phi$. Each point in $E_1\sqcup E_3$ has the same orientation as its inverse image under $\phi$.
\item{(ii)}
Each point in $E_2$ has exactly two inverse images under $\phi$, 
and these two inverse images have opposite orientations.
\end{description}

Assuming (i) and (ii), we define
\[
\overline{(\Psi\circ\Phi)}_1(x_1,x_3) = \widetilde{(\Psi\circ\Phi)}_1(x_1,x_3)/\sim,
\]
where the equivalence relation $\sim$ identifies two points if they are on the boundary and $\phi$ maps them to the same point in $E_2$. 
By (i) and (ii), $\overline{(\Psi\circ\Phi)}_1(x_1,x_3)$ is an oriented topological 1-manifold with oriented boundary given by
\[
\partial\left(\overline{(\Psi\circ\Phi)}_1(x_1,x_3)\right) = E_1\sqcup E_3.
\]
This is the correct boundary, since we can rewrite
\[
\begin{split}
E_1 &= \coprod_{\substack{x_1'\in X_1\setminus\{x_1\}\\ d_1+d=1}}
M_{d_1}^1(x_1,x_1')\times_{S_1(x_1')} (\Psi\circ\Phi)_d(x_1',x_3),
\\
E_3 &= \coprod_{\substack{x_3'\in X_3\setminus\{x_3\}\\ d+d_3=1}}
(-1)^{d}
(\Psi\circ\Phi)_d(x_1,x_3') \times_{S_3(x_3')} M_{d_3}^3(x_3',x_3).
\end{split}
\]

To define the map \eqref{eqn:endsmap} and prove (i) and (ii), we now catalog all of the boundary points of $\widetilde{(\Psi\circ\Phi)}_1(x_1,x_3)$.  To shorten the equations, given $x_2\in X_2$ and $d_1,d_2\in\N$ with $d_1+d_2=2$, define
\[
N_{d_1,d_2}(x_2) = \overline{\Phi_{d_1}(x_1,x_2) \times_{S_2(x_2)} \Psi_{d_2}(x_2,x_3)}.
\]
The preliminary compactification is the disjoint union of the compact oriented $1$-manifolds $N_{d_1,d_2}(x_2)$. 

By equation \eqref{eqn:bfp} and part (b) of the Compactification property for $\Phi$ and $\Psi$, we have
\begin{equation}
\label{eqn:ends11}
\begin{split}
\partial N_{1,1}(x_2) =& \coprod_{\substack{x_1'\in X_1\setminus\{x_1\}\\ d_1+d=1}}
M_{d_1}^1(x_1,x_1')\times_{S_1(x_1')} \Phi_{d}(x_1',x_2) \times_{S_2(x_2)} \Psi_1(x_2,x_3) \\
&\bigsqcup 
\coprod_{\substack{x_2'\in X_2\setminus\{x_2\}\\d+d_2=1}}
(-1)^{d_2-1}
\Phi_d(x_1,x_2') \times_{S_2(x_2')} M_{d_2}^2(x_2',x_2) \times_{S_2(x_2)} \Psi_1(x_2,x_3)\\
&\bigsqcup
\coprod_{\substack{x_2'\in X_2\setminus\{x_2\}\\ d_2+d=1}}
\Phi_{1}(x_1,x_2) \times_{S_2(x_2)} M_{d_2}^2(x_2,x_2') \times_{S_2(x_2')} \Psi_{d}(x_2',x_3)\\
&\bigsqcup
\coprod_{\substack{x_3'\in X_3\setminus\{x_3\}\\ d+d_3=1}}
(-1)^{d_3-1}
\Phi_1(x_1,x_2) \times_{S_2(x_2)} \Psi_d(x_2,x_3') \times_{S_3(x_3')} M_{d_3}^3(x_3',x_3).
\end{split}
\end{equation}
Note that a priori, we should use $\overline{\Psi}_1(x_2,x_3)$ instead of $\Psi_1(x_2,x_3)$ in the first two terms on the right hand side, and $\overline{\Phi}_1(x_1,x_2)$ instead of $\Phi_1(x_1,x_2)$ and $\Psi_1(x_2,x_3)$ in the last two terms. However no points in $\overline{\Phi}_1(x_1,x_2)\setminus\Phi_1(x_1,x_2)$ or $\overline{\Psi}_1(x_2,x_3)\setminus\Psi_1(x_2,x_3)$ contribute to the corresponding fiber products, by condition (d) in the assumption that $\Phi$ and $\Psi$ are composable.

By equation \eqref{eqn:precomp02} and part (c) of the Compactification property for $\Psi$, we have
\begin{equation}
\label{eqn:ends02}
\begin{split}
\partial N_{0,2}(x_2) =& 
\coprod_{\substack{x_2'\in X_2\setminus\{x_2\}\\ d_2+d=2}}
-
\Phi_0(x_1,x_2) \times_{S_2(x_2)} M^2_{d_2}(x_2,x_2') \times_{S_2(x_2')} \Psi_{d}(x_2',x_3) \\
& \bigsqcup \coprod_{\substack{x_3'\in X_3\setminus\{x_3\}\\ d+d_3=2}} 
(-1)^{d_3-1}
\Phi_0(x_1,x_2) \times_{S_2(x_2)} \Psi_{d}(x_2,x_3') \times_{S_3(x_3')} M^3_{d_3}(x_3',x_3).
\end{split}
\end{equation}
Similarly, by part (c) of the Compactification property for $\Phi$, we have
\begin{equation}
\label{eqn:ends20}
\begin{split}
\partial N_{2,0}(x_2) =&
\coprod_{\substack{x_1'\in X_1\setminus\{x_1\}\\ d_1+d=2}}
M^1_{d_1}(x_1,x_1') \times_{S_1(x_1')} \Phi_{d}(x_1',x_2) \times_{S_2(x_2)} \Psi_0(x_2,x_3) \\
& \bigsqcup \coprod_{\substack{x_2'\in X_2\setminus\{x_2\}\\ d+d_2=2}} 
(-1)^{d_2}
\Phi_{d}(x_1,x_2') \times_{S_2(x_2')} M^2_{d_2}(x_2',x_2) \times_{S_2(x_2)} \Psi_0(x_2,x_3).
\end{split}
\end{equation}

Now all of the boundary points of $\widetilde{(\Psi\circ\Phi)}_1(x_1,x_3)$ are listed on the right hand sides of \eqref{eqn:ends11}, \eqref{eqn:ends02}, and \eqref{eqn:ends20}. Each element of the right hand side of one of these three equations in which the symbol $x_k'$ appears corresponds to a point in $E_k$. This defines the map \eqref{eqn:endsmap}.

To prove (i) and (ii), we need to count how many times each point in \eqref{eqn:E123} appears on the right hand side of \eqref{eqn:ends11}, \eqref{eqn:ends02}, or \eqref{eqn:ends20}, as $x_2$ ranges over $X_2$, and compare orientations. Note that the fiber products in \eqref{eqn:E123} are empty when $(d_1,d_2,d_3)$ equals $(2,0,0)$ or $(0,0,2)$. The remaining possibilities for $(d_1,d_2,d_3)$ are $(1,1,0)$, $(1,0,1)$, $(0,1,1)$, and $(0,2,0)$. We then see by inspection that each point in $E_1$ or $E_3$ appears exactly once on the right hand side of \eqref{eqn:ends11}, \eqref{eqn:ends02}, or \eqref{eqn:ends02}, with the same sign as in \eqref{eqn:E123}. On the other hand, each point in $E_2$ appears exactly twice on the right hand side of \eqref{eqn:ends02}, \eqref{eqn:ends02}, or \eqref{eqn:ends20}, once with $x_2=x_2^+$ and once with $x_2=x_2^-$, and these two appearances have opposite signs.

Parts (c) and (d) of the Compactification property are proved by the same argument as part (b), but with point constraints at $p_+$ and/or $p_-$ inserted everywhere.

To prove part (e) of the Compactification property, we will just explain \eqref{eqn:compacte1}, as \eqref{eqn:compacte2} is proved symmetrically, and the rest is proved analogously with point constraints at $p_+$ and/or $p_-$ inserted.

To prove the first line of \eqref{eqn:compacte1}, we need to show that
\[
(\overline{M}^1_1(x_1,x_1') \setminus M^1_1(x_1,x_1')) \times_{S_1(x_1')} \Phi_{d_1}(x_1',x_2)\times\Psi_{d_2}(x_2,x_3) = \emptyset
\]
whenever $d_1+d_2=1$. When $d_1=0$ this follows from the fact that $\Phi$ is a morphism. When $d_1=1$ this follows from condition (d) in the definition of composable.

To prove the second line of \eqref{eqn:compacte1}, we need to show that
\begin{equation}
\label{eqn:compacte12}
M^1_0(x_1,x_1') \times_{S_1(x_1')} (\overline{(\Psi\circ\Phi)}_1(x_1',x_3) \setminus (\Psi\circ\Phi)_1(x_1',x_3)) = \emptyset.
\end{equation}
The second factor, $\overline{(\Psi\circ\Phi)}_1\setminus(\Psi\circ\Phi)_1$, consists of points in $(\overline{\Phi}_1\setminus\Phi_1)\times_{S(x_2)}\Psi_1$ and $\Phi_1\times_{S(x_2)}(\overline{\Psi}_1\setminus\Psi_1)$, as well as points as in \eqref{eqn:E123}. In each case, the contributions to the fiber product \eqref{eqn:compacte12} are empty, either by condition (d) in the definition of composable, or by the fact that $\Phi$ is a morphism.
\end{proof}

\begin{remark}
\label{rem:identity}
Our definition of ``identity morphism'' is a slight abuse of terminology, for the following reason. Let $A_1$ and $A_2$ be Morse-Bott systems, let $\Phi$ be a morphism from $A_1$ to $A_2$, and let $I^i$ denote the identity morphism from $A_i$ to itself for $i=1,2$. Then:
\begin{itemize}
\item
$I^1$ and $\Phi$ are not necessarily composable; likewise $\Phi$ and $I^2$ are not necessarily composable. Composability with the identity requires $\Phi$ to satisfy slightly stronger transversality conditions than in the definition of ``morphism''.
\item
Even when composability holds, the compositions $\Phi\circ I^1$ and $I^2\circ\Phi$ are not quite equal to $\Phi$; the moduli spaces for the compositions are larger than the moduli spaces for $\Phi$ because of additional contributions coming from \eqref{eqn:identityextra}. For example, for $x_1,x_2$ distinct we have
\[
\begin{split}
(I^2\circ\Phi)_1(x_1,x_2) =& \Phi_1(x_1,x_2)\\
& \bigsqcup \R\times\coprod_{x_2'\in X_2\setminus\{x_2\}} \Phi_1(x_1,x_2')\times_{S(x_2')} M^2_0(x_2',x_2).
\end{split}
\]
The compactification $\overline{(I^2\circ\Phi)}_1(x_1,x_2)$ then includes an extra piece
\begin{equation}
\label{eqn:cep}
\overline{\R} \times\coprod_{x_2'\in X_2\setminus\{x_2\}} \Phi_1(x_1,x_2')\times_{S(x_2')} M^2_0(x_2',x_2).
\end{equation}
The evaluation maps are constant on each component of \eqref{eqn:cep}. In the construction in Proposition~\ref{prop:composition}, each component of \eqref{eqn:cep} is glued onto the corresponding boundary point of $\overline{\Phi}_1(x_1,x_2)$ at the point where the $\overline{\R}$ coordinate is $-\infty$.
\end{itemize}
\end{remark}

\begin{remark}
Although we will not need this, one can also show that the composition of three morphisms is associative, assuming that all morphisms and pairwise compositions in question are composable. This follows from \eqref{eqn:composition} and the associativity of fiber product, together with a check that the compactifications agree.
\end{remark}

\subsection{Homotopies of morphisms}

\begin{definition}
\label{def:homotopy}
Let $A_1=(X_1,\|\cdot\|_1,S_1,\mc{O}^1,M_*^1,e_\pm^1)$ and $A_2=(X_2,\|\cdot\|_2,S_2,\mc{O}^2,M_*^2,e_\pm^2)$ be Morse-Bott systems. Let $\Phi$ and $\Phi'$ be morphisms from $A_1$ to $A_2$. A {\em homotopy\/} $K$ from $\Phi$ to $\Phi'$ consists of the following data for each $x_1\in X_1$, $x_2\in X_2$, and $d\in\{0,1,2,3\}$:
\begin{itemize}
\item
A ``moduli space'' $K_d(x_1,x_2)$, which is a smooth manifold of dimension $d$.
\item
``Evaluation maps'', which are smooth maps
\[
\begin{split}
e_+: K_d(x_1,x_2) &\longrightarrow S_1(x_1),\\
e_-: K_d(x_1,x_2) &\longrightarrow S_2(x_2).
\end{split}
\]
\item
An orientation of $K_d(x_1,x_2)$ with values in $e_+^*\mc{O}^1_{x_1}\tensor e_-^*\mc{O}^2_{x_2}$. 
\end{itemize}
These are required to satisfy the following Grading, Finiteness, Fiber Product Transversality, and Compactification properties:
\begin{description}
\item{(Grading)}
If $K_d(x_1,x_2)$ is nonempty, then
\[
d \equiv |x_1|_1 - |x_2|_2 + 2 \mod 2.
\]
\item{(Finiteness)}
For each $x_1\in X_1$, there exist only finitely many $x_2\in X^2$ such that $K_d(x_1,x_2)$ is nonempty for some $d\in\{0,1,2,3\}$.
\item{(Fiber Product Transversality)}
This condition is the same as in the definition of ``morphism'', but with $\Phi_d$ replaced by $K_d$.
\item{(Compactification)}
Let $x_1\in X^1$ and $x_2\in X^2$. Let $(p_1,p_2)\in S_1(x_1)\times S_2(x_2)$ be generic. Define $K_d(x_1,p_1,x_2)$, $K_d(x_1,x_2,p_2)$, and $K_d(x_1,p_1,x_2,p_2)$ as in \eqref{eqn:Mdconstrained}. Suppose that $p_1$ is a regular value of all evaluation maps $e_+$ on $M^1_d$, $\Phi_d$, $\Phi'_d$, and $K_d$ for $d\le 2$; $p_2$ is a regular value of all evaluation maps $e_-$ on $M^2_d$, $\Phi_d$, $\Phi'_d$, and $K_d$ for $d\le 2$; and $(p_1,p_2)$ is a regular value of $e_+\times e_-$ on $\Phi_d(x_1,x_2)$, $\Phi'_d(x_1,x_2)$, and $K_d(x_1,x_2)$ for $d\le 3$.
Then:
\begin{description}
\item{(a)}
The moduli space $K_0(x_1,x_2)$ is compact, i.e.\ finite.
\item{(b)} $K_1(x_1,x_2)$ has a compactification $\overline{K}_1(x_1,x_2)$, whose boundary has an identification
\begin{equation}
\label{eqn:compactopy1}
\begin{split}
\partial \overline{K}_1(x_1,x_2) =& -\Phi_0(x_1,x_2) \bigsqcup \Phi'_0(x_1,x_2)\\
& \bigsqcup \coprod_{\substack{x_1'\in X^1\setminus\{x_1\}\\d_1+d=1}}
(-1)^{d_1} M_{d_1}^1(x_1,x_1') \times_{S_1(x_1')} K_d(x_1',x_2)\\
& \bigsqcup \coprod_{\substack{x_2'\in X^2\setminus\{x_2\}\\ d+d_2=1}} (-1)^d K_d(x_1,x_2') \times_{S_2(x_2')} M_{d_2}^2(x_2',x_2).
\end{split}
\end{equation}
\item{(c)}
$K_2(x_1,x_2,p_2)$ has a compactification $\overline{K}_2(x_1,x_2,p_2)$, whose boundary has an identification
\begin{equation}
\label{eqn:compactopy2-}
\begin{split}
\partial \overline{K}_2(x_1,x_2,p_2) =& -\Phi_1(x_1,x_2,p_2) \bigsqcup \Phi'_1(x_1,x_2,p_2)\\
& \bigsqcup \coprod_{\substack{x_1'\in X^1\setminus\{x_1\}\\d_1+d=2}} (-1)^{d_1} M_{d_1}^1(x_1,x_1') \times_{S_1(x_1')} K_d(x_1',x_2,p_2)\\
& \bigsqcup \coprod_{\substack{x_2'\in X^2\setminus\{x_2\}\\ d+d_2=2}} (-1)^d K_d(x_1,x_2') \times_{S_2(x_2')} M_{d_2}^2(x_2',x_2,p_2).
\end{split}
\end{equation}
Similarly, $K_2(x_1,p_1,x_2)$ has a compactification $\overline{K}_2(x_1,p_1,x_2)$, whose boundary has an identification
\begin{equation}
\label{eqn:compactopy2+}
\begin{split}
\partial \overline{K}_2(x_1,p_1,x_2) =& \Phi_1(x_1,p_1,x_2) \bigsqcup -\Phi'_1(x_1,p_1,x_2)\\
& \bigsqcup \coprod_{\substack{x_1'\in X^1\setminus\{x_1\}\\d_1+d=2}} (-1)^{d_1-1} M_{d_1}^1(x_1,p_1,x_1') \times_{S_1(x_1')} K_d(x_1',x_2)\\
& \bigsqcup \coprod_{\substack{x_2'\in X^2\setminus\{x_2\}\\ d+d_2=2}} (-1)^{d-1} K_d(x_1,p_1,x_2') \times_{S_2(x_2')} M_{d_2}^2(x_2',x_2).
\end{split}
\end{equation}
\item{(d)} $K_3(x_1,p_1,x_2,p_2)$ has a compactification $\overline{K}_3(x_1,p_1,x_2,p_2)$, whose boundary has an identification
\begin{equation}
\label{eqn:compactopy3}
\begin{split}
\partial \overline{K}_3(x_1,p_1,x_2,p_2) =& \Phi_2(x_1,p_1,x_2,p_2) \bigsqcup -\Phi_2'(x_1,p_1,x_2,p_2)\\
& \bigsqcup \coprod_{\substack{x_1'\in X^1\setminus\{x_1\}\\d_1+d=3}} (-1)^{d_1-1} M_{d_1}^1(x_1,p_1,x_1') \times_{S_1(x_1')} K_d(x_1',x_2,p_2)\\
& \bigsqcup \coprod_{\substack{x_2'\in X^2\setminus\{x_2\}\\ d+d_2=3}} (-1)^{d-1} K_d(x_1,p_1,x_2') \times_{S_2(x_2')} M_{d_2}^2(x_2',x_2,p_2).
\end{split}
\end{equation}
\end{description}
In (b), (c), and (d), the evaluation maps $e_\pm$ extend continuously to the compactifications and satisfy $e_\pm(u_+,u_-)=e_\pm(u_\pm)$.
\begin{description}
\item{(e)} As in part (e) of the Compactification axiom in the definition of Morse-Bott system, the right hand sides of \eqref{eqn:compactopy1}, \eqref{eqn:compactopy2-}, \eqref{eqn:compactopy2+} and \eqref{eqn:compactopy3} would not include any extra points if we used compactifications.
\end{description}
\end{description}
\end{definition}

\section{Cascade homology}
\label{sec:cascade}

In this section, we define the cascade homology of a Morse-Bott system. We show that cascade homology is functorial with respect to morphisms of Morse-Bott systems, and that the induced maps on cascade homology are invariant under homotopy of morphisms.

\subsection{Setup}
\label{sec:cascadesetup}

Let $A=(X,|\cdot|,S,\mc{O},M_*,e_\pm)$ be a Morse-Bott system. 
We are going to define its cascade homology, which is a $\Z/2$-graded $\Z$-module, denoted by $H_*^\ca(A)$. To do so, we need to make a generic choice of a point $p_x\in S_x$ for each $x\in X$. In particular, we require that:
\begin{itemize}
\item
$p_x$ is a regular value of all evaluation maps
\[
\begin{split}
e_-:M_d(x_+,x)&\longrightarrow S_x,\\
e_+:M_d(x,x_-)&\longrightarrow S_x
\end{split}
\]
for each $x\in X$ and $d\le 2$.
\item
 $(p_{x_+},p_{x_-})$ is a regular value of
\[
e_+\times e_-: M_d(x_+,x_-) \longrightarrow S(x_+)\times S(x_-)
\]
for each pair of distinct points $x_+,x_-\in X$ and each $d\le 3$.
\end{itemize}
We denote the set of choices $\{p_x\}_{x\in X}$ by $\mc{P}$. Below we will define the cascade chain complex $(C_*^\ca(A,\mc{P}),\partial)$. The homology of this chain complex will be denoted by $H_*^\ca(A,\mc{P})$. We will later show that this homology does not depend on $\mc{P}$, and so we can denote it by $H_*^\ca(A)$.

The chain complex $C_*^\ca(A,\mc{P})$ is the sum over $x\in X$ of two copies of $\mc{O}_x(p_x)$. To describe this a bit more conveniently, fix a generator of $\mc{O}_x(p_x)$ for each $x\in S$. Then the chain complex $C_*^\ca(A,\mc{P})$ is freely generated over $\Z$, with two generators for each $x\in X$. We denote these generators by $\widehat{x}$ and $\widecheck{x}$. The mod 2 grading of $\widecheck{x}$ equals $|x|$, while the mod 2 grading of $\widehat{x}$ equals $|x|+1$.

\subsection{Cascade moduli spaces: definition}
\label{sec:cmsd}

To define the differential $\partial$ on the chain complex $C_*^\ca(A,\mc{P})$, we need to introduce cascade moduli spaces. Roughly speaking, we will consider ``cascades'' that start at $\widehat{x}_+$ or $\widecheck{x}_+$ and end at $\widehat{x}_-$ or $\widecheck{x}_-$. When we start at $\widecheck{x}_+$ there is an initial point constraint, and we end at $\widehat{x}_-$ there is a final point constraint. Now for the precise definitions.

\subsubsection{Notation and simple cases}

To define cascade moduli spaces, we need the following notation.
Define the ``cyclic fiber product''
\begin{equation}
\label{eqn:cfp}
M_{d_1}(x_0,x_1)
\underset{S(x_1)}{\circlearrowleft} M_{d_2}(x_1,x_2) \underset{S(x_2)}{\circlearrowleft} \cdots \underset{S(x_{k-1})}{\circlearrowleft} M_{d_{k-1}}(x_{k-1},x_k)
\end{equation}
to be the set of $k$-tuples $(u_1,\ldots,u_k)$ such that:
\begin{itemize}
\item
$u_i\in M_{d_i}(x_{i-1},x_i)$ for each $i=1,\ldots,k$.
\item
For each $i=1,\ldots,k-1$, the points $p_{x_i},e_-(u_i),e_+(u_{i+1})$ are distinct and positively cyclically ordered with respect to the orientation of $S(x_i)$.
\end{itemize}
In \eqref{eqn:cfp}, we can also replace the first factor by $M_{d_1}(x_0,p_{x_0},x_1)$, in which case we require that $u_1\in M_{d_1}(x_0,p_0,x_1)$; and we can replace the last factor by $M_{d_k}(x_{k-1},x_k,p_{x_k})$, in which case we require that $u_k\in M_{d_k}(x_{k-1},x_k,p_{x_k})$. On each of these cylic fiber products, there is an evaluation map $e_+$ with values in $S(x_0)$, and an evaluation map $e_-$ with values in $S(x_k)$.

Also, we define $M_d^*(x_+,x_-)$ to be the set of $u\in M_d(x_+,x_-)$ such that $e_+(u)\neq p_{x_+}$ and $e_-(u)\neq p_{x_-}$. Similarly, we define $M_d^*(x_+,p_+,x_-)$ to be the set of $u\in M_d(x_+,p_+,x_-)$ such that $e_-(u)\neq p_-$; and we define $M_d^*(x_+,x_-,p_-)$ to be the set of $u\in M_d(x_+,x_-,p_{x_-})$ such that $e_+(u)\neq p_{x_+}$.

For $x_+,x_-\in X$ and $d\in\{0,1\}$, we now define four cascade moduli spaces $M^\ca_d(\widehat{x}_+,\widehat{x}_-)$, $M^\ca_d(\widehat{x}_+,\widecheck{x}_-)$, $M^\ca_d(\widecheck{x}_+,\widehat{x}_-)$, and $M^\ca_d(\widecheck{x}_+,\widecheck{x}_-)$. These will be open smooth manifolds of dimension $d$, with orientations valued in $\mc{O}_{x_+}(p_{x_+})\tensor \mc{O}_{x_-}(p_{x_-})$.

The simplest case is where $x_+=x_-$. In this case we define
\begin{equation}
\label{eqn:cascadesimple1}
M^\ca_d(\widehat{x},\widehat{x}) = M^\ca_d(\widecheck{x},\widehat{x}) = M^\ca_d(\widecheck{x},\widecheck{x}) = \emptyset
\end{equation}
and
\begin{equation}
\label{eqn:cascadesimple2}
M^\ca_d(\widehat{x},\widecheck{x}) = \left\{\begin{array}{cl} \{S_x,p_x\}, & \mbox{if $d=0$},\\
\emptyset & \mbox{otherwise}.
\end{array}
\right.
\end{equation}
That is, the set $M^\ca_0(\widehat{x},\widecheck{x})$ has two elements, which we label as $S_x$ and $p_x$. To complete this definition, we need to specify the orientation of $M^\ca_0(\widehat{x},\widecheck{x})$ with values in
\[
\mc{O}_x(p_x) \tensor \mc{O}_x(p_x) = \Z.
\]
That is, we need to attach a sign to each of the two points $S_x$ and $p_x$.  If the local system $\mc{O}_x$ is trivial, then $S_x$ has positive sign and $p_x$ has negative sign.  If the local system $\mc{O}_x$ is not trivial, then $S_x$ and $p_x$ both have negative signs.

\begin{remark}
The above convention is the reason for the $-2$ in equation \eqref{eqn:minustwo}. Combined with the rather natural orientation conventions below, the above convention is necessary for the orientations to work out in Proposition~\ref{prop:cascadekey}(b) below.
\end{remark}

We now define the cascade moduli spaces when $x_+\neq x_-$.

\subsubsection{Unconstrained cascade moduli spaces}

If $x_+\neq x_-$, we define
\begin{equation}
\label{eqn:cascadehatcheck}
\begin{split}
M^\ca_d(\widehat{x}_+,\widecheck{x}_-) =& \coprod_{k\ge 1} \coprod_{x_+=x_0,x_1,\ldots,x_k=x_-} \coprod_{d_1+\cdots +d_k=d}
\\
&M^*_{d_1}(x_0,x_1) \underset{S(x_1)}{\circlearrowleft} M^*_{d_2}(x_1,x_2) \underset{S(x_2)}{\circlearrowleft} \cdots \underset{S(x_{k-1})}{\circlearrowleft} M^*_{d_k}(x_{k-1},x_k).
\end{split}
\end{equation}

The orientation of $M^\ca_d(\widehat{x}_+,\widecheck{x}_-)$ with values in $\mc{O}_{x_+}(p_{x_+})\tensor \mc{O}_{x_-}(p_{x_-})$ is defined as follows. Consider a point
\[
\begin{split}
(u_1,\ldots,u_k) \in& M^*_{d_1}(x_0,x_1) \underset{S(x_1)}{\circlearrowleft} M^*_{d_2}(x_1,x_2) \underset{S(x_2)}{\circlearrowleft} \cdots \underset{S(x_{k-1})}{\circlearrowleft} M^*_{d_k}(x_{k-1},x_k)\\
&\subset M^\ca_d(\widehat{x}_+,\widecheck{x}_-).
\end{split}
\]
At $u_i$, the moduli space $M_{d_i}(x_{i-1},x_i)$ {\em a priori\/} has an orientation with values in $\mc{O}_{x_{i-1}}(e_+(u_i)) \tensor \mc{O}_{x_i}(e_-(u_i))$. The cyclic fiber product is an open subset of the product of these moduli spaces for $i=1,\ldots,k$. Thus taking the product of these orientations in order from $i=1$ to $k$, we obtain an orientation of the cascade moduli space $M^\ca_d(\widehat{x}_+,\widecheck{x}_-)$ at the point $(u_1,\ldots,u_k)$ with values in
\begin{equation}
\label{eqn:apo}
\bigotimes_{i=1}^k \mc{O}_{x_{i-1}}(e_+(u_i)) \tensor \mc{O}_{x_i}(e_-(u_i)).
\end{equation}
Now there is an isomorphism
\begin{equation}
\label{eqn:pt1}
\mc{O}_{x_+}(p_{x_+}) = \mc{O}_{x_0}(p_{x_0})\simeq \mc{O}_{x_0}(e_+(u_1))
\end{equation}
obtained by parallel transport in $\mc{O}_{x_0}$ along a positively oriented path in $S(x_0)$ from $p_{x_0}$ to $e_+(u_1)$. Similarly, for $i=1,\ldots,k=1$ we have an isomorphism
\begin{equation}
\label{eqn:pt2}
\mc{O}_{x_i}(e_-(u_i)) \simeq \mc{O}_{x_i}(e_+(u_{i+1}))
\end{equation}
obtained by parallel transport in $\mc{O}_{x_i}$ along a positively oriented path in $S(x_i)$ from $e_-(u_i)$ to $e_+(u_{i+1})$.
Finally, there is an isomorphism
\begin{equation}
\label{eqn:pt3}
\mc{O}_{x_k}(e_-(u_k))\simeq \mc{O}_{x_k}(p_{x_k}) = \mc{O}_{x_-}(p_{x_-})
\end{equation}
obtained by parallel transport in $\mc{O}_{x_k}$ along a positively oriented path in $S(x_k)$ from $e_-(u_k)$ to $p_{x_k}$.  Combining the isomorphisms \eqref{eqn:pt1}, \eqref{eqn:pt2} and \eqref{eqn:pt3} allows us to identify \eqref{eqn:apo} as
\[
\bigotimes_{i=1}^k \mc{O}_{x_{i-1}}(e_+(u_i)) \tensor \mc{O}_{x_i}(e_-(u_i)) \simeq \mc{O}_{x_+}(p_{x_+}) \tensor \mc{O}_{x_-}(p_{x_-}).
\]
Thus we obtain an orientation of $M^\ca_d(\widehat{x}_+,\widecheck{x}_-)$ at the point $(u_1,\ldots,u_k)$ with values in $\mc{O}_{x_0}(p_{x_0}) \tensor \mc{O}_{x_k}(p_{x_k})$. As $(u_1,\ldots,u_k)$ moves in $M^\ca_d(\widehat{x}_+,\widehat{x}_-)$, this orientation is continuous, because:
\begin{itemize}
\item
Since we are using $M_{d_1}^*(x_0,x_1)$ in \eqref{eqn:cascadehatcheck}, so that $p_{x_0}\neq e_+(u_1)$, the isomorphism \eqref{eqn:pt1} varies continuously.
\item
For $i=1,\ldots, k=1$, since we use the ``cyclic fiber product'' condition $\underset{S(x_i)}{\circlearrowleft}$ in \eqref{eqn:cascadehatcheck}, so that $e_-(u_i)\neq e_+(u_{i+1})$, the isomorphism \eqref{eqn:pt2} varies continuously.
\item
Since we are using $M_{d_k}^*(x_{k-1},x_k)$ in \eqref{eqn:cascadehatcheck}, so that $e_-(u_k)\neq p_{x_k}$, the isomorphism \eqref{eqn:pt3} varies continuously.
\end{itemize}

\subsubsection{Constrained cascade moduli spaces}

Let $x_+,x_-\in X$ be distinct. We define
\begin{equation}
\label{eqn:cascadecheckcheck}
\begin{split}
M^\ca_d(\widecheck{x}_+,\widecheck{x}_-) =& \coprod_{k\ge 1} \coprod_{x_+=x_0,x_1,\ldots,x_k=x_-} \coprod_{d_1+\cdots +d_k=d+1}
\\
&M_{d_1}^*(x_0,p_{x_0},x_1) \underset{S(x_1)}{\circlearrowleft} M^*_{d_2}(x_1,x_2) \underset{S(x_2)}{\circlearrowleft} \cdots \underset{S(x_{k-1})}{\circlearrowleft} M^*_{d_k}(x_{k-1},x_k).
\end{split}
\end{equation}
This is oriented as in \eqref{eqn:cascadehatcheck}, except that now we can dispense with the isomorphism \eqref{eqn:pt1}. We define
\begin{equation}
\label{eqn:cascadehathat}
\begin{split}
M^\ca_d(\widehat{x}_+,\widehat{x}_-) =& \coprod_{k\ge 1} \coprod_{x_+=x_0,x_1,\ldots,x_k=x_-} \coprod_{d_1+\cdots +d_k=d+1}
\\
&M^*_{d_1}(x_0,x_1) \underset{S(x_1)}{\circlearrowleft} M_{d_2}^*(x_1,x_2) \underset{S(x_2)}{\circlearrowleft} \cdots \underset{S(x_{k-1})}{\circlearrowleft} M_{d_k}^*(x_{k-1},x_k,p_{x_k}).
\end{split}
\end{equation}
This is oriented as in \eqref{eqn:cascadehatcheck}, except that here we can dispense with the isomorphism \eqref{eqn:pt3}. Finally, we define
\begin{equation}
\label{eqn:cascadecheckhat}
\begin{split}
M^\ca_d(\widecheck{x}_+,\widehat{x}_-) = &
M_{d+2}(x_+,p_{x_+},x_-,p_{x_-})
\\
& \bigsqcup \coprod_{k\ge 2} \coprod_{x_+=x_0,x_1,\ldots,x_k=x_-} \coprod_{d_1+\cdots +d_k=d+2}
\\
&M_{d_1}^*(x_0,p_{x_0},x_1) 
\underset{S(x_1)}{\circlearrowleft} M_{d_2}^*(x_1,x_2) \underset{S(x_2)}{\circlearrowleft} \cdots \underset{S(x_{k-1})}{\circlearrowleft}
 M_{d_k}^*(x_{k-1},x_k,p_{x_k}).
\end{split}
\end{equation}
This is oriented as in \eqref{eqn:cascadehatcheck}, except that now we can dispense with the isomorphisms \eqref{eqn:pt1} and \eqref{eqn:pt3}.

\subsection{Cascade moduli spaces: the key property}

Recall from \S\ref{sec:cascadesetup} that $\mc{P}$ denotes the set of choices $\{p_x\}_{x\in X}$.

\begin{proposition}
\label{prop:cascadekey}
Let $x_+,x_-\in X$. Fix $\widetilde{x}_+$ to denote one of $\widehat{x}_+$ or $\widecheck{x}_+$; and fix $\widetilde{x}_-$ to denote one of $\widehat{x}_-$ or $\widecheck{x}_-$. If $\mc{P}$ is generic then:
\begin{description}
\item{(a)}
The cascade moduli space $M^\ca_0(\widetilde{x}_+,\widetilde{x}_-)$
is finite.
\item{(b)}
The cascade moduli space $M^\ca_1(\widetilde{x}_+,\widetilde{x}_-)$ has a compactification $\overline{M}^\ca_1(\widetilde{x}_+,\widetilde{x}_-)$ with boundary
\begin{equation}
\label{eqn:caco}
\begin{split}
\partial \overline{M}^\ca_1(\widetilde{x}_+,\widetilde{x}_-) = & \coprod_{y\in X}M^\ca_0(\widetilde{x}_+,\widecheck{y}) \times M^\ca_0(\widecheck{y},\widetilde{x}_-)\\
&\bigsqcup \coprod_{y\in X} M^\ca_0(\widetilde{x}_+,\widehat{y}) \times M^\ca_0(\widehat{y},\widetilde{x}_-).
\end{split}
\end{equation}
In \eqref{eqn:caco}, the boundary orientation on the left hand side agrees with the product orientation on the right hand side.
\end{description}
\end{proposition}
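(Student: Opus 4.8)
The plan is to dispose of the case $x_+=x_-$ with the explicit formulas, and, for $x_+\neq x_-$, to build the compactification of $M^\ca_1$ by taking closures of the individual cyclic fiber products and gluing them along every degeneration except the ones in which an evaluation point runs into a basepoint $p_{x_i}$. When $x_+=x_-$, the cascade moduli spaces are given directly by \eqref{eqn:cascadesimple1} and \eqref{eqn:cascadesimple2}: $M^\ca_0(\widetilde{x}_+,\widetilde{x}_-)$ has $0$ or $2$ elements, so (a) holds, and $M^\ca_1(\widetilde{x}_+,\widetilde{x}_-)=\emptyset$, so for (b) it is enough to see that the right-hand side of \eqref{eqn:caco} is empty; the $y=x$ terms vanish by \eqref{eqn:cascadesimple1} and \eqref{eqn:cascadesimple2}, and for $y\neq x$ the Grading axiom forbids $M^\ca_0(\widetilde{x},\widetilde{y})$ and $M^\ca_0(\widetilde{y},\widetilde{x})$ from being simultaneously nonempty. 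So assume $x_+\neq x_-$.

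For part (a): by definition $M^\ca_0(\widetilde{x}_+,\widetilde{x}_-)$ is the disjoint union, over tuples $\sigma=(k,x_0,\ldots,x_k)$ of distinct elements with $x_0=x_+$, $x_k=x_-$, of $0$-dimensional cyclic fiber products whose factors are the moduli spaces $M_{d_i}(x_{i-1},x_i)$, with a point constraint at $p_{x_0}$ on the first factor when $\widetilde{x}_+=\widecheck{x}_+$ and at $p_{x_k}$ on the last factor when $\widetilde{x}_-=\widehat{x}_-$. Only finitely many $\sigma$ contribute, by the Finiteness axiom. For a fixed $\sigma$, genericity of $\mc{P}$ makes any point-constrained factor with $d_i=0$ empty (a finite set $M_0(\cdot)$ misses a generic point), so the contributing $\sigma$ have their point-constrained factors of dimension $d_i\geq1$; each such factor is then the preimage of a generic regular value of an evaluation map inside a moduli space with compact closure (Compactification axiom (b)--(d)), hence a compact $0$-manifold, hence finite, while the unconstrained factors are finite by Compactification axiom (a). A subset of a finite product of finite sets is finite.

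For part (b), write $M^\ca_1(\widetilde{x}_+,\widetilde{x}_-)=\coprod_\sigma\mc{C}_\sigma$, a disjoint union of open oriented $1$-manifolds, only finitely many nonempty. For each $\sigma$ I form a naive compactification $\overline{\mc{C}}_\sigma$ by replacing each factor's moduli space by its compactification from the Compactification axiom (with point constraints inserted as in Convention~\ref{convention:constraints}) and closing up the cyclic-ordering inequalities; by genericity of $\mc{P}$, the Fiber Product Transversality axiom, and parts (b)--(e) of the Compactification axiom, $\overline{\mc{C}}_\sigma$ is a compact oriented topological $1$-manifold with boundary to which the evaluation maps extend continuously, and its boundary consists of configurations of three types: (I) some $e_-(u_i)$ or $e_+(u_{i+1})$ — or, at the ends, $e_+(u_1)$ when $\widetilde{x}_+=\widehat{x}_+$ or $e_-(u_k)$ when $\widetilde{x}_-=\widecheck{x}_-$ — collides with the basepoint $p_{x_i}$; (II) $e_-(u_i)=e_+(u_{i+1})$ at a point other than $p_{x_i}$; (III) some factor $u_i$ converges to a boundary point of $\overline{M}_{d_i}(x_{i-1},x_i)$, i.e.\ to an element of a fiber product $M_{d_+}(x_{i-1},z)\times_{S(z)}M_{d_-}(z,x_i)$.

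The heart of the argument is that, as $\sigma$ ranges over all contributing tuples, the Type (II) and Type (III) boundary points are matched in canonical pairs: a Type (II) point of $\overline{\mc{C}}_\sigma$ at $S(x_i)$ merges $u_i$ and $u_{i+1}$ into an element of $M_{d_i}(x_{i-1},x_i)\times_{S(x_i)}M_{d_{i+1}}(x_i,x_{i+1})\subset\partial\overline{M}_{d_i+d_{i+1}}(x_{i-1},x_{i+1})$, which is a Type (III) point of $\overline{\mc{C}}_{\sigma'}$ with $\sigma'$ obtained by deleting $x_i$, and conversely; here the total dimension $1$ gives $d_i+d_{i+1}\leq3$ so the Compactification axiom applies, and the Finiteness axiom (which forbids a chain $z\to\cdots\to z$) ensures the deleted or inserted manifold is never already present, so $\sigma'$ is again a legal tuple. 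Gluing the $\overline{\mc{C}}_\sigma$ along these matched pairs produces a compact oriented topological $1$-manifold $\overline{M}^\ca_1(\widetilde{x}_+,\widetilde{x}_-)$ — with $M^\ca_1$ as an open dense subset of finite complement — whose remaining boundary is the set of Type (I) points; writing $y:=x_i$ for the manifold involved, unwinding the definitions identifies a collision $e_+(u_{i+1})\to p_{x_i}$ with a point of $M^\ca_0(\widetilde{x}_+,\widecheck{y})\times M^\ca_0(\widecheck{y},\widetilde{x}_-)$ and a collision $e_-(u_i)\to p_{x_i}$ with a point of $M^\ca_0(\widetilde{x}_+,\widehat{y})\times M^\ca_0(\widehat{y},\widetilde{x}_-)$, the end collisions giving the $y=x_\pm$ terms of \eqref{eqn:caco} in which the two-point spaces $\{S_{x_\pm},p_{x_\pm}\}$ of \eqref{eqn:cascadesimple2} appear. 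It remains to verify the orientations: that the Type (II)--(III) matching reverses boundary orientations, so the glued object is oriented, and that the boundary orientation at each Type (I) point agrees with the product orientation on \eqref{eqn:caco}. These are sign computations with the fiber-product rules of \S\ref{sec:conventions}, the signs in the Compactification axiom, and the parallel-transport conventions of \S\ref{sec:cmsd}; I expect this orientation bookkeeping — in particular the $y=x_\pm$ terms, where the signs attached to $S_{x_\pm}$ and $p_{x_\pm}$ in \eqref{eqn:cascadesimple2} are exactly the ones that make it work — to be the main obstacle, and it is precisely this computation that forces those sign conventions, and hence the $-2$ in \eqref{eqn:minustwo}.
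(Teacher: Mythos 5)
Your overall strategy for $x_+\neq x_-$ is the same as the paper's: decompose the one-dimensional cascade space into strata, compactify, observe that the degenerations are of three kinds (evaluation hits a basepoint, adjacent evaluations collide, a factor breaks into a fiber product), glue the second kind to the third across strata, and read off the remaining basepoint-collision ends as the right-hand side of \eqref{eqn:caco}. But there are two genuine gaps. First, your disposal of the case $x_+=x_-$ is incorrect as stated: the Grading axiom does \emph{not} forbid $M^\ca_0(\widetilde{x},\widetilde{y})$ and $M^\ca_0(\widetilde{y},\widetilde{x})$ from being simultaneously nonempty for all decorations. Because of the dimension shifts in the constrained cascade spaces, nonemptiness of $M^\ca_0(\widehat{x},\widecheck{y})$ forces $|x|\equiv|y|$ (total moduli dimension $0$), while nonemptiness of $M^\ca_0(\widecheck{y},\widehat{x})$ forces total dimension $2$, hence again $|x|\equiv|y|$; similarly $M^\ca_0(\widehat{x},\widehat{y})$ and $M^\ca_0(\widehat{y},\widehat{x})$ both force $|x|\not\equiv|y|$. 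So in the cases $(\widetilde{x}_+,\widetilde{x}_-)=(\widehat{x},\widehat{x})$ and $(\widecheck{x},\widecheck{x})$ the parity argument gives no contradiction. The correct reason (the one the paper uses) is the Finiteness axiom: if both spaces were nonempty one could concatenate the underlying chains of nonempty moduli spaces $x\to\cdots\to y\to\cdots\to x\to\cdots$ indefinitely, producing infinitely many tuples starting at $x$.

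Second, and more substantially, you do not prove the orientation statement, which is part of the proposition. Your argument establishes (modulo the usual genericity bookkeeping) the unoriented identification of the boundary, but the claim that the boundary orientation agrees with the product orientation on \eqref{eqn:caco} is exactly where the paper does most of its work: it computes the oriented boundary of the partial compactification (equation \eqref{eqn:bpc}) and the signed list of ends (equations \eqref{eqn:endsf1}--\eqref{eqn:endsf4}, combined in \eqref{eqn:endspc}), checks that the breaking-type boundary points cancel the collision-type ends \emph{with opposite orientations} (without which the glued object would not be an oriented $1$-manifold), and verifies that the leftover ends carry the product orientation --- and it is precisely in the $y=x_\pm$ terms that the sign assignment to the two points $S_x,p_x$ of \eqref{eqn:cascadesimple2}, depending on whether $\mc{O}_x$ is trivial, is used (this is what justifies \eqref{eqn:minustwo}). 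Declaring this "the main obstacle" and deferring it leaves the oriented statement, hence $\partial^2=0$ over $\Z$, unproved; a complete argument must carry out these sign computations, at least in one representative case with the others reduced to it, as the paper does for $(\widehat{x}_+,\widehat{x}_-)$.
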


\begin{proof}
(a) By \eqref{eqn:cascadesimple1} and \eqref{eqn:cascadesimple2}, we can assume that $x_+\neq x_-$.

In the unconstrained case when $\widetilde{x}_+=\widehat{x}_+$ and $\widetilde{x}_-=\widecheck{x}_-$, so that all factors in the cascade \eqref{eqn:cascadehatcheck} live in $0$-dimensional moduli spaces $M_0(x_{i-1},x_i)$, the desired finiteness of $M_0^\ca(\widehat{x}_+,\widecheck{x}_-)$ follows from the Finiteness axiom and part (a) of the Compactification axiom.

If instead we have $\widetilde{x}_+=\widecheck{x}_+$, then we also need to know that $M_1(x_+,p_{x_+},x)$ is finite for every $x\neq x_+$. Suppose to get a contradiction that there is an infinite sequence $\{u_i\}_{i=1,\ldots}$ of distinct elements of $M_1(x_+,p_{x_+},x)$. By part (b) of the Compactification axiom, we can pass to a subsequence so that $\{u_i\}$ converges to a point $u_\infty\in \overline{M}_1(x_+,x)$ with $e_+(u_\infty)=p_{x_+}$. If $u_\infty\in M_1(x_+,x)$, this contradicts the assumption that $p_{x_+}$ is a regular value of $e_+$.  Thus $u_\infty\in\overline{M}_1(x_+,x)\setminus M_1(x_+,x)$. Since the latter set is finite, if $p_{x_+}$ is generic then it is not in $e_+$ of this set, which is also a contradiction. Therefore, if $\mc{P}$ is generic then $M_1(x_+,p_{x_+},x)$ is finite for every $x\neq x_+$, and consequently $M_0^\ca(\widecheck{x}_+,\widecheck{x}_-)$ is finite.

Similar arguments show that if $\mc{P}$ is generic then $M_1(x,x_-,p_{x_-})$ is finite for every $x\neq x_-$, and $M_2(x_+,p_{x_+},x_-,p_{x_-})$ is finite. We then likewise deduce that $M_0^\ca(\widehat{x}_+,\widehat{x}_-)$ and $M_0^\ca(\widecheck{x}_+,\widehat{x}_-)$ are finite.

(b) If $x_+=x_-$, then $M^\ca_1(\widetilde{x}_+,\widetilde{x}_-)=\emptyset$ by definition, so we can (and must) take the compactification to be the empty set. The right hand side of \eqref{eqn:caco} is also empty; otherwise we could make arbitrarily long chains of nonempty moduli spaces, violating the Finiteness axiom.

Suppose now that $x_+\neq x_-$. There are four cases to consider, depending on whether $\widetilde{x}_+$ equals $\widehat{x}_+$ or $\widecheck{x}_+$, and whether $\widetilde{x}_-$ equals $\widehat{x}_-$ or $\widecheck{x}_-$. We will just consider the case where $\widetilde{x}_+=\widehat{x}_+$ and $\widetilde{x}_-=\widehat{x}_-$; the proofs in the other cases use the same ideas. We now need to show that $\M^\ca_1(\widehat{x}_+,\widehat{x}_-)$ has a compactification $\overline{M}^\ca_1(\widehat{x}_+,\widehat{x}_-)$ with oriented boundary
\begin{equation}
\label{eqn:nnts}
\begin{split}
\partial \overline{M}^\ca_1(\widehat{x}_+,\widehat{x}_-) = & \coprod_{y\in X}M^\ca_0(\widehat{x}_+,\widecheck{y}) \times M^\ca_0(\widecheck{y},\widehat{x}_-)\\
&\bigsqcup \coprod_{y\in X} M^\ca_0(\widehat{x}_+,\widehat{y}) \times M^\ca_0(\widehat{y},\widehat{x}_-).
\end{split}
\end{equation}

Recall that
\begin{equation}
\label{eqn:recallmca1}
\begin{split}
{M^\ca_1}(\widehat{x}_+,\widehat{x}_-) =& \coprod_{k\ge 1} \coprod_{x_+=x_0,x_1,\ldots,x_k=x_-} \coprod_{d_1+\cdots +d_k=2}
\\
&{M}_{d_1}^*(x_0,x_1) \underset{S(x_1)}{\circlearrowleft} {M}_{d_2}^*(x_1,x_2) \underset{S(x_2)}{\circlearrowleft} \cdots \underset{S(x_{k-1})}{\circlearrowleft} {M}_{d_k}^*(x_{k-1},x_k,p_{x_k}).
\end{split}
\end{equation}
Note that if $(u_1,\ldots,u_k)$ is in the above moduli space, then only one of the factors $u_i$ is in a $1$-dimensional moduli space; this is $M_1^*(x_{i-1},x_{i})$ if $i<k$, and $M_2^*(x_{k-1},x_k,p_{x_k})$ if $i=k$. Every other $u_i$ is rigid, i.e.\ in a $0$-dimensional moduli space; this is $M_0^*(x_{i-1},x_i)$ if $i<k$ and $M_1^*(x_{k-1},x_k,p_{x_k})$ if $i=k$.

The idea of the proof of \eqref{eqn:nnts} is that the moduli space \eqref{eqn:recallmca1} has ends where one of the following happens: (i) the non-rigid $u_i$ approaches an end of its moduli space; (ii) $e_+(u_i)$ approaches $e_-(u_{i-1})$ (when $i>1$); (iii) $e_-(u_i)$ approaches $e_+(u_{i+1})$ (when $i<k$); (iv) $e_+(u_i)$ approaches $p_{x_{i-1}}$; or (v) $e_-(u_i)$ approaches $p_{x_i}$ (when $i<k$). We can compactify the moduli space by gluing together ends of the form (i), (ii), and (iii), and adding boundary points for ends of the form (iv) and (v). Boundary points of type (iv) correspond to the first line on the right hand side of \eqref{eqn:nnts}, and boundary points of type (v) correspond to the second line of \eqref{eqn:nnts}.

To be more precise, and to explain how the orientations work, note that there are four possibilities for the $i$ such that $u_i$ is not rigid: $1=i=k$, $1=i<k$, $1<i=k$, or $1<i<k$. Accordingly, we can write
\[
\begin{split}
{M^\ca_1}(\widehat{x}_+,\widehat{x}_-) =& M_2^*(x_+,x_-,p_{x_-})\\
&\bigsqcup \coprod_{x_+\neq x'\neq x_-} M_1^*(x_+,x') \underset{S(x')}{\circlearrowleft} M^\ca_0(\widehat{x}',\widehat{x}_-)\\
&\bigsqcup \coprod_{x_+\neq x'\neq x_-} M^\ca_0(\widehat{x}_+,\widecheck{x}') \underset{S(x')}{\circlearrowleft} M_2^*(x',x_-,p_{x_-})\\
&\bigsqcup \coprod_{x_+\neq x'\neq x''\neq x_-} M^\ca_0(\widehat{x}_+,\widecheck{x}') \underset{S(x')}{\circlearrowleft} M_1^*(x',x'') \underset{S(x'')}{\circlearrowleft} M^\ca_0(\widehat{x}'',\widehat{x}_-).
\end{split}
\]

We first define a ``partial compactification'' of $M^\ca_1(\widehat{x}_+,\widehat{x}_-)$ by compactifying the $1$-dimensional moduli spaces above, to get
\[
\widetilde{M}^\ca_1(\widehat{x}_+,\widehat{x}_-) = F_1 \sqcup F_2 \sqcup F_3 \sqcup F_4
\]
where
\[
\begin{split}
F_1 &= \overline{M}_2^*(x_+,x_-,p_{x_-}),\\
F_2 &= \coprod_{x_+\neq x'\neq x_-} \overline{M}_1^*(x_+,x') \underset{S(x')}{\circlearrowleft} M^\ca_0(\widehat{x}',\widehat{x}_-),\\
F_3 &= \coprod_{x_+\neq x'\neq x_-} M^\ca_0(\widehat{x}_+,\widecheck{x}') \underset{S(x')}{\circlearrowleft} \overline{M}_2^*(x',x_-,p_{x_-}),\\
F_4 &= \coprod_{x_+\neq x'\neq x'' \neq x_-} M^\ca_0(\widehat{x}_+,\widecheck{x}') \underset{S(x')}{\circlearrowleft} \overline{M}_1^*(x',x'') \underset{S(x'')}{\circlearrowleft} M^\ca_0(\widehat{x}'',\widehat{x}_-).
\end{split}
\]
Here $\overline{M}_2^*(x_+,x_-,p_{x_-})$ denotes the set of $u\in\overline{M}_2(x_+,x_-,p_{x_-})$ such that $e_+(u)\neq p_{x_+}$, and so forth; and the cyclic fiber products are oriented as before.

By parts (b) and (c) of the Compactification axiom, the oriented boundaries of the four parts of the partial compactification are given by
\[
\begin{split}
\partial F_1 =&
\coprod_{x_+\neq x'\neq x_-}M_0^*(x_+,x')\times_{S(x')}M_2^*(x',x_-,p_{x_-}) \\
& - \coprod_{x_+\neq x'\neq x_-}M_1^*(x_+,x')\times_{S(x')} M_1^*(x',x_-,p_{x_-}),
\\
\partial F_2 =& \coprod_{x_+\neq x'\neq x''\neq x_-} {M}_0^*(x_+,x')\times_{S(x')}M_1^*(x',x'') \underset{S(x'')}{\circlearrowleft} M^\ca_0(\widehat{x}'',\widehat{x}_-)\\
& -\coprod_{x_+\neq x'\neq x''\neq x_-} {M}_1^*(x_+,x')\times_{S(x')}M_0^*(x',x'') \underset{S(x'')}{\circlearrowleft} M^\ca_0(\widehat{x}'',\widehat{x}_-),
\\
\partial F_3 =& \coprod_{x_+\neq x'\neq x''\neq x_-} M^\ca_0(\widehat{x}_+,\widecheck{x}') \underset{S(x')}{\circlearrowleft} M_0^*(x',x'')\times_{S(x'')}{M}_2^*(x'',x_-,p_{x_-})\\
& -\coprod_{x_+\neq x'\neq x''\neq x_-} M^\ca_0(\widehat{x}_+,\widecheck{x}') \underset{S(x')}{\circlearrowleft} M_1^*(x',x'')\times_{S(x'')}{M}_1^*(x'',x_-,p_{x_-}),
\\
\partial F_4 =& \coprod_{x_+\neq x'\neq x''\neq x'''\neq x_-} M^\ca_0(\widehat{x}_+,\widecheck{x}') \underset{S(x')}{\circlearrowleft} {M}_0^*(x',x'')\times_{S(x'')}M_1^*(x'',x''') \underset{S(x''')}{\circlearrowleft} M^\ca_0(\widehat{x}''',\widehat{x}_-)\\
& -\coprod_{x_+\neq x'\neq x''\neq x'''\neq x_-} M^\ca_0(\widehat{x}_+,\widecheck{x}') \underset{S(x')}{\circlearrowleft} {M}_1^*(x',x'')\times_{S(x'')}M_0^*(x'',x''') \underset{S(x''')}{\circlearrowleft} M^\ca_0(\widehat{x}''',\widehat{x}_-).
\end{split}
\]
Note that we can use starred moduli spaces in the above equations, by our assumption that each point $p_x$ is a regular value of all evaluation maps $e_\pm$.

We can combine the above four equations to obtain the following formula for the boundary of the partial compactification:
\begin{equation}
\label{eqn:bpc}
\begin{split}
\partial\widetilde{M}^\ca_1(\widehat{x}_+,\widehat{x}_-) =& \coprod_{x_+\neq x'\neq x_-} M_0^\ca(\widehat{x}_+,\widecheck{x}')\times_{S(x')} M^\ca_1(\widehat{x}',\widehat{x}_-)\\
& - \coprod_{x_+\neq x'\neq x_-} M_1^\ca(\widehat{x}_+,\widecheck{x}') \times_{S(x')} M^\ca_0(\widehat{x}',\widehat{x}_-).
\end{split}
\end{equation}
Here the first line of \eqref{eqn:bpc} corresponds to the first lines of the previous four equations.

The partial compactification also has ends, where $e_+$ or $e_-$ of the non-rigid factor approaches a forbidden value. We now classify these. Here the signs are determined by the orientation conventions in \S\ref{sec:conventions} and Convention~\ref{convention:constraints}.

To start, there are ends of $F_1$ where $e_+$ approaches $p_{x_+}$ from either side. Thus
\begin{equation}
\label{eqn:endsf1}
\op{Ends}(F_1) = M^\ca_0(\widehat{x}_+,\widecheck{x}_+)\times M_2(x_+,p_{x_+},x_-,p_{x_-}).
\end{equation}
Note that since we are assuming that the points $p_{x_+}$ and $p_{x_-}$ are generic, we do not need a bar on $M_2$ in this equation.

Next, there are ends of $F_2$ where $e_+$ of the first factor approaches $p_{x_+}$, where $e_-$ of the first factor approaches $p_{x'}$, and where $e_-$ of the first factor approaches $e_+$ of the second factor. Thus
\begin{equation}
\label{eqn:endsf2}
\begin{split}
\op{Ends}(F_2) =& \coprod_{x_+\neq x'\neq x_-} M^\ca_0(\widehat{x}_+,\widecheck{x}_+) \times M_1^*(x_+,p_{x_+},x') \underset{S(x')}{\circlearrowleft} M^\ca_0(\widehat{x}',\widehat{x}_-)\\
& + \coprod_{x_+\neq x'\neq x_-} M_1^*(x_+,x',p_{x'}) \times M^\ca_0(\widehat{x}',\widehat{x}_-)\\
& + \coprod_{x_+\neq x'\neq x_-} M_1^*(x_+,x')\times_{S(x')} M^\ca_0(\widehat{x}',\widehat{x}_-).
\end{split}
\end{equation}
Note that we do not need bars on $M_1$ in the first two lines of this equation because $p_{x_+}$ and $p_{x_-}$ are generic; and we do not need a bar on $M_1$ in the third line by part (e) of the Compactification axiom.

Next, there are ends of $F_3$ where $e_+$ of the second factor approaches $p_{x'}$, and where $e_+$ of the second factor approaches $e_-$ of the first factor. Thus
\begin{equation}
\label{eqn:endsf3}
\begin{split}
\op{Ends}(F_3) =& \coprod_{x_+\neq x'\neq x_-} M^\ca_0(\widehat{x}_+,\widecheck{x}') \times M_2(x',p_{x'},x_-,p_{x_-})\\
& - \coprod_{x_+\neq x'\neq x_-} M^\ca_0(\widehat{x}_+,\widecheck{x}') \times_{S(x')} M_2^*(x',x_-,p_{x_-}),
\end{split}
\end{equation}
where we do not need bars on $M_2$ as before.

Finally, there are ends of $F_4$ where $e_+$ of the second factor approaches $p_{x'}$, where $e_+$ of the second factor approaches $e_-$ of the first factor, where $e_-$ of the second factor approaches $p_{x''}$, and where $e_-$ of the second factor approaches $e_+$ of the third factor. Similarly to the above, we obtain
\begin{equation}
\label{eqn:endsf4}
\begin{split}
\op{Ends}(F_4) =& \coprod_{x_+\neq x'\neq x''\neq x_+} M^\ca_0(\widehat{x}_+,\widecheck{x}') \times {M}_1^*(x',p_{x'},x'') \underset{S(x'')}{\circlearrowleft} M^\ca_0(\widehat{x}'',\widehat{x}_-)\\
& -\coprod_{x_+\neq x'\neq x''\neq x_-} M^\ca_0(\widehat{x}_+,\widecheck{x}') \times_{S(x')} {M}_1^*(x',x'') \underset{S(x'')}{\circlearrowleft} M^\ca_0(\widehat{x}'',\widehat{x}_-)\\
& +\coprod_{x_+\neq x'\neq x''\neq x_-} M^\ca_0(\widehat{x}_+,\widecheck{x}') \underset{S(x')}{\circlearrowleft} {M}_1^*(x',x'',p_{x''}) \times M^\ca_0(\widehat{x}'',\widehat{x}_-)\\
& +\coprod_{x_+\neq x'\neq x''\neq x_-} M^\ca_0(\widehat{x}_+,\widecheck{x}') \underset{S(x')}{\circlearrowleft} {M}_1^*(x',x'') \times_{S(x'')} M^\ca_0(\widehat{x}'',\widehat{x}_-).
\end{split}
\end{equation}

Putting the above four equations together, we obtain
\begin{equation}
\label{eqn:endspc}
\begin{split}
\op{Ends}(\widetilde{M}^\ca_1(\widehat{x}_+,\widehat{x}_-)) =& -\coprod_{x_+\neq x'\neq x_-} M_0^\ca(\widehat{x}_+,\widecheck{x}')\times_{S(x')} M^\ca_1(\widehat{x}',\widehat{x}_-)\\
& + \coprod_{x_+\neq x'\neq x_-} M_1^\ca(\widehat{x}_+,\widecheck{x}') \times_{S(x')} M^\ca_0(\widehat{x'},\widehat{x}_-)\\
& +\coprod_{y}M^\ca_0(\widehat{x}_+,\widecheck{y}) \times M^\ca_0(\widecheck{y},\widehat{x}_-)\\
& +\coprod_{y} M^\ca_0(\widehat{x}_+,\widehat{y}) \times M^\ca_0(\widehat{y},\widehat{x}_-).
\end{split}
\end{equation}
Here the first line of \eqref{eqn:endspc} corresponds to the second lines of \eqref{eqn:endsf3} and \eqref{eqn:endsf4}; the second line of \eqref{eqn:endspc} corresponds to the third line of \eqref{eqn:endsf2} and the fourth line of \eqref{eqn:endsf4}; the third line of \eqref{eqn:endspc} corresponds to the first lines of \eqref{eqn:endsf1}, \eqref{eqn:endsf2}, \eqref{eqn:endsf3}, and \eqref{eqn:endsf4}; and the fourth line of \eqref{eqn:endspc} corresponds to the second line of \eqref{eqn:endsf2} and the third line of \eqref{eqn:endsf4}.

To conclude, the first two lines of \eqref{eqn:endspc} match the corresponding lines of \eqref{eqn:bpc}, but with opposite orientations. Thus we can glue these ends and boundary points together, and add points corresponding to the last two lines of \eqref{eqn:endspc}, to obtain the desired compactification of $M^\ca_1(\widehat{x}_+,\widehat{x}_-)$ satisfying \eqref{eqn:nnts}.
\end{proof}

\subsection{Definition of cascade homology}

We already explained in \S\ref{sec:cascadesetup} how to define the $\Z/2$-graded chain module $C_*^\ca(A,\mc{P})$.

\begin{definition}
We define the differential
\[
\partial: C_*^\ca(A,\mc{P}) \longrightarrow C_{*-1}^\ca(A,\mc{P})
\]
as follows. Let $x\in X$, and fix $\widetilde{x}$ to denote one of $\widehat{x}$ or $\widecheck{x}$. Define
\[
\partial\widetilde{x} = \sum_{y\in X} \left( \#M_0^\ca\left(\widetilde{x},\widecheck{y}\right) \widecheck{y} + \#M_0^\ca\left(\widetilde{x},\widehat{y}\right) \widehat{y}\right).
\]
\end{definition}

Here $\#M_0^\ca$ denotes the signed count of points in $M_0^\ca$; it follows from Proposition~\ref{prop:cascadekey}(a) that this is well defined. Furthermore, the Finiteness axiom guarantees that the whole sum is finite. It follows from the Grading axiom that $\partial$ decreases the mod $2$ grading by $1$. Finally, Proposition~\ref{prop:cascadekey}(b) shows that $\partial^2=0$.

\begin{definition}
\label{def:hca}
We define the cascade homology $H_*^\ca(A,\mc{P})$ to be the homology of the chain complex $\left(C_*^\ca(A,\mc{P}),\partial\right)$.
\end{definition}

\subsection{The conjugate of a Morse-Bott system}
\label{sec:conjugate}

To clarify some signs in the definition of induced maps on cascade homology, it will help to consider a modification of a Morse-Bott system in which the orientation on each moduli space $M_d$ is multiplied by $(-1)^d$.

\begin{definition}
If $A=(X,|\cdot|,S,\mc{O},M_*,e_\pm)$ is a Morse-Bott system, define its {\em conjugate\/} $\underline{A}=(X,|\cdot|,S,\mc{O},\underline{M}_*,e_\pm)$, where 
\[
\underline{M}_d(x_+,x_-) = (-1)^d M_d(x_+,x_-).
\]
\end{definition}

Note that the conjugate of a Morse-Bott system is also a Morse-Bott system, because when we pass to the conjugate, for each of the equations \eqref{eqn:compactification1}--\eqref{eqn:compactification3}, both sides change sign in the same way. Conjugation also does not affect the cascade homology: It follows from the Grading axiom that the chain complexes $C^\ca_*(\underline{A},\mc{P})$ and $C^\ca_*(A,\mc{P})$ are isomorphic via the involution which multiplies each generator $\widecheck{x}$ or $\widehat{x}$ by $(-1)^{|x|}$.

For our purposes, a slightly different involution will be more useful:

\begin{definition}
Define
\[
\tau: C^\ca_*(\underline{A},\mc{P}) \longrightarrow C^\ca_*(A,\mc{P})
\]
by $\tau(\widecheck{x})=\widecheck{x}$ and $\tau(\widehat{x})=-\widehat{x}$.
\end{definition}

\begin{lemma}
\label{lem:tauacm}
Let $\underline{\partial}$ denote the differential on $C^\ca_*(\underline{A},\mc{P})$. Then
\[
\partial\tau = -\tau\underline{\partial}.
\]
\end{lemma}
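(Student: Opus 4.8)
The plan is to unwind the definitions on each of the four types of generators $\widecheck{x}$ and $\widehat{x}$ and compare the coefficients of $\partial\tau$ and $-\tau\underline{\partial}$. Recall that $\partial\widetilde{x}=\sum_{y}(\#M_0^\ca(\widetilde{x},\widecheck{y})\,\widecheck{y}+\#M_0^\ca(\widetilde{x},\widehat{y})\,\widehat{y})$, and the same formula with $\underline{M}$ defines $\underline{\partial}$. So the claim reduces to a bookkeeping of how the cascade moduli spaces $M_0^\ca(\widetilde{x}_+,\widetilde{x}_-)$ change when one conjugates the underlying Morse-Bott system. Since $\tau$ acts by $+1$ on $\widecheck{}$ and $-1$ on $\widehat{}$, the four matrix-entry comparisons I need are: $\#\underline{M}_0^\ca(\widecheck x,\widecheck y) = -\#M_0^\ca(\widecheck x,\widecheck y)$; $\#\underline{M}_0^\ca(\widecheck x,\widehat y) = +\#M_0^\ca(\widecheck x,\widehat y)$; $\#\underline{M}_0^\ca(\widehat x,\widecheck y) = +\#M_0^\ca(\widehat x,\widecheck y)$ (so that after hitting with $\tau$ on source and target, with the two signs $(-1)$ from $\widehat x$ and $(+1)$ from $\widecheck y$, it matches $-1$ times the $A$-count); and $\#\underline{M}_0^\ca(\widehat x,\widehat y) = -\#M_0^\ca(\widehat x,\widehat y)$.

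First I would dispose of the diagonal case $x_+=x_-$. By \eqref{eqn:cascadesimple1} and \eqref{eqn:cascadesimple2} the only nonzero diagonal entry is $\langle\partial\widehat x,\widecheck x\rangle$, governed by the signs attached to the two points $S_x,p_x$ of $M_0^\ca(\widehat x,\widecheck x)$; that sign rule is stated purely in terms of whether $\mc{O}_x$ is trivial and does not reference the orientations of the $M_d$'s, so conjugation leaves it unchanged. Hence $\#\underline{M}_0^\ca(\widehat x,\widecheck x)=\#M_0^\ca(\widehat x,\widecheck x)$, and applying $\tau$ on both sides contributes the sign $-1$ (from $\tau\widehat x=-\widehat x$), which is exactly what is needed for $\partial\tau=-\tau\underline\partial$ on this entry.

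Next, for $x_+\ne x_-$, I would use the explicit descriptions \eqref{eqn:cascadehatcheck}, \eqref{eqn:cascadecheckcheck}, \eqref{eqn:cascadehathat}, \eqref{eqn:cascadecheckhat}. A point of $M_0^\ca(\widetilde x_+,\widetilde x_-)$ is a cyclic fiber product $(u_1,\dots,u_k)$ with $u_i\in M_{d_i}^*(\dots)$ and $\sum d_i$ equal to $0$, $1$, or $2$ according to how many of $\widetilde x_\pm$ are of ``check/hat'' type carrying a point constraint: $\sum d_i = c$ where $c\in\{0,1,2\}$ is the number of point constraints (none for $(\widehat{\cdot},\widecheck{\cdot})$, one for $(\widecheck{\cdot},\widecheck{\cdot})$ and $(\widehat{\cdot},\widehat{\cdot})$, two for $(\widecheck{\cdot},\widehat{\cdot})$). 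The orientation of the cascade space is built by taking the product of the orientations of the factors $M_{d_i}$ in order and then applying the parallel-transport isomorphisms \eqref{eqn:pt1}--\eqref{eqn:pt3}; the latter do not involve the $M_d$ orientations at all. Conjugating replaces the orientation of $M_{d_i}$ by $(-1)^{d_i}$ times itself, so the cascade orientation is multiplied by $\prod_i(-1)^{d_i}=(-1)^{\sum d_i}=(-1)^c$. Note that the constrained spaces $M_d(x_+,p_+,x_-)$, $M_d(x_+,x_-,p_-)$, $M_d(x_+,p_+,x_-,p_-)$ are oriented via Convention~\ref{convention:constraints} as (signed) level sets, and the sign in ``derivative first'' / ``minus a level set'' does not change under conjugation — only the ambient $M_d$ orientation flips — so the counting factor really is just $(-1)^c$. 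Therefore $\#\underline M_0^\ca(\widetilde x_+,\widetilde x_-) = (-1)^{c(\widetilde x_+,\widetilde x_-)}\,\#M_0^\ca(\widetilde x_+,\widetilde x_-)$, with $c=0$ for $(\widehat{\cdot},\widecheck{\cdot})$, $c=1$ for $(\widecheck{\cdot},\widecheck{\cdot})$ and $(\widehat{\cdot},\widehat{\cdot})$, and $c=2$ for $(\widecheck{\cdot},\widehat{\cdot})$.

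Finally I would assemble the comparison. Writing $s(\widetilde x)\in\{\pm1\}$ for the sign by which $\tau$ acts ($s(\widecheck x)=+1$, $s(\widehat x)=-1$), the $(\widetilde x_+,\widetilde x_-)$-coefficient of $\partial\tau$ is $s(\widetilde x_+)\#M_0^\ca(\widetilde x_+,\widetilde x_-)$, while that of $-\tau\underline\partial$ is $-s(\widetilde x_-)\#\underline M_0^\ca(\widetilde x_+,\widetilde x_-) = -s(\widetilde x_-)(-1)^{c(\widetilde x_+,\widetilde x_-)}\#M_0^\ca(\widetilde x_+,\widetilde x_-)$. So the identity $\partial\tau=-\tau\underline\partial$ is equivalent to the elementary check $s(\widetilde x_+) = -s(\widetilde x_-)(-1)^{c(\widetilde x_+,\widetilde x_-)}$ in each of the five cases (four off-diagonal types plus the diagonal $(\widehat x,\widecheck x)$, where $c=0$, $s(\widehat x)=-1$, $s(\widecheck x)=+1$, and indeed $-1=-(+1)(+1)$). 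Running through the other four: $(\widehat{\cdot},\widecheck{\cdot})$: $c=0$, need $-1 = -(+1)$, true; $(\widehat{\cdot},\widehat{\cdot})$: $c=1$, need $-1 = -(-1)(-1)$, true; $(\widecheck{\cdot},\widecheck{\cdot})$: $c=1$, need $+1 = -(+1)(-1)$, true; $(\widecheck{\cdot},\widehat{\cdot})$: $c=2$, need $+1 = -(-1)(+1)$, true. This finishes the proof. The only place demanding genuine care — and the step I expect to be the main obstacle — is verifying that the parallel-transport isomorphisms \eqref{eqn:pt1}--\eqref{eqn:pt3} and the level-set orientation conventions of Convention~\ref{convention:constraints} are genuinely insensitive to conjugation, so that the cascade orientation really changes by the clean factor $(-1)^{\sum d_i}$ with no stray signs; once that is pinned down, the rest is the five-case sign table above.
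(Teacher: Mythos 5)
Your proof is correct and follows essentially the same route as the paper: the paper's own argument is just the observation that conjugation rescales the count of a zero-dimensional cascade space by $(-1)^{\sum_i d_i}$, and that this total moduli-space dimension is even for hat-to-check and check-to-hat cascades and odd for check-to-check and hat-to-hat cascades, which is exactly your $(-1)^c$ bookkeeping combined with the signs of $\tau$. Your additional checks (the diagonal case and the insensitivity of Convention~\ref{convention:constraints} and the parallel-transport isomorphisms to conjugation) are correct and simply make explicit what the paper leaves implicit.
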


\begin{proof}
By \eqref{eqn:cascadehatcheck} and \eqref{eqn:cascadecheckhat}, the differentials from hat generators to check generators or vice-versa count cascades with total moduli space dimension even. By \eqref{eqn:cascadecheckcheck} and \eqref{eqn:cascadehathat}, the differentials between check and check generators, or between hat and hat generators, count cascades with total moduli space dimension odd.
\end{proof}

The reason why conjugation is useful is that if $\Phi:A_1\to A_2$ is a morphism of Morse-Bott systems, then we can rewrite equations \eqref{eqn:compactiphi1}--\eqref{eqn:compactiphi3} using the conjugate of $A_1$ (but not the conjugate of $A_2$) to obtain nicer signs, which look just like the signs in equations \eqref{eqn:compactification1}--\eqref{eqn:compactification3}. Namely, we have
\begin{equation}
\label{eqn:conjugatemorphism}
\begin{split}
\partial \overline{\Phi}_1(x_1,x_2) =& \coprod_{\substack{x_1'\in X_1\setminus\{x_1\}\\d_1+d=1}}
(-1)^{d_1} \underline{M}_{d_1}^1(x_1,x_1') \times_{S_1(x_1')} \Phi_d(x_1',x_2)\\
& \bigsqcup \coprod_{\substack{x_2'\in X_2\setminus\{x_2\}\\ d+d_2=1}} 
(-1)^{d}
\Phi_d(x_1,x_2') \times_{S_2(x_2')}M_{d_2}^2(x_2',x_2),\\
\partial \overline{\Phi}_2(x_1,x_2,p_2) =& \coprod_{\substack{x_1'\in X_1\setminus\{x_1\}\\d_1+d=2}}
(-1)^{d_1}\underline{M}_{d_1}^1(x_1,x_1') \times_{S_1(x_1')} \Phi_d(x_1',x_2,p_2)\\
& \bigsqcup \coprod_{\substack{x_2'\in X_2\setminus\{x_2\}\\ d+d_2=2}} 
(-1)^{d}
\Phi_d(x_1,x_2') \times_{S_2(x_2')} M_{d_2}^2(x_2',x_2,p_2),\\
\partial \overline{\Phi}_2(x_1,p_1,x_2) =& \coprod_{\substack{x_1'\in X_1\setminus\{x_1\}\\d_1+d=2}}
(-1)^{d_1-1}
\underline{M}_{d_1}^1(x_1,p_1,x_1') \times_{S_1(x_1')} \Phi_d(x_1',x_2)\\
& \bigsqcup \coprod_{\substack{x_2'\in X_2\setminus\{x_2\}\\ d+d_2=2}} 
(-1)^{d-1}
\Phi_d(x_1,p_1,x_2') \times_{S_2(x_2')} M_{d_2}^2(x_2',x_2),\\
\partial \overline{\Phi}_3(x_1,p_1,x_2,p_2) =& \coprod_{\substack{x_1'\in X_1\setminus\{x_1\}\\d_1+d=3}}
(-1)^{d_1-1}
\underline{M}_{d_1}^1(x_1,p_1,x_1') \times_{S_1(x_1')} \Phi_d(x_1',x_2,p_2)\\
& \bigsqcup \coprod_{\substack{x_2'\in X_2\setminus\{x_2\}\\ d+d_2=3}} 
(-1)^{d-1}
\Phi_d(x_1,p_1,x_2') \times_{S_2(x_2')} M_{d_2}^2(x_2',x_2,p_2).
\end{split}
\end{equation}

\subsection{Induced maps on cascade homology}
\label{sec:inducedmap}

Let $A_1=(X_1,|\cdot|_1,S_1,\mc{O}^1,M^1_*,e^1_\pm)$ and $A_2=(X_2,|\cdot|_2,S_2,\mc{O}^2,M^2_*,e^2_\pm)$ be Morse-Bott systems, and let $\Phi:A_1\to A_2$ be a morphism of Morse-Bott systems. Let $(\mc{P}_1,\mc{P}_2)$ be a generic pair of choices as needed to define the cascade chain complexes $C^\ca_*(A_1,\mc{P}_1)$ and $C^\ca_*(A_2,\mc{P}_2)$. We now define a chain map
\[
\Phi_\sharp: C^\ca_*(A_1,\mc{P}_1) \longrightarrow C^\ca_*(A_2,\mc{P}_2).
\]

The idea is to define $\Phi_\sharp$ by counting ``hybrid'' cascades consisting of some elements of $M^1_d$, followed by an element of $\Phi_d$, followed by some elements of $M^2_d$, with total moduli space dimension zero (after point constraints are taken into account). The chain map equation arises by considering such cascades with total moduli space dimension one. In order to simplify the notation when defining this precisely, we will use the following shortcut.

\begin{definition}
\label{def:A1A2}
Define a Morse-Bott system
\[
\underline{A}_1 \sqcup_\Phi A_2 = (X,|\cdot|,S,\mc{O},M_*,e_\pm)
\]
as follows. We take $X=X_1\sqcup X_2$. For $x_1\in X_1$ we define $|x_1|=|x_1|_1+1$, $S_{x_1}=S^1_{x_1}$, and $\mc{O}_{x_1} = \mc{O}^1_{x_1}$. For $x_2\in X_2$ we define $|x_2|=|x_2|_2$, $S_{x_2}=S^2_{x_2}$, and $\mc{O}_{x_2} = \mc{O}^2_{x_2}$. For $x_1,x_1'\in X_1$ and $x_2,x_2'\in X_2$, we define
\[
\begin{split}
M_d(x_1,x_1') &= \underline{M}^1_d(x_1,x_1'),\\
M_d(x_2,x_2') &= M^2_d(x_2,x_2'),\\
M_d(x_1,x_2) &= \Phi_d(x_1,x_2),\\
M_d(x_2,x_1) &= \emptyset.
\end{split}
\]
The evaluation maps $e_\pm$ on these moduli spaces are the same as the evaluation maps for $\underline{A}_1$, $A_2$, and $\Phi$.
\end{definition}

It follows from the equations \eqref{eqn:conjugatemorphism} that $\underline{A}_1\sqcup_\Phi A_2$ is a Morse-Bott system. We can now use the generic choices $(\mc{P}_1,\mc{P}_2)$ to define the cascade chain complex for this Morse-Bott system. Let $\partial$ denote the differential. Let $\underline{\partial}_{1}$ denote the differential on $C^\ca_*(\underline{A}_1,\mc{P}_1)$, and let $\partial_{2}$ denote the differential on $C^\ca_*(A_2,\mc{P}_2)$. Let $\Phi_\sharp$ denote the portion of $\partial$ mapping from $C^\ca_*(\underline{A}_1,\mc{P}_1)$ to $C^\ca_*(A_2,\mc{P}_2)$, precomposed with the involution $\tau$. We can then write the full cascade differential $\partial$ in block matrix form as
\[
\partial = 
\begin{pmatrix}
\underline{\partial}_{1} & 0 \\ \Phi_\sharp\tau & \partial_{2}
\end{pmatrix}.
\]

Since $\partial^2=0$, it follows that
\[
\partial_{2}\Phi_\sharp\tau + \Phi_\sharp\tau\underline{\partial}_{1} = 0.
\]
Since $\tau\underline{\partial}_{1} = -\partial_{1}\tau$, the above equation is equivalent to
\[
\partial_{2}\Phi_\sharp = \Phi_\sharp\partial_{1}.
\]
Thus $\Phi_\sharp$ is a chain map.

\begin{definition}
\label{def:phistar}
Let
\[
\Phi_*^{\mc{P}_2,\mc{P}_1} : H^\ca_*(A_1,\mc{P}_1) \longrightarrow H^\ca_*(A_2,\mc{P}_2).
\]
denote the map on cascade homology induced by the chain map $\Phi_\sharp$.
\end{definition}

\subsection{More conjugation}

Our next goal is to prove that the induced maps on cascade homology are functorial. To prepare for this, it will be useful to consider the conjugate of a morphism.

\begin{definition}
If $\Phi$ is a morphism of Morse-Bott systems from $A_1$ to $A_2$, define its {\em conjugate\/} $\underline{\Phi}$ by
\[
\underline{\Phi}_d(x_1,x_2) = (-1)^d\Phi_d(x_1,x_2).
\]
Observe that $\underline{\Phi}$ is a morphism of Morse-Bott systems from $A_1$ to $\underline{A}_2$, because the equations \eqref{eqn:conjugatemorphism} still hold if we replace $\Phi$ by $\underline{\Phi}$, $\underline{M}^1$ by $M^1$, and $M^2$ by $\underline{M}^2$.
\end{definition}

\begin{lemma}
\label{lem:conjminus}
The following diagram commutes (note the minus sign):
\[
\begin{CD}
C^\ca_*(\underline{A}_1,\mc{P}_1) @>{-\underline{\Phi}_\sharp}>> C^\ca_*(\underline{A}_2,\mc{P}_2)\\
@V{\tau}VV @VV{\tau}V \\
C^\ca_*(A_1,\mc{P}_1) @>{\Phi_\sharp}>> C^\ca_*(A_2,\mc{P}_2). 
\end{CD}
\]
\end{lemma}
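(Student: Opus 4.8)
The statement to prove is Lemma~\ref{lem:conjminus}, asserting that $\tau \circ (-\underline{\Phi}_\sharp) = \Phi_\sharp \circ \tau$ as maps $C^\ca_*(\underline{A}_1,\mc{P}_1) \to C^\ca_*(A_2,\mc{P}_2)$. The plan is to track signs carefully through the definitions of $\Phi_\sharp$, $\underline{\Phi}_\sharp$, and $\tau$, reducing everything to a comparison of the cascade moduli spaces defining these chain maps. Recall from \S\ref{sec:inducedmap} that $\Phi_\sharp$ is, up to the involution $\tau$, the off-diagonal block of the cascade differential $\partial$ on $\underline{A}_1 \sqcup_\Phi A_2$ mapping $C^\ca_*(\underline{A}_1,\mc{P}_1)$ to $C^\ca_*(A_2,\mc{P}_2)$; similarly $\underline{\Phi}_\sharp$ is the off-diagonal block of the cascade differential on $\underline{A}_1 \sqcup_{\underline{\Phi}} \underline{A}_2$. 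So the first step is to express both $\Phi_\sharp$ and $\underline{\Phi}_\sharp$ explicitly as signed counts of hybrid cascades. A hybrid cascade from a generator over $x_1 \in X_1$ to a generator over $x_2 \in X_2$ consists of some factors in the moduli spaces of $\underline{A}_1$ (i.e. $\underline{M}^1_{d}$), one factor in $\Phi_d$ (resp. $\underline{\Phi}_d$), and some factors in the moduli spaces of $A_2$ (resp. $\underline{A}_2$), glued by cyclic fiber products with point constraints depending on whether the endpoints are hat or check generators.

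Next I would compare the two chain maps cascade-by-cascade. A hybrid cascade $(u_1,\ldots,u_k)$ contributing to $\underline{\Phi}_\sharp$ differs from the corresponding cascade contributing to $\Phi_\sharp$ in exactly two ways: the $\Phi$-factor appears as $\underline{\Phi}_{d_\Phi} = (-1)^{d_\Phi}\Phi_{d_\Phi}$, and each $A_2$-factor appears as $\underline{M}^2_{d_j} = (-1)^{d_j} M^2_{d_j}$. The $\underline{A}_1$-factors are the same in both. Hence the total sign discrepancy between a hybrid cascade for $\underline{\Phi}_\sharp$ and the same cascade for $\Phi_\sharp$ is $(-1)^{d_\Phi + \sum_j d_j}$, where the sum is over the $A_2$-factors, i.e. $(-1)^{D}$ where $D$ is the total moduli-space dimension contributed by the $\Phi$-factor together with all $A_2$-factors. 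The key observation is then to relate $D$ mod $2$ to the gradings of the endpoints using the Grading axioms \eqref{eqn:grading} and \eqref{eqn:gradingphi}: for a cascade counted by the relevant differential, the total moduli-space dimension (after accounting for point constraints) equals the number of point constraints, and unwinding the grading shifts in Definition~\ref{def:A1A2} (where $|x_1| = |x_1|_1 + 1$) shows that $D$ is congruent mod $2$ to the appropriate combination of endpoint gradings — the same combination that governs whether $\tau$ acts by $+1$ or $-1$ on each end, as in Lemma~\ref{lem:tauacm}.

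Putting this together: writing a hybrid cascade as starting at $\widetilde{x}_1$ (a generator of $C^\ca_*(\underline{A}_1)$, before applying $\tau$) and ending at $\widetilde{x}_2$, I would show that the sign by which $\underline{\Phi}_\sharp$ counts it, times the sign $\tau$ attaches at the $x_2$-end, equals $(-1)$ times (the sign by which $\Phi_\sharp$ counts it, times the sign $\tau$ attaches at the $x_1$-end). Concretely this amounts to verifying an identity of the form $(-1)^{D} \cdot \epsilon_2(\widetilde{x}_2) = -\,\epsilon_1(\widetilde{x}_1)$ for each cascade type, where $\epsilon_i \in \{\pm 1\}$ records whether the relevant end is a hat generator (on which $\tau$ is $-1$) or a check generator (on which $\tau$ is $+1$). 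This is exactly the kind of bookkeeping already carried out in the proof of Lemma~\ref{lem:tauacm} — indeed the extra global minus sign is the natural analogue of the minus sign in $\partial\tau = -\tau\underline{\partial}$, and it can be seen as coming from the grading shift on the $X_1$-part in Definition~\ref{def:A1A2} combined with the fact that the $\Phi$-factor shifts grading by one (the $+1$ in \eqref{eqn:gradingphi}).

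The main obstacle will be purely organizational rather than conceptual: there are four cases (hat/check at the source crossed with hat/check at the target), and in each case one must correctly count point constraints and track how they enter $D$. I expect the cleanest route is to avoid case analysis by instead using the block-matrix formalism of \S\ref{sec:inducedmap} directly: the morphism $\underline{\Phi}$ from $A_1$ to $\underline{A}_2$ gives a Morse-Bott system $\underline{A}_1 \sqcup_{\underline{\Phi}} \underline{A}_2$, which is literally the conjugate, in the sense of \S\ref{sec:conjugate}, of the Morse-Bott system $\underline{A}_1 \sqcup_\Phi A_2$ (every moduli space lying in the $A_2$-part or crossing from $X_1$ to $X_2$ gets its orientation multiplied by $(-1)^{\dim}$, while the $\underline{A}_1$-part is unchanged — one checks this matches the conjugation recipe because the $X_1$ generators carry the shifted grading $|x_1|_1 + 1$). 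Then the already-observed fact that conjugation of a Morse-Bott system commutes with $\tau$ on the cascade complex, together with the block-triangular form of $\partial$, immediately yields the claimed commuting square with its minus sign. I would write the proof this way, invoking the discussion of \S\ref{sec:conjugate} rather than redoing the sign count by hand.
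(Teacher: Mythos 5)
Your preferred route is the paper's: interpret $\underline{\Phi}_\sharp$ (up to composing with $\tau$) as the off-diagonal block of the cascade differential of the \emph{conjugate} of $\underline{A}_1\sqcup_\Phi A_2$, and then apply Lemma~\ref{lem:tauacm} to $A=\underline{A}_1\sqcup_\Phi A_2$, comparing off-diagonal blocks in $\partial\tau=-\tau\underline{\partial}$. But your justification of the key identification is wrong, and the same error also sinks your fallback computation: you leave the $A_1$-part unconjugated. Conjugation in the sense of \S\ref{sec:conjugate} multiplies the orientation of \emph{every} $d$-dimensional moduli space by $(-1)^d$, independently of any grading; hence the conjugate of $\underline{A}_1\sqcup_\Phi A_2$ has $X_1$-internal moduli spaces $M^1_d$ (the double conjugation cancels), $X_2$-internal spaces $\underline{M}^2_d$, and cross spaces $\underline{\Phi}_d$. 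This is \emph{not} literally the system $\underline{A}_1\sqcup_{\underline{\Phi}}\underline{A}_2$ obtained by feeding $\underline{\Phi}\colon A_1\to\underline{A}_2$ into Definition~\ref{def:A1A2}, whose $X_1$-part is $\underline{M}^1_d$; the grading shift $|x_1|_1+1$ has no bearing on this. The difference is not cosmetic: the two systems have genuinely different off-diagonal blocks on columns indexed by check generators, because a contributing cascade can have its constrained one-dimensional first factor inside an $X_1$-internal moduli space $M^1_1(x_1,p_{x_1},y)$ with $y\in X_1$, and that factor changes sign under conjugation of the $A_1$-part.

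The same point defeats the cascade-by-cascade bookkeeping of your first route. Comparing $(\underline{M}^1,\underline{M}^2,\underline{\Phi})$ with $(\underline{M}^1,M^2,\Phi)$, the per-cascade discrepancy is indeed $(-1)^D$ with $D$ the total dimension of the $\Phi$-factor plus the $A_2$-factors; but $D$ mod $2$ is \emph{not} determined by the endpoint generators. By the Grading axioms, $D\equiv |y_*|_1-|x_2|_2+1$, where $y_*\in X_1$ is the element at which the cascade crosses into $X_2$, and this varies within a single matrix entry: in the $\widecheck{x}_1\to\widecheck{x}_2$ coefficient, a cascade beginning with $u_1\in M^1_1(x_1,p_{x_1},y)$ followed by a rigid $\Phi_0$-factor has $D=0$, while one consisting of a single $u_1\in\Phi_1(x_1,p_{x_1},x_2)$ has $D=1$ (and both types are grading-compatible for the same $x_1,x_2$). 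So no uniform sign relation of the form $(-1)^D\epsilon_2=-\epsilon_1$ can hold entrywise, and the lemma would actually fail for the map defined from $(\underline{M}^1,\underline{M}^2,\underline{\Phi})$. The repair is exactly the paper's move: $\underline{\Phi}_\sharp$ is to be computed from the full conjugate $(M^1,\underline{M}^2,\underline{\Phi})$ of $\underline{A}_1\sqcup_\Phi A_2$ (this is the reading the paper states in its proof and the one its block identification in the proof of Proposition~\ref{prop:functoriality} requires). Then every factor of every cascade is conjugated, the discrepancy becomes $(-1)^{\mathrm{total\ dimension}}$, i.e.\ $(-1)^{\#\mathrm{point\ constraints}}$, which is determined by the hat/check types of the two ends, and that parity statement is precisely Lemma~\ref{lem:tauacm} applied to $\underline{A}_1\sqcup_\Phi A_2$; note also that the relevant fact is the \emph{anti}commutation $\partial\tau=-\tau\underline{\partial}$, which is where the minus sign in the square comes from.
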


\begin{proof}
The map $\underline{\Phi}_\sharp$ is defined from the cascade differential for the conjugate of the Morse-Bott system $\underline{A}_1\sqcup_\Phi A_2$. The lemma then follows from Lemma~\ref{lem:tauacm} applied to $A=\underline{A}_1\sqcup_\Phi A_2$.
\end{proof}

\subsection{Functoriality}

We are now ready to prove the following key result.

\begin{proposition}
\label{prop:functoriality}
Let $\Phi:A_1\to A_2$ and $\Psi:A_2\to A_3$ be composable morphisms of Morse-Bott systems. Let $(\mc{P}_1,\mc{P}_2,\mc{P}_3)$ be generic choices as needed to define the chain complexes $C^\ca_*(A_i,\mc{P}_i)$. Then
\[
(\Psi\circ\Phi)_*^{\mc{P}_3,\mc{P}_1} = \Psi^{\mc{P}_3,\mc{P}_2}_*\circ \Phi^{\mc{P}_2,\mc{P}_1}_*: H^\ca_*(A_1,\mc{P}_1) \longrightarrow H^\ca_*(A_3,\mc{P}_3).
\]
\end{proposition}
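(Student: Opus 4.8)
The plan is to imitate the construction of $\Phi_\sharp$ in \S\ref{sec:inducedmap}: I will assemble $A_1$, $A_2$, $A_3$, $\Phi$, $\Psi$ and the composition data into a single Morse-Bott system, and then read off the desired identity from the equation $\partial^2=0$ for its cascade differential.

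The first step is to use composability of $\Phi$ and $\Psi$ to promote $\Psi$ to a morphism $\chi$ from the Morse-Bott system $\underline A_1\sqcup_\Phi A_2$ of Definition~\ref{def:A1A2} to $A_3$. On the $X_2$-part of $\underline A_1\sqcup_\Phi A_2$ the morphism $\chi$ is simply $\Psi$ (so the Compactification axiom for $\chi$ there reduces to that for $\Psi$), while on the $X_1$-part it is assembled from the fiber products $\Phi_{d_1}(x_1,x_2)\times_{S_2(x_2)}\Psi_{d_2}(x_2,x_3)$ together with an auxiliary $\R$-factor, in the same spirit as \eqref{eqn:identityextra} in the definition of the identity morphism; the grading shift built into $\underline A_1\sqcup_\Phi A_2$ forces the extra $\R$. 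Checking that $\chi$ is a genuine morphism --- in particular the Compactification axiom for its $X_1\to X_3$ moduli spaces, where the ``extra points'' of \S\ref{sec:defmbs} and the composability conditions (c)--(e) come into play --- is essentially the content of Proposition~\ref{prop:composition}, supplemented by the transversality needed where the $\R$-parameter degenerates.

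Next I would form the Morse-Bott system $B=\underline{\left(\underline A_1\sqcup_\Phi A_2\right)}\sqcup_\chi A_3$, so that $X(B)=X_1\sqcup X_2\sqcup X_3$ with gradings shifted up by $2$, $1$, $0$; that $B$ is a Morse-Bott system follows from equations \eqref{eqn:conjugatemorphism} applied to both $\Phi$ and $\Psi$, together with the construction of $\chi$. For generic $(\mc P_1,\mc P_2,\mc P_3)$ its cascade differential is block-lower-triangular for the filtration $C^\ca_*(A_3,\mc P_3)\subset C^\ca_*(\underline A_2\sqcup_\Psi A_3)\subset C^\ca_*(B)$, of the form
\[
\partial_B=\begin{pmatrix}\partial_{(1)}&0&0\\ a&\partial_{(2)}&0\\ c&b&\partial_{(3)}\end{pmatrix},
\]
where the diagonal blocks are conjugates of the cascade differentials of $A_1$, $A_2$, $A_3$; the blocks $a$ and $b$ are the chain maps $\Phi_\sharp$ and $\Psi_\sharp$ respectively, up to the involution $\tau$ of \S\ref{sec:conjugate} and the conjugations appearing in their definitions; and $c$ is the new block. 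The definition of $\chi$ is engineered so that, once interpreted through the identities $\tau\underline\partial=-\partial\tau$ (Lemma~\ref{lem:tauacm}) and Lemma~\ref{lem:conjminus}, the block $c$ is a chain homotopy relating $(\Psi\circ\Phi)_\sharp$ and $\Psi_\sharp\circ\Phi_\sharp$: the ``$(\Psi\circ\Phi)_\sharp$'' contribution arises from one end of the auxiliary $\R$-factor in the $X_1\to X_3$ moduli spaces of $\chi$ (where the $\Phi$-flow is glued directly to the $\Psi$-flow), and the ``$\Psi_\sharp\circ\Phi_\sharp$'' contribution from the other end (a cascade that has broken at a generator of $X_2$). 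The bottom-left block of $\partial_B^2=0$ then gives precisely this chain-homotopy equation, and passing to homology yields $(\Psi\circ\Phi)_*^{\mc P_3,\mc P_1}=\Psi_*^{\mc P_3,\mc P_2}\circ\Phi_*^{\mc P_2,\mc P_1}$.

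I expect the main obstacle to be the construction and verification of the morphism $\chi$, and in particular getting its $X_1\to X_3$ moduli spaces and their compactifications exactly right: this is where composability of $\Phi$ and $\Psi$ is genuinely used, and the delicacy is that these moduli spaces carry an extra $\R$-factor whose two ends must account respectively for the direct composition and for the broken configurations. The remaining work is the (routine but lengthy) bookkeeping of the conjugations $\underline{(\cdot)}$ and the involution $\tau$, needed to ensure that the signs in $\partial_B^2=0$ assemble into the functoriality identity rather than its negative; the formalism of \S\ref{sec:conjugate}--\S\ref{sec:inducedmap}, and especially Lemmas~\ref{lem:tauacm} and~\ref{lem:conjminus}, exists precisely for this purpose.
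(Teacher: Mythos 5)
There is a genuine gap, and it sits exactly where you predicted the ``main obstacle'' would be: the object $\chi$ you want to build does not exist as a morphism, and the system $B$ cannot be an honest Morse-Bott system. Concretely, the grading forces $\chi_d(x_1,x_3)=\R\times(\Psi\circ\Phi)_{d-1}(x_1,x_3)$ with evaluation maps pulled back from $(\Psi\circ\Phi)_{d-1}$, constant along the $\R$-factor. Then the Compactification axiom \eqref{eqn:compactiphi1} for $\chi$ at a pair $(x_1,x_3)$ demands that $\overline{\chi}_1(x_1,x_3)$ have boundary exactly \emph{one} copy of $\coprod_{d_1+d_2=1}\Phi_{d_1}(x_1,x_2)\times_{S_2(x_2)}\Psi_{d_2}(x_2,x_3)$ (the other terms are empty by composability (b)), whereas any compactification of $\R\times(\Psi\circ\Phi)_0(x_1,x_3)$ either closes each line up into a circle (no boundary) or adds \emph{two} ends per point, both limiting to the same broken configuration; unlike the situation of Remark~\ref{rem:identity}, there is no other piece of the moduli space onto which the surplus ends can be glued. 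More conceptually, no genuine Morse-Bott system $B$ with your diagonal and subdiagonal blocks can exist when the proposition has content: the bottom-left block of $\partial_B^2=0$ reads $c\,\partial_{(1)}+\partial_{(3)}c+b\,a=0$, with no $(\Psi\circ\Phi)_\sharp$ term anywhere; after untwisting by $\tau$ and the conjugations this would say $\Psi_\sharp\circ\Phi_\sharp$ is null-homotopic, i.e.\ $\Psi_*\circ\Phi_*=0$, which is absurd (take $\Phi,\Psi$ quasi-isomorphisms). Your proposed mechanism for producing $(\Psi\circ\Phi)_\sharp$ from ``one end of the auxiliary $\R$-factor'' also cannot function at the level of the rigid counts: since $e_\pm$ on $\R\times(\Psi\circ\Phi)_{d-1}$ are constant in $\R$, these spaces generically miss all base-point constraints and have $\chi_0=\emptyset$, so they contribute nothing to the zero-dimensional cascade counts defining the block $c$.

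The paper's proof goes the opposite way: it deliberately sets the $X_1\to X_3$ moduli spaces of the triple system $A_1\sqcup_{\underline\Phi}\underline A_2\sqcup_\Psi A_3$ equal to $\emptyset$, accepts that parts (b)--(d) of the Compactification axiom then fail for pairs $(x_1,x_3)$, and \emph{computes} the resulting failure of $\partial^2=0$. The missing boundary points form the set $Z(\widetilde x_1,\widetilde x_3)$ of \eqref{eqn:defZ}, which is identified, with signs as in \eqref{eqn:Zor}, with the rigid cascades of $\underline A_1\sqcup_{\Psi\circ\Phi}A_3$; this gives $\partial^2=-(\Psi\circ\Phi)_\sharp$ on the corner block \eqref{eqn:d2failure}, and then the block-matrix computation together with Lemma~\ref{lem:conjminus} yields $K\partial_{1}+\partial_{3}K=\Psi_\sharp\circ\Phi_\sharp-(\Psi\circ\Phi)_\sharp$, where $K$ is the count of hybrid cascades through $X_2$. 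So the chain homotopy is the through-$X_2$ cascade count itself, and the term $(\Psi\circ\Phi)_\sharp$ enters not as a boundary of any honest compactification but as the measured defect of $\partial^2$; this sign-sensitive identification of $Z$ with $(\Psi\circ\Phi)_\sharp$ is the core of the argument and is what your proposal is missing.
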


The idea of the proof is to define a chain homotopy between $(\Psi\circ\Phi)_\sharp$ and $\Psi_\sharp\circ\Phi_\sharp$, by counting ``hybrid'' cascades that consist of some elements of $M^1_d$, followed by an element of $\Phi_d$, followed by some elements of $M^2_d$, followed by an element of $\Psi_d$, followed by some elements of $M^3_d$, with total moduli space dimension zero. The chain homotopy equation then comes from considering such cascades with total moduli space dimension one. We will again use a shortcut to simplify the notation.

\begin{proof}[Proof of Proposition~\ref{prop:functoriality}.]
We define an ``almost'' Morse-Bott system
\[
A = A_1\sqcup_{\underline{\Phi}} \underline{A}_2 \sqcup_\Psi A_3 = (X,|\cdot|,S,\mc{O},M_*,e_\pm)
\]
as follows. This will satisfy all of the axioms for a Morse-Bott system, except for a partial failure of the Compactness axiom.

We take $X=X_1\sqcup X_2\sqcup X_3$.

For $x_1\in X_1$ we define $|x_1|=|x_1|_1$, $S_{x_1}=S^1_{x_1}$, and $\mc{O}_{x_1}=\mc{O}^1_{x_1}$. For $x_2\in X_2$ we take $|x_2|=|x_2|_2+1$, $S_{x_2}=S^2_{x_2}$, and $\mc{O}_{x_2}=\mc{O}^2_{x_2}$. For $x_3\in X_3$ we take $|x_3|=|x_3|_3$, $S_{x_3}=S^3_{x_3}$, and $\mc{O}_{x_3} = \mc{O}^3_{x_3}$.

For $x_1,x_1'\in X_1$, $x_2,x_2'\in X_2$, and $x_3,x_3'\in X_3$, we define \[
\begin{split}
M_d(x_1,x_1') &= M^1_d(x_1,x_1'),\\
M_d(x_2,x_2') &= \underline{M}^2_d(x_2,x_2'),\\
M_d(x_3,x_3') &= M^3_d(x_3,x_3'),\\
M_d(x_1,x_2) &=\underline{\Phi}_d(x_1,x_2),\\
M_d(x_2,x_3) &=\Psi_d(x_2,x_3),\\ 
M_d(x_1,x_3)=M_d(x_2,x_1) &= M_d(x_3,x_1) = M_d(x_3,x_2)=\emptyset.
\end{split}
\]
The evaluation maps $e_\pm$ on these moduli spaces are the same as the evaluation maps for $A_1$, $\underline{A}_2$, $A_3$, $\underline{\Phi}$, and $\Psi$.

Observe that $A$ satisfies all of the axioms for a Morse-Bott system, except that parts (b)--(d) of the Compactness axiom fail when applied to $x_1\in X_1$ and $x_3\in X_3$. Namely, parts (b)--(d) of Compactness require that we have
\begin{equation}
\tag{?!}
\partial\overline{M}_1(x_1,x_3)
=
\coprod_{\substack{x_2\in X_2 \\ d_1+d_2=1}}
(-1)^{d_1}
\underline{\Phi}_{d_1}(x_1,x_2)\times_{S(x_2)}\Psi_{d_2}(x_2,x_3),
\end{equation}
and similar equations with generic point constraints on $S_{x_1}$ and/or $S_{x_3}$. In fact, however, the left hand side of each of these equations is empty, since all moduli spaces from $x_1$ to $x_3$ are empty.

What happens if we try to define a cascade ``differential'' $\partial$ for $(A,(\mc{P}_1,\mc{P}_2,\mc{P}_3))$ anyway, despite the above failure of compactness? Proposition~\ref{prop:cascadekey}(a) still holds, so we obtain a well-defined linear map
\[
\partial: C^\ca_*(A,(\mc{P}_1,\mc{P}_2,\mc{P}_3)) \longrightarrow C^\ca_{*-1}(A,(\mc{P}_1,\mc{P}_2,\mc{P}_3)).
\]
However we no longer know that $\partial^2=0$. In particular Proposition~\ref{prop:cascadekey}(b) no longer holds when $x_+=x_1\in X_1$ and $x_-=x_3\in X_3$. We will need to compute the precise error in order to find out what $\partial^2$ actually is.

Fix $\widetilde{x}_1$ to denote $\widecheck{x}_1$ or $\widehat{x}_1$, and fix $\widetilde{x}_3$ to denote $\widecheck{x}_3$ or $\widehat{x}_3$. The part of the proof of Proposition~\ref{prop:cascadekey}(b) that is no longer valid is equation \eqref{eqn:bpc}, which in this case would be
\begin{equation}
\tag{?!}
\begin{split}
\partial\widetilde{M}^\ca_1(\widetilde{x}_1,\widetilde{x}_3) =& \coprod_{x'\neq x_1,x_3} M_0^\ca(\widetilde{x}_1,\widecheck{x}')\times_{S(x')} M^\ca_1(\widehat{x}',\widetilde{x}_3)\\
& - \coprod_{x'\neq x_1,x_3} M_1^\ca(\widetilde{x}_1,\widecheck{x}') \times_{S(x')} M^\ca_0(\widehat{x}',\widetilde{x}_3).
\end{split}
\end{equation}
In the present case, the left hand side is missing points on the right hand side in which $x'=x_2\in X_2$ and the cascades do not involve any other elements of $X_2$. More precisely, define
\begin{equation}
\label{eqn:defZ}
\begin{split}
Z(\widetilde{x}_1,\widetilde{x}_3) = & \coprod_{x_2\in X_2} \widehat{M}_0^\ca(\widetilde{x}_1,\widecheck{x}_2)\times_{S(x_2)} \widehat{M}^\ca_1(\widehat{x}_2,\widetilde{x}_3)
\\
&- \coprod_{x_2\in X_2} \widehat{M}_1^\ca(\widetilde{x}_1,\widecheck{x}_2) \times_{S(x_2)} \widehat{M}^\ca_0(\widehat{x}_2,\widetilde{x}_3),
\end{split}
\end{equation}
where we use the notation $\widehat{M}^\ca$ to indicate cascades which involve only one point in $X_2$. We can then correct the previous equation by adding $Z(\widetilde{x}_1,\widetilde{x}_3)$ to the left hand side, giving
\begin{equation}
\label{eqn:bandaid}
\begin{split}
\partial\widetilde{M}^\ca_1(\widetilde{x}_1,\widetilde{x}_3) \sqcup Z(\widetilde{x}_1,\widetilde{x}_3) =& \coprod_{x'\neq x_1,x_3} M_0^\ca(\widetilde{x}_1,\widecheck{x}')\times_{S(x')} M^\ca_1(\widehat{x}',\widetilde{x}_3)\\
& - \coprod_{x'\neq x_1,x_3} M_1^\ca(\widetilde{x}_1,\widecheck{x}') \times_{S(x')} M^\ca_0(\widehat{x}',\widetilde{x}_3).
\end{split}
\end{equation}
The rest of the proof of Proposition~\ref{prop:cascadekey}(b) now goes through. However since the right hand side of \eqref{eqn:bandaid} is used to cancel some ends of the moduli space, and since we had to add the points in $Z(\widetilde{x}_1,\widetilde{x}_3)$ to obtain this right hand side, the result is that we obtain a compactification $\overline{M}^\ca_1(\widetilde{x}_1,\widetilde{x}_3)$ of $M^\ca_1(\widetilde{x}_1,\widetilde{x}_3)$ with $-Z(\widetilde{x}_1,\widetilde{x}_3)$ added to its boundary. That is,
\begin{equation}
\label{eqn:ccc}
\begin{split}
\partial \overline{M}^\ca_1(\widetilde{x}_1,\widetilde{x}_3) = & \coprod_{y\in X}M^\ca_0(\widetilde{x}_1,\widecheck{y}) \times M^\ca_0(\widecheck{y},\widetilde{x}_3)\\
&\bigsqcup \coprod_{y\in X} M^\ca_0(\widetilde{x}_1,\widehat{y}) \times M^\ca_0(\widehat{y},\widetilde{x}_3)\\
&-Z(\widetilde{x}_1,\widetilde{x}_3).
\end{split}
\end{equation}

Let $\widehat{M}^\ca_0(\widetilde{x}_1,\widetilde{x}_3)$ denote the moduli space of cascades in $\underline{A}_1\sqcup_{\Psi\circ\Phi} A_3$ from $\widetilde{x}_1$ to $\widetilde{x}_3$ in which each factor lives in a zero dimensional moduli space. Observe that as a set, we have $Z(\widetilde{x}_1,\widetilde{x}_3) = \widehat{M}^\ca_0(\widetilde{x}_1,\widetilde{x}_3)$.
We claim that as an oriented $0$-manifold, we have
\begin{equation}
\label{eqn:Zor}
Z(\widetilde{x}_1,\widetilde{x}_3)
 =
\left\{
\begin{array}{cl}
-\widehat{M}^\ca_0(\widetilde{x}_1,\widetilde{x}_3), & \widetilde{x}_1=\widecheck{x}_1,\\
\widehat{M}^\ca_0(\widetilde{x}_1,\widetilde{x}_3), & \widetilde{x}_1 = \widehat{x}_1.
\end{array}
\right.
\end{equation}
To see how the orientations work, consider a cascade in
\[
(u_0,\ldots,u_k) \in \widehat{M}^\ca_0(\widetilde{x}_1,\widetilde{x}_3).
\]
Here $u_j\in (\Psi\circ\Phi)_0$ for some $j\in\{0,\ldots,k\}$; $u_i\in\underline{M}^1$ for all $i<j$; and $u_i\in M^3$ for all $i>j$.
Assume for simplicity that $u_j$ does not have any point constraints. Then all factors $u_i$ for $i>0$ are counted with the same signs in $\widehat{M}^\ca_0(\widetilde{x}_1,\widetilde{x}_3)$ and in $Z(\widetilde{x}_1,\widetilde{x}_3)$; the minus sign in the second line of \eqref{eqn:defZ} arises because cascades in $\widehat{M}^\ca_1(\widetilde{x}_1,\widehat{x}_2)$ are oriented using $\underline{\Phi}$ instead of $\Phi$. The factor $u_0$ is counted with the same sign in $\widehat{M}^\ca_0(\widetilde{x}_1,\widetilde{x}_3)$ and $Z(\widetilde{x}_1,\widetilde{x}_3)$ when $\widetilde{x}_1=\widehat{x}_1$, and with opposite signs when $\widetilde{x}_1=\widecheck{x}_1$. The reason is that in the latter case, $u_0\in \underline{M}^1_1=-M^1_1$. The cases where $u_j$ has point constraints are treated similarly. 

Combining \eqref{eqn:Zor} with \eqref{eqn:ccc}, it follows that the part of $\partial^2$ mapping from $C_*^\ca(A_1,\mc{P}_1)$ to $C_*^\ca(A_3,\mc{P}_3)$ is given by
\begin{equation}
\label{eqn:d2failure}
\partial^2 = -(\Psi\circ\Phi)_\sharp.
\end{equation}
Now let $K$ denote the portion of $\partial$ mapping from $C^\ca_*(A_1,\mc{P}_1)$ to $C^\ca_*(A_3,\mc{P}_3)$. We can then write $\partial_\ca$ in block matrix form as
\[
\partial = 
\begin{pmatrix}
\partial_{1} & 0 & 0\\
\underline{\Phi}_\sharp\tau & \underline{\partial}_{2} & 0\\
K & \Psi_\sharp\tau & \partial_{3}
\end{pmatrix}.
\]
Squaring this and comparing the lower left entry with \eqref{eqn:d2failure}, we obtain
\[
K\partial_{1} + \Psi_\sharp\tau\underline{\Phi}_\sharp\tau + \partial_{3}K = -(\Psi\circ\Phi)_\sharp.
\]
By Lemma~\ref{lem:conjminus}, we can rewrite this as
\[
K\partial_{1} + \partial_{3}K = \Psi_\sharp\circ\Phi_\sharp - (\Psi\circ\Phi)_\sharp.
\]
Thus $K$ is a chain homotopy between $\Psi_\sharp\circ\Phi_\sharp$ and $(\Psi\circ\Phi)_\sharp$.
\end{proof}

\subsection{Homotopy invariance}

Let $A_1=(X_1,|\cdot|_1,S_1,\mc{O}^1,M^1_*,e^1_\pm)$ and $A_2=(X_2,|\cdot|_2,S_2,\mc{O}^2,M^2_*,e^2_\pm)$ be Morse-Bott systems. We now prove:

\begin{proposition}
\label{prop:homotopyinvariance}
Let $\Phi,\Phi':A_1\to A_2$ be morphisms of Morse-Bott systems. Suppose there exists a homotopy $K$ from $\Phi$ to $\Phi'$ as in Definition~\ref{def:homotopy}. Let $(\mc{P}_1,\mc{P}_2)$ be a generic pair of choices as needed to define the cascade chain complexes $C^\ca_*(A_1,\mc{P}_1)$ and $C^\ca_*(A_2,\mc{P}_2)$. Then
\[
\Phi_* = (\Phi')_*: H_*^\ca(A_1,\mc{P}_1) \longrightarrow H_*^\ca(A_2,\mc{P}_2).
\]
\end{proposition}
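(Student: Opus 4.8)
The plan is to imitate the proof of Proposition~\ref{prop:functoriality}, with the homotopy $K$ playing the role that the middle morphism plays there. First I would assemble, out of $A_1$, $A_2$ and $K$, an ``almost'' Morse-Bott system $A$ with underlying set $X = X_1\sqcup X_2$: for $x_1,x_1'\in X_1$ the moduli spaces $M_d(x_1,x_1')$ are an appropriate conjugate of $M^1_d(x_1,x_1')$, for $x_2,x_2'\in X_2$ the spaces $M_d(x_2,x_2')$ are an appropriate conjugate of $M^2_d(x_2,x_2')$, for $x_1\in X_1$ and $x_2\in X_2$ the space $M_d(x_1,x_2)$ is the corresponding conjugate of $K_d(x_1,x_2)$, and $M_d(x_2,x_1)=\emptyset$; the gradings, local systems and evaluation maps are inherited. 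The conjugations are chosen in the pattern used to form $\underline{A}_1\sqcup_\Phi A_2$ in \S\ref{sec:inducedmap} and $A_1\sqcup_{\underline{\Phi}}\underline{A}_2\sqcup_\Psi A_3$ in the proof of Proposition~\ref{prop:functoriality}, using the homotopy analogue of the conjugate-morphism identities \eqref{eqn:conjugatemorphism}. Note that the homotopy grading convention differs from the morphism convention by one, so here no grading shift on the $X_1$-summand is needed, and in particular the cascade chain module of $A$ is $C^\ca_*(A_1,\mc{P}_1)\oplus C^\ca_*(A_2,\mc{P}_2)$ as graded modules. As in Proposition~\ref{prop:functoriality}, $A$ satisfies Grading, Finiteness and Fiber Product Transversality (these follow from the corresponding properties of $A_1$, $A_2$ and $K$, the Fiber Product Transversality of $K$ being part of Definition~\ref{def:homotopy}), together with parts (a) and (e) of the Compactification axiom; it satisfies parts (b)--(d) of Compactification for every pair except the pairs $(x_1,x_2)$ with $x_1\in X_1$, $x_2\in X_2$, where, beyond the fiber-product boundary prescribed by the axiom, the compactifications $\overline{K}_d$ carry the extra boundary terms $\mp\Phi_\bullet(x_1,x_2)\pm\Phi'_\bullet(x_1,x_2)$ visible in \eqref{eqn:compactopy1}, \eqref{eqn:compactopy2-}, \eqref{eqn:compactopy2+} and \eqref{eqn:compactopy3}. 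This is the controlled failure of compactness, playing the role of the ``missing'' term $Z$ in Proposition~\ref{prop:functoriality}.

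Next I would form the cascade ``differential'' $\partial$ of $A$ with respect to $(\mc{P}_1,\mc{P}_2)$; since $M_d(x_2,x_1)=\emptyset$, it has block-triangular form $\partial=\left(\begin{smallmatrix}\partial^{(1)}&0\\ L&\partial^{(2)}\end{smallmatrix}\right)$, where $\partial^{(i)}$ is the (possibly conjugated) cascade differential of $A_i$ and $L$ counts hybrid cascades containing exactly one $K$-factor. Proposition~\ref{prop:cascadekey}(a) still applies, so these are well defined, but $\partial^2\neq 0$. To compute $\partial^2$ I would re-run the proof of Proposition~\ref{prop:cascadekey}(b) for a pair $(\widetilde{x}_1,\widetilde{x}_2)$ with $x_1\in X_1$, $x_2\in X_2$: the ends of the $1$-dimensional cascade moduli space of $A$ coming from the usual degenerations (the non-rigid factor hitting an end of its moduli space, two evaluation maps colliding, or an evaluation map hitting a base point) are glued and cancelled exactly as before, contributing the terms of $L\partial^{(1)}+\partial^{(2)}L$; but in addition, whenever the unique non-rigid factor is a $\overline{K}$-factor, its extra boundary $\mp\Phi_\bullet\pm\Phi'_\bullet$ survives, contributing a signed count of hybrid cascades that cross via a point of $\Phi_0$ with one sign and of $\Phi'_0$ with the opposite sign, each with the point constraints dictated by $\widetilde{x}_1$ and $\widetilde{x}_2$. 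Combinatorially, and after tracking orientations, this count is of the form $\pm\,\Phi_\sharp\tau\mp(\Phi')_\sharp\tau$ (with the involutions $\tau$ of \S\ref{sec:conjugate}). Thus the off-diagonal block of $\partial^2$ equals this expression, and comparing it with $L\partial^{(1)}+\partial^{(2)}L$ and invoking Lemmas~\ref{lem:tauacm} and~\ref{lem:conjminus} exactly as in the proof of Proposition~\ref{prop:functoriality} converts the relation into
\[
\partial_2 L' + L'\partial_1 = \Phi_\sharp - (\Phi')_\sharp
\]
for a suitable modification $L'$ of $L$, with the unconjugated differentials $\partial_1$, $\partial_2$. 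Hence $L'$ is a chain homotopy from $(\Phi')_\sharp$ to $\Phi_\sharp$, and therefore $\Phi_* = (\Phi')_*$ on cascade homology.

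No condition beyond the existence of the homotopy $K$ is needed. Definition~\ref{def:homotopy} already packages the Fiber Product Transversality of $K$ and the genericity conditions on $(p_1,p_2)$ that make all the fiber products — including the double and triple fiber products appearing when compactifying the hybrid cascades — cut out transversely, and genericity of $(\mc{P}_1,\mc{P}_2)$ handles the point constraints; part (e) of the homotopy Compactification axiom (together with part (e) for $A_1$ and $A_2$) ensures that no spurious points enter the various compactifications, which is needed just as in the proof of Proposition~\ref{prop:composition}.

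The step I expect to be the main obstacle is the orientation bookkeeping in the second paragraph. Two subtleties in particular require care: choosing the right pattern of conjugations of $A_1$, $A_2$ and $K$ in the definition of $A$ so that the off-diagonal block of $\partial^2$ emerges as honestly $\pm(\Phi_\sharp-(\Phi')_\sharp)$ rather than a $\tau$-twisted variant — this is the analogue of the orientation comparison \eqref{eqn:Zor} in the proof of Proposition~\ref{prop:functoriality} — and correctly tracking the sign of the extra $\Phi_\bullet,\Phi'_\bullet$ boundary, which flips between the $p_1$-constrained and $p_2$-constrained cases (compare the leading terms of \eqref{eqn:compactopy1} and \eqref{eqn:compactopy2-} with those of \eqref{eqn:compactopy2+} and \eqref{eqn:compactopy3}), reflecting Convention~\ref{convention:constraints} at the level of homotopies. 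Once these signs are pinned down, the rest is a routine adaptation of Propositions~\ref{prop:cascadekey} and~\ref{prop:functoriality}.
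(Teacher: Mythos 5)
Your overall strategy coincides with the paper's: glue $A_1$ and $A_2$ along the homotopy into an ``almost'' Morse-Bott system on $X_1\sqcup X_2$, note that only parts (b)--(d) of the Compactification axiom fail (for pairs $(x_1,x_2)$, because of the extra $\Phi,\Phi'$ boundary terms in \eqref{eqn:compactopy1}--\eqref{eqn:compactopy3}), and read off a chain homotopy from the off-diagonal block of $\partial^2$. The genuine gap is the step you yourself defer, and your stated plan for it points the wrong way. The signs in the homotopy Compactification axiom already match the Morse-Bott signs \eqref{eqn:compactification1}--\eqref{eqn:compactification3} as they stand --- unlike the morphism axiom, which matches only after conjugating $A_1$, as recorded in \eqref{eqn:conjugatemorphism} --- so the correct glued object is $A_1\sqcup_K A_2$ with \emph{no} conjugation of $M^1$, $M^2$, or $K$ (and a grading shift by $2$ on the $X_1$ summand, i.e.\ nothing mod $2$). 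If you instead imitate the pattern of $\underline{A}_1\sqcup_\Phi A_2$ and replace $M^1_d$ by $\underline{M}^1_d$ (or conjugate $K$), then the terms of \eqref{eqn:compactopy1} with $d_1=1$ acquire the wrong sign relative to \eqref{eqn:compactification1}, the glued object fails even the ``correct part'' of the Compactification axiom, and Proposition~\ref{prop:cascadekey} can no longer be invoked as stated; so ``choose the appropriate conjugations'' is not a detail but the place where an incorrect choice breaks the argument.

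With the unconjugated gluing, $K_\sharp$ is simply the off-diagonal block of the cascade ``differential'' (no modification $L'$ and no $\tau$ in its definition), and the content of the proof is the identification of the off-diagonal block of $\partial^2$ with $\Phi'_\sharp-\Phi_\sharp$. This is exactly the comparison your proposal does not carry out: $\Phi_\sharp$ and $\Phi'_\sharp$ were defined via the conjugated system $\underline{A}_1\sqcup_\Phi A_2$ together with the involution $\tau$, so one must check, separately for $\widetilde{x}_+=\widehat{x}_+$ and $\widetilde{x}_+=\widecheck{x}_+$, and in the latter case separately according to whether the first cascade factor lies in $M^1$ or in $\Phi$, that replacing $A_1$ by $\underline{A}_1$ and inserting $\tau$ changes the count by a sign that is the \emph{same} for all contributing cascades. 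Without this case analysis (which is the heart of the paper's verification of the analogue of \eqref{eqn:Kboundary}), your claim that the extra boundary contributes $\pm\,\Phi_\sharp\tau\mp(\Phi')_\sharp\tau$ with a uniform sign is unestablished; a generator-dependent sign would destroy the chain homotopy equation, so this bookkeeping is where the proof actually lives, not a routine afterthought.
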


To prove Proposition~\ref{prop:homotopyinvariance}, we define a chain homotopy
\[
K_\sharp: C^\ca_*(A_1,\mc{P}_1) \longrightarrow C^\ca_{*+1}(A_2,\mc{P}_2).
\]
from $\Phi_\sharp$ to ${\Phi'}_\sharp$. To do so, we define an ``almost'' Morse-Bott system
\[
A_1\sqcup_K A_2 = (X,|\cdot|,S,\mc{O},M_*,e_\pm),
\]
similarly to Definition~\ref{def:A1A2}, as follows. We take $X = X_1\sqcup X_2$. For $x_1\in X_1$ we define $|x_1|=|x_1|_1+2$, $S_{x_1}=S^1_{x_1}$, and $\mc{O}_{x_1}=\mc{O}_{x_1}^1$. For $x_2\in X_2$ we define $|x_2|=|x_2|_2$, $S_{x_2}=S^2_{x_2}$, and $\mc{O}_{x_2}=\mc{O}^2_{x_2}$. For $x_1,x_1'\in X_1$ and $x_2,x_2'\in X_2$, we define
\[
\begin{split}
M_d(x_1,x_1') &= M^1_d(x_1,x_1'),\\
M_d(x_2,x_2') &= M_d^2(x_2,x_2'),\\
M_d(x_1,x_2) &= K_d(x_1,x_2),\\
M_d(x_2,x_1) &= \emptyset.
\end{split}
\]
The evaluation maps $e_\pm$ on these moduli spaces are the same as the evaluation maps for $A_1$, $A_2$, and $K$.

As in Definition~\ref{def:A1A2}, $A_1\sqcup_K A_2$ almost satisfies the axioms for a Morse-Bott system, except that we do not have parts (b), (c), and (d) of the Compactification axiom, because of the extra terms involving $\Phi$ and $\Phi'$ in equations \eqref{eqn:compactopy1}, \eqref{eqn:compactopy2-}, \eqref{eqn:compactopy2+}, and \eqref{eqn:compactopy3}. In any case, since part (a) of Compactification holds, it still makes sense to define a cascade ``differential'' $\partial$. We write this in block matrix form as
\begin{equation}
\label{eqn:defKsharp}
\partial = \begin{pmatrix} \partial_{1} & 0 \\ K_\sharp & \partial_{2} \end{pmatrix},
\end{equation}
and this is the definition of $K_\sharp$.

\begin{lemma}
$K_\sharp$ satisfies the chain homotopy equation
\[
\partial_{2}K_\sharp + K_\sharp\partial_{1} = \Phi'_\sharp - \Phi_\sharp.
\]
\end{lemma}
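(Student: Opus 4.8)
The plan is to adapt the proof of Proposition~\ref{prop:functoriality}, using the ``almost'' Morse-Bott system $A_1\sqcup_K A_2$ and its cascade ``differential'' $\partial$ from \eqref{eqn:defKsharp}. First I would record that $A_1\sqcup_K A_2$ genuinely satisfies the Grading, Finiteness, and Fiber Product Transversality axioms together with part~(a) of the Compactification axiom; these follow from the corresponding properties of $A_1$, $A_2$, and $K$ exactly as in the proof of Proposition~\ref{prop:functoriality}, so that Proposition~\ref{prop:cascadekey}(a) applies and $\partial$ is well defined. Moreover $A_1\sqcup_K A_2$ restricts on $X_1$ to $A_1$ and on $X_2$ to $A_2$, with no conjugations inserted --- which is legitimate precisely because the fiber-product signs in the homotopy Compactification equations \eqref{eqn:compactopy1}--\eqref{eqn:compactopy3} already agree with the Morse-Bott Compactification signs \eqref{eqn:compactification1}--\eqref{eqn:compactification3} --- so in \eqref{eqn:defKsharp} the diagonal blocks are literally the cascade differentials $\partial_1$ and $\partial_2$. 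Hence $\partial^2$ vanishes except for the block $\partial_2 K_\sharp+K_\sharp\partial_1$ mapping $C_*^\ca(A_1,\mc{P}_1)$ to $C_*^\ca(A_2,\mc{P}_2)$, and it remains to compute the part of $\partial^2$ on that block directly.

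For this I would re-run the argument of Proposition~\ref{prop:cascadekey}(b) for the cascade moduli space $M^\ca_1(\widetilde{x}_1,\widetilde{x}_2)$ in $A_1\sqcup_K A_2$, with $x_1\in X_1$ and $x_2\in X_2$. Any cascade from $\widetilde{x}_1$ to $\widetilde{x}_2$ consists of a (possibly empty) string of $M^1$-factors, then exactly one $K$-factor, then a (possibly empty) string of $M^2$-factors, with exactly one factor non-rigid. When the non-rigid factor is an $M^1$- or $M^2$-factor, the argument of Proposition~\ref{prop:cascadekey}(b) applies verbatim and its fiber-product and point-collision ends either cancel in pairs or yield the terms in \eqref{eqn:caco}. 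The new phenomenon is that when the non-rigid factor is the $K$-factor, its compactification $\overline{K}_d$ has, besides the fiber-product boundary with adjacent factors, the extra boundary components ``$\mp\Phi_{\bullet}\sqcup\pm\Phi'_{\bullet}$'' appearing on the right-hand sides of \eqref{eqn:compactopy1}--\eqref{eqn:compactopy3}. These contribute extra boundary points of the resulting compactification $\overline{M}^\ca_1(\widetilde{x}_1,\widetilde{x}_2)$ that are not cancelled, so --- exactly as in \eqref{eqn:ccc} --- one obtains
\[
\partial\overline{M}^\ca_1(\widetilde{x}_1,\widetilde{x}_2)=\coprod_{y}M^\ca_0(\widetilde{x}_1,\widecheck{y})\times M^\ca_0(\widecheck{y},\widetilde{x}_2)\ \sqcup\ \coprod_{y}M^\ca_0(\widetilde{x}_1,\widehat{y})\times M^\ca_0(\widehat{y},\widetilde{x}_2)\ -\ Z(\widetilde{x}_1,\widetilde{x}_2),
\]
where $Z(\widetilde{x}_1,\widetilde{x}_2)$ collects those cascades in which the $K$-factor has been replaced by a factor in $\Phi_\bullet$ (carrying a minus sign, from the ``$-\Phi$'' terms) or in $\Phi'_\bullet$ (carrying a plus sign), with the appropriate point constraint in the constrained cases. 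Since $\overline{M}^\ca_1(\widetilde{x}_1,\widetilde{x}_2)$ is a compact $1$-manifold its boundary has signed count zero, so the signed count of the first two coproducts --- which is precisely the relevant matrix coefficient of $\partial^2$ --- equals the signed count of $Z(\widetilde{x}_1,\widetilde{x}_2)$.

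The remaining step is to identify $Z(\widetilde{x}_1,\widetilde{x}_2)$ with $\Phi'_\sharp-\Phi_\sharp$, in the spirit of \eqref{eqn:Zor}. As a set, the cascades in $Z$ whose distinguished factor lies in $\Phi_\bullet$ (resp.\ $\Phi'_\bullet$) are the $0$-dimensional ``hybrid'' cascades counted by $\Phi_\sharp$ (resp.\ $\Phi'_\sharp$), except that the $M^1$-factors here are the \emph{unconjugated} $M^1_d$, whereas $\Phi_\sharp$ and $\Phi'_\sharp$ are defined through the conjugated systems $\underline{A}_1\sqcup_\Phi A_2$ and $\underline{A}_1\sqcup_{\Phi'}A_2$ precomposed with $\tau$. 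I would therefore run the sign computation of Lemma~\ref{lem:tauacm} (as in the proof of Lemma~\ref{lem:conjminus}) to account for this re-conjugation by $\tau$, and combine it with the opposite signs ``$-\Phi$'' versus ``$+\Phi'$'' in \eqref{eqn:compactopy1}--\eqref{eqn:compactopy3} and with the orientation conventions of \S\ref{sec:conventions} and Convention~\ref{convention:constraints}; the net effect is that $Z(\widetilde{x}_1,\widetilde{x}_2)$ represents $\Phi'_\sharp-\Phi_\sharp$ on the nose. Since the signed count of $Z$ was shown above to equal the off-diagonal coefficient of $\partial^2$, namely $\partial_2 K_\sharp+K_\sharp\partial_1$, this gives the chain homotopy equation $\partial_2 K_\sharp+K_\sharp\partial_1=\Phi'_\sharp-\Phi_\sharp$.

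I expect the main obstacle to be exactly this orientation bookkeeping: verifying that the extra boundary points of $\overline{M}^\ca_1(\widetilde{x}_1,\widetilde{x}_2)$ carry precisely the orientations reproducing $\Phi'_\sharp-\Phi_\sharp$, with the correct overall sign relative to $\partial_2 K_\sharp+K_\sharp\partial_1$. This entails combining the signs in \eqref{eqn:compactopy1}--\eqref{eqn:compactopy3}, the ``derivative first'' and fiber-product conventions of \S\ref{sec:conventions} together with Convention~\ref{convention:constraints} (which control how the cyclic fiber products and point constraints in the cascade moduli spaces are oriented), and the $\tau$-twist of Lemma~\ref{lem:tauacm}; it must also be carried out for the point-constrained moduli spaces governed by \eqref{eqn:compactopy2-}, \eqref{eqn:compactopy2+}, and \eqref{eqn:compactopy3}, paralleling the constrained cases in the proof of Proposition~\ref{prop:cascadekey}(b). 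Once these signs are pinned down, the rest is a routine reprise of the arguments already carried out for Propositions~\ref{prop:cascadekey}(b) and~\ref{prop:functoriality}.
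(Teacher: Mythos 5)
Your proposal is correct and takes essentially the same route as the paper: the paper likewise computes the lower-left block of $\partial^2$ for $A_1\sqcup_K A_2$ (whose diagonal blocks are the unconjugated $\partial_1,\partial_2$, exactly because the signs in \eqref{eqn:compactopy1}--\eqref{eqn:compactopy3} match \eqref{eqn:compactification1}--\eqref{eqn:compactification3}), attributes the failure of $\partial^2=0$ to the extra $\Phi$ and $\Phi'$ boundary terms of the compactified $K$-factors, and then reconciles these with the definition of $\Phi_\sharp$, $\Phi'_\sharp$ via the $\underline{A}_1$-versus-$A_1$ and $\tau$ adjustments. The orientation bookkeeping you defer is precisely the paper's short case analysis on $\widetilde{x}_+=\widehat{x}_+$ versus $\widecheck{x}_+$ (and, in the latter case, on whether the first factor lies in $\Phi$ or $M^1$), in which the flipped signs of the $p_1$-constrained equations \eqref{eqn:compactopy2+} and \eqref{eqn:compactopy3} are compensated by the conjugation and $\tau$, so it comes out to $\Phi'_\sharp-\Phi_\sharp$ as you predict.
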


\begin{proof}
Since we do not have parts (b)--(d) of the Compactness axiom, we do not have $\partial^2=0$. Instead, taking note of the extra terms involving $\Phi$ and $\Phi'$ in equations \eqref{eqn:compactopy1}, \eqref{eqn:compactopy2-}, \eqref{eqn:compactopy2+}, and \eqref{eqn:compactopy3}, and comparing with the definition of $\Phi_\sharp$ and $\Phi'_\sharp$, we find that
\begin{equation}
\label{eqn:Kboundary}
\partial^2 = \begin{pmatrix} 0 & 0 \\ \Phi'_\sharp - \Phi_\sharp & 0 \end{pmatrix}.
\end{equation}

To explain the signs in this equation, consider a cascade contributing to the coefficient $\langle\partial^2\widetilde{x}_+,\widetilde{x}_-\rangle$, coming from a $\Phi$ boundary term in equation \eqref{eqn:compactopy1}, \eqref{eqn:compactopy2-}, \eqref{eqn:compactopy2+}, or \eqref{eqn:compactopy3}. At first glance, these equations suggest that this cascade should count with the same sign as in $\Phi_\sharp$ when $\widetilde{x}_+=\widehat{x}$ (which would disagree with \eqref{eqn:Kboundary}), and with the opposite sign as in $\Phi_\sharp$ when $\widetilde{x}_+=\widecheck{x}_-$ (which would agree with \eqref{eqn:Kboundary}). However we have to make two adjustments in order to compare with the signs in \eqref{eqn:compactopy1}--\eqref{eqn:compactopy3} with the signs in the definition of $\Phi_\sharp$: namely, we have to replace $A_1$ by $\underline{A}_1$ and insert $\tau$.

If $\widetilde{x}_+=\widehat{x}_+$, then replacing $A_1$ by $\underline{A}_1$ does not affect the sign, because any $M^1$ factors in the cascade are in zero-dimensional moduli spaces. However the $\tau$ factor in the definition of $\Phi_\sharp$ does switch the sign.

On the other hand, if $\widetilde{x}_+=\widecheck{x}_-$, then there are two cases to consider. If the first factor in the cascade is in $\Phi$, then replacing $A_1$ by $\underline{A}_1$ does not affect the sign, and inserting $\tau$ does not affect the sign either. If the first factor in the cascade is in $M^1$, then it lives in a one-dimensional moduli space, while all other factors in $M^1$ live in zero-dimensional moduli spaces. Thus replacing $A_1$ by $\underline{A}_1$ switches the sign; and inserting $\tau$ also switches the sign. This completes the proof of \eqref{eqn:Kboundary}.

Computing $\partial^2$ using \eqref{eqn:defKsharp} and comparing with \eqref{eqn:Kboundary}, we obtain
\[
\partial_2K_\sharp + K_\sharp\underline{\partial}_1 = \Phi'_\sharp - \Phi_\sharp.
\]
\end{proof}

\subsection{Independence of the choice of base points}
\label{sec:ibp}

We now show that if $A$ is a Morse-Bott system, then the cascade homology $H_*^\ca(A,\mc{P})$ does not depend on the choice of base points $\mc{P}$, and so can be denoted by $H_*^\ca(A)$. In addition, if $\Phi$ is a morphism of Morse-Bott systems from $A_1$ to $A_2$, then the induced map on cascade homology $\Phi_*^{\mc{P}_2,\mc{P}_1}:H_*^\ca(A_1,\mc{P}_1) \to H_*^\ca(A_2,\mc{P}_2)$ gives a well-defined map $\Phi_*:H_*^\ca(A_1)\to H_*^\ca(A_2)$. More precisely:

\begin{proposition}
\label{prop:basepoint}
Let $A$ be a Morse-Bott system, and let $\mc{P},\mc{P}'$ be two choices of base points as needed to define the cascade chain complex. Then there is a canonical isomorphism
\begin{equation}
\label{eqn:phipp'}
\phi_{\mc{P}',\mc{P}}: H_*^\ca(A,\mc{P}) \stackrel{\simeq}{\longrightarrow} H_*^\ca(A,\mc{P}')
\end{equation}
with the following properties:
\begin{description}
\item{(a)}
$\phi_{\mc{P},\mc{P}} = \op{id}_{H_*^\ca(A,\mc{P})}$.
\item{(b)}
If $\mc{P}''$ is a third choice of base points, then
\[
\phi_{\mc{P}'',\mc{P}} = \phi_{\mc{P}'',\mc{P}'}\circ \phi_{\mc{P}',\mc{P}}: H_*^\ca(A,\mc{P}) \longrightarrow H_*^\ca(A,\mc{P}'').
\]
\item{(c)}
If $\Phi$ is a morphism of Morse-Bott systems from $A_1$ and $A_2$, if $\mc{P}_1$ and $\mc{P}_1'$ are choices of base points for $A_1$, and if $\mc{P}_2$ and $\mc{P}_2'$ are choices of base points for $A_2$, then
the following diagram commutes:
\[
\begin{CD}
H_*^\ca(A_1,\mc{P}_1) @>{\phi_{\mc{P}_1',\mc{P}_1}}>> H_*^\ca(A_1,\mc{P}_1')\\
@V{\Phi_*^{\mc{P}_2,\mc{P}_1}}VV @VV{\Phi_*^{\mc{P}'_2,\mc{P}'_1}}V\\
H_*^\ca(A_2,\mc{P}_2) @>{\phi_{\mc{P}_2',\mc{P}_2}}>> H_*^\ca(A_2,\mc{P}_2')\\ 
\end{CD}
\]
\end{description}
\end{proposition}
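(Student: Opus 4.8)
The plan is to define $\phi_{\mc{P}',\mc{P}}$ as the map on cascade homology induced by the identity morphism $I_A$ of $A$ (Example~\ref{ex:identity}, Lemma~\ref{lem:identity}), with the base points $\mc{P}$ used on the source and $\mc{P}'$ on the target: that is, $\phi_{\mc{P}',\mc{P}}:=(I_A)_*^{\mc{P}',\mc{P}}$ in the notation of Definition~\ref{def:phistar}, for $(\mc{P},\mc{P}')$ chosen generically enough that $(I_A)_\sharp$ is defined. (The regularity conditions that fail when $\mc{P}=\mc{P}'$ concern only moduli spaces that are empty anyway; for a pair that is not jointly generic one interpolates through a third, auxiliary choice using part (b), and the resulting bookkeeping is routine.) Granting (a)--(c), taking $\mc{P}''=\mc{P}$ in (b) together with (a) shows that $\phi_{\mc{P}',\mc{P}}$ is invertible with inverse $\phi_{\mc{P},\mc{P}'}$, hence is an isomorphism. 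So it remains to prove (a), (b), and (c).

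For (a), I would prove the sharper chain-level fact that $(I_A)_\sharp$, computed with the \emph{same} base points $\mc{P}$ on both sides, is literally the identity chain map. Recall that $(I_A)_\sharp$ counts $0$-dimensional hybrid cascades in $\underline{A}\sqcup_{I_A}A$ from a first-copy generator to a second-copy generator, post-composed with the twist $\tau$; each such cascade has exactly one ``crossing'' factor, which lies in some $(I_A)_d$. Now $(I_A)_0$ is always empty, $(I_A)_d(x,x)$ is empty for $d\neq1$ while $(I_A)_1(x,x)=S(x)$ with $e_\pm=\op{id}$, and for $x_1\neq x_2$ the space $(I_A)_d(x_1,x_2)=\R\times M_{d-1}(x_1,x_2)$ becomes empty after imposing a point constraint at a generic point (a generic point misses the finite image of an evaluation map on a rigid moduli space). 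Hence the only surviving cascades are the one-step cascades $u_1\in(I_A)_1(x,x)=S(x)$ carrying the constraint $e_+(u_1)=p_x$ when the source is $\widecheck{x}$ and/or $e_-(u_1)=p_x$ when the target is $\widehat{x}$, which force $u_1=p_x$. The essential point is that using \emph{equal} base points on the two copies eliminates every longer cascade: such a cascade would contain two distinct factors whose evaluation maps are both forced onto the common base point $p_z\in S(z)$, contradicting the distinctness built into the cyclic fiber product. A sign check using the orientation conventions of \S\ref{sec:conventions}, Convention~\ref{convention:constraints}, and the twist $\tau$ then gives $(I_A)_\sharp(\widecheck{x})=\widecheck{x}$ and $(I_A)_\sharp(\widehat{x})=\widehat{x}$, so $\phi_{\mc{P},\mc{P}}=\op{id}$.

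For (b) and (c), I would rerun the argument of Proposition~\ref{prop:functoriality}, modified so that it does not require the identity morphism to be \emph{composable} with $I_A$, respectively with $\Phi$ --- which, as Remark~\ref{rem:identity} warns, it need not be under the minimal axioms. For (b), given three choices $\mc{P},\mc{P}',\mc{P}''$ one forms the ``almost'' Morse--Bott system $A^{(\mc{P})}\sqcup_{\underline{I_A}}\underline{A^{(\mc{P}')}}\sqcup_{I_A}A^{(\mc{P}'')}$ (three copies of $A$ carrying the three sets of base points) exactly as in that proof; its cascade ``differential'' $\partial$ is well defined since Proposition~\ref{prop:cascadekey}(a) still applies. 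The structural fact that makes this work despite the failure of composability is that $(I_A)_d$ is always $S(x)$ or $\R\times M_{d-1}$, a thickening of a codimension-zero subset of $A$'s moduli spaces: consequently every fiber product appearing in the relevant compactifications reduces to a \emph{double} fiber product of $A$'s moduli spaces --- a genuine triple fiber product never arises --- and such double fiber products are transverse by the Fiber Product Transversality axiom of $A$, while $(I_A)_0=\emptyset$ annihilates the would-be dangerous terms. Computing the failure of $\partial^2=0$ on the block from $C_*^\ca(A,\mc{P})$ to $C_*^\ca(A,\mc{P}'')$ as in Proposition~\ref{prop:functoriality}, the corresponding block of $\partial$ is a chain homotopy between $(I_A)_\sharp^{\mc{P}'',\mc{P}'}\circ(I_A)_\sharp^{\mc{P}',\mc{P}}$ and the correction term; the point is that because of the $S(x)$-or-$\R\times M_{d-1}$ shape of $(I_A)_d$ --- in particular using $S(x)\times_{S(x)}(\R\times M_{d-1})=\R\times M_{d-1}$, cf.\ \eqref{eqn:fpid} --- the correction term assembles, up to an overall sign, into $(I_A)_\sharp^{\mc{P}'',\mc{P}}$, with the two crossings merged, rather than a composite-morphism count. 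This gives (b). Part (c) follows the same pattern, applied to the ``almost'' systems $A_1^{(\mc{P}_1)}\sqcup_{\underline{\Phi}}\underline{A_2^{(\mc{P}_2)}}\sqcup_{I_{A_2}}A_2^{(\mc{P}_2')}$ and $A_1^{(\mc{P}_1)}\sqcup_{\underline{I_{A_1}}}\underline{A_1^{(\mc{P}_1')}}\sqcup_{\Phi}A_2^{(\mc{P}_2')}$: the first yields $\phi_{\mc{P}_2',\mc{P}_2}\circ\Phi_\sharp^{\mc{P}_2,\mc{P}_1}\simeq\Phi_\sharp^{\mc{P}_2',\mc{P}_1}$ (the $I_{A_2}$-crossing merging into the $\Phi$-crossing via $S(x_2)\times_{S(x_2)}\Phi=\Phi$), the second yields $\Phi_\sharp^{\mc{P}_2',\mc{P}_1'}\circ\phi_{\mc{P}_1',\mc{P}_1}\simeq\Phi_\sharp^{\mc{P}_2',\mc{P}_1}$, and comparing the two proves that the square in (c) commutes.

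I expect the main obstacle to be exactly this adaptation: checking in detail that the $\R$-thickened shape of $I_A$ keeps every fiber product at the ``double'' level covered by the axioms --- so that the ``almost'' Morse--Bott systems above genuinely satisfy all the axioms except for the expected controlled failure of compactness --- and, more delicately, verifying that the correction term in the $\partial^2$ computation reproduces $(I_A)_\sharp$ (respectively $\Phi_\sharp$) with precisely the right base points, signs, and $\tau$-twists, rather than a composite-morphism count. A secondary technical nuisance is the genericity bookkeeping that makes $\phi_{\mc{P}',\mc{P}}$ well defined for every pair of generic base points, not merely jointly generic ones.
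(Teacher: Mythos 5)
Your overall architecture (define $\phi_{\mc{P}',\mc{P}}$ as the map induced by the identity morphism for generic pairs, and prove (b) and (c) by rerunning the functoriality argument with the identity morphism inserted, since genuine composability with the identity may fail) is exactly the paper's route, and your sketch of (b)--(c) corresponds to its Lemmas \ref{lem:bp} and \ref{lem:phiple}. The genuine gap is in your treatment of (a). The chain map $(I_A)_\sharp$ ``computed with the same base points $\mc{P}$ on both sides'' is not defined: the hybrid system $\underline{A}\sqcup_{I_A}A$ contains, for each $x$, the moduli space $(I_A)_1(x,x)=S(x)$ with $e_+=e_-=\op{id}$, so $e_+\times e_-$ is the diagonal map $S(x)\to S(x)\times S(x)$, and the required condition that $(p_x,p'_x)$ be a regular value of $e_+\times e_-$ holds precisely when $p_x\neq p'_x$. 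Your parenthetical claim that the regularity failures at $\mc{P}=\mc{P}'$ ``concern only moduli spaces that are empty anyway'' is therefore false --- the offending space is $S(x)$ itself. Worse, even if one ignores the transversality failure and counts naively, the answer is not the identity: the one-step cascades you describe are excluded by the starred conditions in \S\ref{sec:cmsd}. For instance, the $\widecheck{x}\to\widecheck{x}$ coefficient counts $u\in M_1^*(x,p_x,x)$, i.e.\ $u\in S(x)$ with $e_+(u)=p_x$ and $e_-(u)\neq p'_x$; when $p'_x=p_x$ the forced solution $u=p_x$ violates the second condition, and (as you correctly observe for other reasons) no longer cascade can contribute either, so the diagonal entries come out $0$, not $1$. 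This also undermines your derivation of the isomorphism property, which you deduce from (a) together with (b).

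The paper avoids this by never evaluating the identity-morphism count at a non-generic pair. It first proves invertibility of $\phi_{\mc{P}',\mc{P}}$ for generic pairs directly at the chain level (Lemma \ref{lem:phiso}): after matching trivializations by parallel transport from $p_x$ to $p'_x$, the chain map has the form $I+B$ where $\langle B\widetilde{x},\widetilde{y}\rangle\neq 0$ only if $x\neq y$ and some $M_d(x,y)\neq\emptyset$, so $\sum_k(-1)^kB^k$ is a well-defined inverse by the Finiteness axiom. It then defines $\phi_{\mc{P}',\mc{P}}$ for an arbitrary pair (in particular $\mc{P}'=\mc{P}$) by interpolating through a generic auxiliary $\mc{P}''$, and deduces (a) from the cocycle relation of Lemma \ref{lem:phiple} applied twice, combined with invertibility --- no chain-level identity at equal base points is ever asserted. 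To repair your proposal, replace your direct computation in (a) by this indirect argument (or by some substitute for Lemma \ref{lem:phiso}); your (b) and (c), which are the same adaptation of Proposition \ref{prop:functoriality} that the paper sketches, can then stand.
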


\begin{proof}
To define the map \eqref{eqn:phipp'}, suppose that the pair $(\mc{P},\mc{P}')$ is generic. Then by \S\ref{sec:inducedmap}, the identity morphism of $A$ induces a map
\[
\op{id}_*^{\mc{P}',\mc{P}} : H_*^\ca(A,\mc{P})\longrightarrow H_*^\ca(A,\mc{P}'),
\]
and we define this to be $\phi_{\mc{P}',\mc{P}}$.

\begin{lemma}
\label{lem:phiso}
The map $\phi_{\mc{P}',\mc{P}}$ defined above for generic pairs $(\mc{P},\mc{P}')$ is an isomorphism.
\end{lemma}

\begin{proof}
Let $C_*$ denote the free $\Z$-module with generators $\widecheck{x}$ and $\widehat{x}$ for each $x\in X$. Choose generators of $\mc{O}_x(p_x)$ and $\mc{O}_x(p_x')$ for each $x\in X$, in order to identify both cascade chain modules $C_*^\ca(A,\mc{P})$ and $C_*^\ca(A,\mc{P}')$ with $C_*$. For each $x\in X$, choose these generators to agree under the isomorphism $\mc{O}_x(p_x)\stackrel{\simeq}{\to}\mc{O}_x(p_x')$ given by parallel transport along a positively oriented embedded arc on $S(x)$ from $p_x$ to $p_x'$. Then by the construction in \S\ref{sec:inducedmap}, $\phi_{\mc{P}',\mc{P}}$ is induced by a chain map of the form
\[
I + B : C_* \longrightarrow C_*
\]
where $I$ denotes the identity on $C_*$, and $\langle B\widetilde{x},\widetilde{y}\rangle\neq 0$ only if $x\neq y$ and $M_d(x,y)\neq\emptyset$ for some $d\in\{0,1\}$. Now
\[
\sum_{k=0}^\infty(-1)^kB^k: C_*\longrightarrow C_*
\]
is a well-defined linear map, by the Finiteness axiom for a Morse-Bott system, and it is inverse to $I+B$. Thus $I+B$ is an isomorphism of chain complexes, and so it induces an isomorphism on homology.
\end{proof}

The above definition of $\phi_{\mc{P}',\mc{P}}$ only works for generic pairs $(\mc{P},\mc{P}')$; in particular it does not work when $\mc{P}=\mc{P}'$. To extend the definition to arbitrary pairs $(\mc{P},\mc{P}')$ for which both cascade chain complexes are defined, we use the following lemma:

\begin{lemma}
\label{lem:bp}
Let $\Phi$ be a morphism of Morse-Bott systems from $A_1$ to $A_2$, and let $\mc{P}_1$ and $\mc{P}_2'$ be generic choices of base points for $A_1$ and $A_2$, so that the cascade chain complexes and the map $\Phi_*^{\mc{P}_2',\mc{P}_1}$ are defined. Then:
\begin{description}
\item{(a)}
If $\mc{P}_1'$ is a generic choice of base points for $A_1$, so that $\phi_{\mc{P}_1',\mc{P}_1}$ and $\Phi_*^{\mc{P}_2',\mc{P}_1'}$ are defined, then
\[
\Phi_*^{\mc{P}_2',\mc{P}_1} = \Phi_*^{\mc{P}_2',\mc{P}_1'}\circ\phi_{\mc{P}_1',\mc{P}_1}.
\]
\item{(b)}
If $\mc{P}_2$ is a generic choice of base points for $A_2$, so that $\Phi_*^{\mc{P}_2,\mc{P}_1}$ and $\phi_{\mc{P}_2',\mc{P}_2}$ are defined, then
\[
\Phi_*^{\mc{P}_2',\mc{P}_1} = \phi_{\mc{P}_2',\mc{P}_2}\circ \Phi_*^{\mc{P}_2,\mc{P}_1}.
\]
\end{description}
\end{lemma}

\begin{proof}
The idea is to apply the functoriality of Proposition~\ref{prop:functoriality} to the composition of $\Phi$ with the identity morphism for $A_1$ or $A_2$. Unfortunately we cannot do this directly, because as discussed in Remark~\ref{rem:identity}, $\Phi$ might not be composable with the identity; and even when it is, the composition of $\Phi$ with the identity is different from $\Phi$ (although only inconsequentially). However the proof of Proposition~\ref{prop:functoriality} still goes through in this case with minor modifications. We omit the details.
\end{proof}

\begin{lemma}
\label{lem:phiple}
Let $(\mc{P}_1,\mc{P}_2,\mc{P}_3)$ be a generic triple of choices of base points for the Morse-Bott system $A$. (If any pair of these choices is generic, then the third choice can be made generically.) Then
\[
\phi_{\mc{P}_3,\mc{P}_1} = \phi_{\mc{P}_3,\mc{P}_2}\circ \phi_{\mc{P}_2,\mc{P}_1}.
\]
\end{lemma}

\begin{proof}
This is a special case of Lemma~\ref{lem:bp} in which $\Phi:A\to A$ is the identity morphism.
\end{proof}

Continuing the proof of Proposition~\ref{prop:basepoint}, if $A$ is a Morse-Bott system and if $\mc{P},\mc{P}'$ are any choices of base points for which the cascade chain complexes are defined, then we can define $\phi_{\mc{P}',\mc{P}}$ by generically choosing a third set of base points $\mc{P}''$ and setting
\[
\phi_{\mc{P}',\mc{P}} = \phi_{\mc{P}',\mc{P}''}\circ \phi_{\mc{P}'',\mc{P}}.
\]
It follows by repeatedly applying Lemma~\ref{lem:phiple} that $\phi_{\mc{P}',\mc{P}}$ does not depend on the choice of $\mc{P}''$ and satisfies property (b) in Proposition~\ref{prop:basepoint}. It follows from Lemma~\ref{lem:phiso} that $\phi_{\mc{P}',\mc{P}}$ is an isomorphism.

To prove part (a) of Proposition~\ref{prop:basepoint}, by definition we have
\[
\phi_{\mc{P},\mc{P}} = \phi_{\mc{P},\mc{P}'}\circ\phi_{\mc{P}',\mc{P}}
\]
where $\mc{P}'$ is generic. To prove that $\phi_{\mc{P},\mc{P}}$ is the identity, by Lemma~\ref{lem:phiso} it is enough to show that
\[
\phi_{\mc{P},\mc{P}'}\circ\phi_{\mc{P}',\mc{P}}\circ\phi_{\mc{P},\mc{P}''} = \phi_{\mc{P},\mc{P}''}
\]
where $\mc{P}''$ is generic. This last equation follows by applying Lemma~\ref{lem:phiple} twice.

Part (c) of Proposition~\ref{prop:basepoint} now follows from Lemma~\ref{lem:bp}.
\end{proof}

\subsection{Proof of the main theorem}
\label{sec:conclusion}

To conclude, we now review how the above results prove all of the points in the main theorem.

\begin{proof}[Proof of Theorem~\ref{thm:main}.]
Part (a) follows from Proposition~\ref{prop:basepoint}(a),(b).

Part (b.i) follows from Proposition~\ref{prop:basepoint}(c).

Part (b.ii) holds by definition, because we are using the maps induced by the identity morphism to identify the cascade homologies for different choices of base points with each other.

Part (b.iii) follows from Proposition~\ref{prop:functoriality}.

Part (b.iv) follows from Proposition~\ref{prop:homotopyinvariance}.
\end{proof}

\noindent \textsc{Michael Hutchings \\  University of California at Berkeley}\\
{\em email: }\texttt{hutching@math.berkeley.edu}\\

\noindent \textsc{Jo Nelson \\  Rice University}\\
{\em email: }\texttt{jo.nelson@rice.edu}\\

\end{document}